\title[Global well-posedness for active scalars equation]{Global well-posedness of slightly supercritical active scalar equations}
\date{\today}
\author{Michael Dabkowski}
\address{Department of Mathematics, University of Toronto}
\email{michael.dabkowski@utoronto.ca}
\author{Alexander Kiselev}
\address{Department of Mathematics, University of Wisconsin}
\email{kiselev@math.wisc.edu}
\author{Luis Silvestre}
\address{Department of Mathematics, The University of Chicago}
\email{\tt luis@math.uchicago.edu}
\author{Vlad Vicol}
\address{Department of Mathematics, The University of Chicago}
\email{\tt vicol@math.uchicago.edu}
\theoremstyle{plain}
\newtheorem{theorem}{Theorem}[section]
\newtheorem{definition}[theorem]{Definition}
\newtheorem{lemma}[theorem]{Lemma}
\newtheorem{proposition}[theorem]{Proposition}
\theoremstyle{definition}
\newtheorem{remark}[theorem]{Remark}
\def\tilde{\widetilde}
\numberwithin{equation}{section}
\renewcommand\hat{\widehat}
\newcommand\blu[1]{{#1}}
\def\RR{{\mathbb R}}
\def\TT{{\mathbb T}}
\def\DD{\mathcal D}
\def\eps{\varepsilon}
\def\LL{\Lambda}
\def\omb{\omega_B}
\def\EL{\mathcal L}
\def\zz{\zeta}
\def\just{slightly\ }
\begin{document}


\begin{abstract}
The paper is devoted to the study of \just supercritical active scalars with nonlocal diffusion. We prove global regularity for the surface quasi-geostrophic (SQG) and Burgers equations, when the diffusion term is supercritical by a symbol with roughly logarithmic behavior at infinity. We show that the result is sharp for the Burgers equation. We also prove global regularity for a \just supercritical two-dimensional Euler equation. Our main tool is a nonlocal maximum principle which controls a certain modulus of continuity of the solutions.
\end{abstract}

\subjclass[2000]{35Q35,76U05}
\keywords{Surface quasi-geostrophic equation, Burgers equation, supercritical, global regularity, finite time blow-up, nonlocal maximum principle, nonlocal dissipation.}

\maketitle

\section{Introduction} \label{sec:intro}

Active scalars play an important role in fluid mechanics. An active scalar equation is given by
\begin{equation}\label{as1}
\partial_t \theta + (u \cdot \nabla)\theta + \EL \theta = 0, \,\,\,\theta(x,0)=\theta_0(x),
\end{equation}
where $\EL$ is typically some dissipative operator, such as fractional dissipation, and $u$ is the flow velocity that is determined by $\theta.$ A common setting is either on $\RR^d$ or $\TT^d.$
Active scalar \blu{equations} are nonlinear, and most active scalars of interest are nonlocal. This makes the analysis of these equations challenging. The best known active scalar
equations are \blu{the} 2D Euler equation (for which $u=\nabla^\perp (-\Delta)^{-1}\theta$), \blu{the} surface quasi-geostrophic (SQG) equation ($d=2,$ $u=\nabla^\perp (-\Delta)^{-1}\theta$), and one-dimensional
Burgers equation ($u=\theta$). The 2D Euler and Burgers equations are classical \blu{in fluid mechanics}, while the SQG equation was first considered in the mathematical literature by Constantin, Majda and Tabak \cite{CMT}, and since then
has attracted significant attention, in part due to certain similarities with three dimensional Euler and Navier-Stokes equations.

Observe that for SQG and Burgers \blu{the drift velocity} $u$ and \blu{the advected scalar} $\theta$ are of the same order of regularity, while for 2D Euler $u$ is more regular by a derivative. The 2D Euler equation has global regular
solutions, and can be thought of as critical case. For the Burgers and SQG equations, fractional dissipation $\EL = \LL^{\alpha},$ where $\LL=(-\Delta)^{1/2}$ is the Zygmund operator, have been often considered. Both of these equations possess $L^\infty$ maximum principle \cite{R,CC}, and this makes $\alpha =1$ critical \blu{with respect to the natural scaling of the equations}. It has been known for a while that in the subcritical case $\alpha >1$ global regular solutions exist for sufficiently smooth initial data (see \cite{R} for SQG equation; \blu{the} analysis for Burgers is very similar). The critical case $\alpha=1$ has been especially interesting for the SQG equation since it
is well motivated physically, with $\Lambda \theta$ term modeling so called Ekman boundary layer pumping effect (see e.g. \cite{Pedl}). The global regularity in the critical case has been settled
independently by Kiselev-Nazarov-Volberg \cite{KNV} and Caffarelli-Vasseur \cite{CaV}. A third proof of the same
result was provided by Kiselev and Nazarov in \cite{KN2}, and a fourth \blu{recently by Constantin and Vicol} in \cite{CoVi}. All these proofs are quite different. The method of \cite{CaV} is inspired by DeGiorgi iterative estimates, while the
duality approach of \cite{KN2} uses \blu{an} appropriate set of test functions and estimates on their evolution. The proof in \cite{CoVi} takes advantage of \blu{a} new nonlinear maximum principle, \blu{which gives a nonlinear bound on a linear operator}. The method of \cite{KNV}, on the other hand, is based on a technique which may be called a nonlocal maximum principle. The idea is to prove that the evolution \eqref{as1}
preserves a certain modulus of continuity $\omega$ of the solution. In the critical SQG case, the control is strong enough to give a uniform bound on $\|\nabla
\theta\|_{L^\infty}$, which is sufficient for
global regularity.

In the supercritical case, until recently the only results available (for large initial data) have been on conditional regularity and finite time regularization of solutions. It was shown by Constantin and Wu~\cite{CW} that if the solution is $C^{\delta}$ with $\delta > 1-\alpha$, then it is smooth (see also Silvestre~\cite{S2} for drift velocity that is not divergence free). Dong and Pavlovic~\cite{DP} later improved this result to $\delta = 1-\alpha$. Finite time regularization has been proved by Silvestre \cite{S} for $\alpha$
sufficiently close to $1$, and for the whole dissipation range $0<\alpha<1$ by Dabkowski \cite{D} (with an alternative proof of the latter result given in \cite{K2}). The issue of global
regularity in the case $\alpha\in(0,1)$ remains an outstanding open problem. Recently, a small advance into the supercritical regime was made in \cite{DKV}, where the SQG equation with velocity
given by $u = \nabla^\perp \Lambda^{-1}m(\Lambda)\theta$ was considered. Here $m$ is a Fourier multiplier which \blu{may} grow at infinity at any rate slower
than double logarithm. The method of \cite{DKV} was based on the technique of \cite{KNV}. The main issue is that even with very slow growth of $m,$ the equation loses scaling, which plays an
important role in every proof of regularity for the critical case. The work \cite{DKV} was partly inspired by the \just supercritical Navier-Stokes regularity result of Tao \cite{Tao}, and
partly by the recent work of \cite{CCW,CCW2} on generalized Euler and SQG models.

In this paper, we \blu{analyze} a \just supercritical SQG and Burgers equations. As opposed to \cite{DKV}, we keep the velocity definition the same as for classical SQG and Burgers, and \blu{instead treat supercritical diffusion}. We also consider nonlocal diffusion terms more general than fractional Laplacian, including cases where the $L^\infty$ maximum principle does not hold. We show, roughly,
that symbols supercritical by a logarithm or less lead to global regular solutions for both equations. Our main technique is the control of appropriate family of moduli of continuity of the
solutions. \blu{For the Burgers equation, when the conditions we impose on the diffusion in order to obtain global regularity are not satisfied, then we prove that some smooth initial data leads to finite time blow up. In this respect, our well posedness result is sharp.} For the SQG, the global regularity proof is more sophisticated than for the Burgers equation. The upgrade from the double logarithm supercriticality of \cite{DKV} to
logarithmic one is made possible by exploiting the structure of nonlinearity, in particular the $\nabla^\perp$ in $u = \nabla^\perp \Lambda^{-1}\theta.$ This idea is based on \cite{K2}, where
this structure was exploited to prove finite time regularization for power supercritical SQG. We note that Xue and Zheng \cite{XZ} observed a similar improvement from $\log\log$ to $\log$ in the context of supercritical velocity.

We also consider \just supercritical 2D Euler equation, and generalize the results of \cite{CCW} on global regularity of solutions.

Below, we state main results proved in the paper. In Section~\ref{sec:prelim:MOC}, we provide some basic background results on the nonlocal maximum principles.
Section~\ref{sec:SQG:global} is devoted to proof of global regularity for \just supercritical SQG with nonlocal diffusions. The Burgers case is handled in Section~\ref{sec:Burg:global}.
In Section~\ref{app:Fourier}, we consider the case of dissipation given by Fourier multipliers. Some natural multipliers can lead to non-positive convolution kernels for the corresponding
nonlocal diffusion, and we generalize our technique to this case. Section~\ref{sec:Euler} is devoted to \just supercritical 2D Euler equation.

To state our main results, we need to introduce some notation.

\subsection{Assumptions on $m$} \label{sec:m:assume}
Let $m \colon (0,\infty) \to [0,\infty)$ be a non-increasing smooth function which is singular at the origin, i.e. $\lim_{r \to 0} m(r) = \infty$, and satisfies the following conditions:
\begin{itemize}
\item[(i)] there exists a sufficiently large positive constant $C_0>0$ such that
\begin{align}
r m(r) \leq C_0 \mbox{ for all } r\in(0,r_0) \label{eq:m:critical}
\end{align}
for some $r_0>0$. The above condition is natural in the present context, since otherwise the dissipative operator defined below is {subcritical}, which is not the purpose of this paper.
\item[(ii)]  there exits some $\alpha > 0$ such that
\begin{align}
r^{\alpha} m(r) \mbox{ is non-increasing.} \label{eq:m:decay}
\end{align}
The above assumption is slightly stronger than just having $m(r)$ be non-increasing. 
\end{itemize}
Throughout this paper we also denote  by $m$ the radially symmetric function $m \colon \RR^{d}\setminus \{0\} \to (0,\infty)$ such that $m(y) = m(|y|)$ for each $y \in \RR^d \setminus \{0\}$.
Note that the above conditions allow for $m$ to be identically zero on the complement of a ball.

The examples of functions $m$ which are relevant to this paper are those that are less singular than $r^{-1}$ near $r=0$. These functions yield dissipative nonlocal operators
(cf.~\eqref{eq:L:def} below) that are less smoothing than $\Lambda$, which makes the corresponding SQG and Burgers equation {\em supercritical}. The main examples we have in mind are
\begin{align}
m(r)  = \frac{1}{r^{a}}\,\,\,{\rm and}\,\,\,m(r) = \frac{1}{r (\log(2/r))^{a}} \label{eq:m:log}
\end{align}
with $0< a \leq 1$, $0<r \leq 1,$ coupled with enough regularity and decay for $r>1$. 
The first class corresponds to power supercticality. The second class, supercritical by a logarithm, is relevant for the global well-posedness results we prove. 
It is not hard to verify that the functions in \eqref{eq:m:log} verify \eqref{eq:m:critical}--\eqref{eq:m:decay} on
$(0,1]$, and that they can be suitably extended on $[1,\infty)$.

\subsection{Dissipative nonlocal operators}  \label{sec:nonlocal:operators}
Associated to any such function $m$ we consider the nonlocal operator
\begin{align}
\EL \theta(x) = \int_{\RR^{d}} \big( \theta(x) - \theta(x+y) \big) \frac{m(y)}{|y|^d} dy \label{eq:L:def}.
\end{align}
Above and throughout the rest of the paper the integral is meant in principal value sense, but we omit the $P.V.$ in front of the integral. For example, when $m(r) = r^{-\alpha} C_{d,\alpha}$
for a suitable normalizing constant $C_{d,\alpha}$, then $\EL = \LL^{\alpha}$. 
The nonlocal operators $\EL$ we consider here are
dissipative  because $m$ is singular at the origin: due to \eqref{eq:m:decay}, we have that $m(r) \geq m(1) r^{-\alpha}$ for some $\alpha>0$ when $r \leq 1$, so that $\EL$ is at least as
dissipative as $\LL^{\alpha}$. It is now evident that when $\lim_{r\to 0} r m(r) = 0$, the corresponding nonlocal operator $\EL$ is less smoothing than $\LL$. We emphasize that for $\theta \in
C^\infty(\TT^{d})$ the P.V. integral in \eqref{eq:L:def} converges only if $m$ is sub-quadratic near $r=0$, i.e.
\begin{align*}
 \int_{0}^{1} r m(r) dr < \infty
\end{align*}
holds. In our case, the above condition is satisfied in view of assumption \eqref{eq:m:critical}. Convergence near infinity is not an issue due to the assumption \eqref{eq:m:decay}.

All results in this paper can be generalized to a more general class of dissipative operators.
Namely, for each function $m$ that satisfies \eqref{eq:m:critical}--\eqref{eq:m:decay}, consider the class of smooth radially symmetric kernels $K \colon \RR^d\setminus\{0\} \to (0,\infty)$
which satisfy
\begin{align}
\frac{m(y)}{\blu{C}\, |y|^d} \leq K(y) \leq \frac{\blu{C}\, m(y)}{|y|^d}
\label{eq:K:cond}
\end{align}
for some constant $C > 0$, and all $y \neq 0$. Associated to each such kernel $K$ we may consider the dissipative nonlocal operator
\begin{align}
\EL \theta(x) = \int_{\RR^{d}} \big( \theta(x) - \theta(x+y) \big) K(y) dy \label{eq:L:general}
\end{align}
which generalizes the definition in \eqref{eq:L:def}. As we will see, such generalization will be useful when working with dissipative operators
generated by Fourier multipliers. Moreover, as we will see later in the paper, conditions on $K$ can be relaxed further.

\subsection{Main results}\label{sec:results}

The generalized dissipative SQG equation reads
\begin{align}
&\partial_{t} \theta  + u \cdot \nabla \theta + \EL \theta = 0 \label{eq:SQG:1}\\ &u = \nabla^{\perp} \Lambda^{-1} \theta \label{eq:SQG:2}
\end{align}
where $\EL$ is as defined in \eqref{eq:L:def}, and $m$ is as described above. The main result of this paper
with respect to the dissipative SQG equation is:

\begin{theorem}[\bf Global regularity for \just supercritical SQG]\label{thm:SQG:global}
 Assume that $\theta_0$ is smooth and periodic, and $m$ satisfies an additional assumption  
\begin{align}
\lim_{\eps \to 0+} \int_{\eps}^{1} m(r) dr  = \infty. \label{eq:m:SQG}
\end{align}
Then there exists a unique $C^\infty$ smooth solution $\theta$ of the initial value problem associated to \eqref{eq:SQG:1}--\eqref{eq:SQG:2}.
\end{theorem}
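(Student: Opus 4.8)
The plan is to follow the nonlocal maximum principle strategy of \cite{KNV}, adapted to the loss of scaling as in \cite{DKV,K2}. The key object is a family of moduli of continuity $\omega = \omega_B$ indexed by a parameter $B$, designed so that: (a) the initial datum $\theta_0$ obeys $\omega_B$ for $B$ large enough (this uses only smoothness and periodicity of $\theta_0$, giving $\|\nabla\theta_0\|_{L^\infty} < \infty$); (b) any function obeying $\omega_B$ has $\|\nabla\theta\|_{L^\infty}$ controlled by $\omega_B'(0)$, which suffices to propagate higher Sobolev norms and hence to upgrade a local solution to a global smooth one; and (c) the modulus $\omega_B$ is preserved by the evolution \eqref{eq:SQG:1}--\eqref{eq:SQG:2}. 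Steps (a) and (b) are by now standard; the heart of the matter is (c). First I would set up the breakthrough scenario: if $\omega_B$ ceases to be obeyed at some first time $t$, there exist $x \neq y$ with $\theta(x,t) - \theta(y,t) = \omega_B(|x-y|)$, and one must show that at such a point
\begin{align*}
\frac{d}{dt}\big( \theta(x,t) - \theta(y,t) \big) < 0,
\end{align*}
a contradiction. This requires bounding the contribution of the drift term $u \cdot \nabla\theta$ from above and showing the dissipative term $\EL\theta$ contributes a strictly negative amount that dominates.

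For the drift term, the standard estimate (see \cite{KNV}) gives, with $\xi = |x-y|$, a bound of the form $\|u\|_{L^\infty} \omega_B'(\xi)$ plus a term coming from the modulus of continuity of $u$ itself. Since $u = \nabla^\perp \Lambda^{-1}\theta$ is of the same order as $\theta$, one controls $\|u\|_{L^\infty}$ via the $L^\infty$ norm of $\theta$ (which may itself grow if the $L^\infty$ maximum principle fails for our general $\EL$, but at worst at a controlled rate that can be absorbed into the choice of $\omega_B$), and one controls the modulus of continuity $\Omega(\xi)$ of $u$ in terms of $\omega_B$ through the Riesz-transform-type bound
\begin{align*}
\Omega(\xi) \lesssim \int_0^\xi \frac{\omega_B(r)}{r}\, dr + \xi \int_\xi^\infty \frac{\omega_B(r)}{r^2}\, dr.
\end{align*}
Here is where the $\nabla^\perp$ structure is exploited following \cite{K2}: rather than feeding $\Omega$ naively into $\Omega(\xi)\omega_B'(\xi)$, one integrates by parts / uses the antisymmetry to gain, extracting an extra factor that is the key to handling merely logarithmic (rather than double-logarithmic) supercriticality. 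The dissipative term is estimated from below by the Constantin--Iyer--Wu / \cite{KNV}-type inequality: when $\theta(x) - \theta(y) = \omega_B(\xi)$,
\begin{align*}
\EL\theta(x) - \EL\theta(y) \geq c \int_0^\infty \frac{\omega_B(\xi) - \frac{1}{2}\omega_B(\xi + 2r) - \frac{1}{2}\big(\omega_B(\xi) - \omega_B(\xi-2r)\big)\mathbf{1}_{r<\xi/2} + \cdots}{r}\, m(r)\, \frac{dr}{r}
\end{align*}
(schematically: a positive multiple of a second-difference-type quantity of $\omega_B$ against the kernel $m(r)/r$). The crucial point is that assumption \eqref{eq:m:SQG}, $\int_\eps^1 m(r)\,dr \to \infty$ as $\eps\to 0$, makes the dissipation at small scales strong enough: near $\xi = 0$ the dissipative gain behaves like $-\omega_B''(\xi) \int \cdots m$, and logarithmic divergence of $\int m$ is exactly the threshold needed to defeat the logarithmically-large drift contribution.

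The remaining work is the construction of an explicit $\omega_B$ for which the competition in (c) actually closes. Following \cite{DKV,K2}, I would take $\omega_B$ to behave like $\xi(A - B\log\xi)$ or a similar concave profile for $\xi$ small (with $\omega_B'(0) < \infty$ and $\omega_B''(0^-) = -\infty$, so that the dissipation term is as negative as possible near the origin), flattening out to a bounded value for $\xi$ large; the parameter $B$ is tuned so that the $\log$-supercritical dissipation furnished by \eqref{eq:m:SQG} beats the $\log$-loss in the drift estimate uniformly in $\xi$. Because the equation has no scaling, one cannot rescale away $B$ or $r_0$, so the estimates in (c) must be carried out with all constants tracked explicitly and the choice of $\omega_B$ must be compatible with \eqref{eq:m:critical}--\eqref{eq:m:decay} simultaneously at small and large scales; patching the small-$\xi$ and large-$\xi$ regimes of $\omega_B$ together so that preservation holds everywhere is the main technical obstacle. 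Finally, once $\|\nabla\theta(\cdot,t)\|_{L^\infty}$ is bounded on $[0,T]$ for every $T$, a standard commutator estimate and Gr\"onwall argument in $H^s$ (together with a short-time existence result for smooth data, which holds since $\EL$ is a nonnegative dissipative operator) give global-in-time $C^\infty$ regularity and uniqueness, completing the proof.
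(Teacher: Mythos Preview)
Your overall architecture is right and matches the paper's: breakthrough scenario, drift-versus-dissipation competition, a family $\{\omega_B\}$ tuned to the data. But you have misidentified where hypothesis \eqref{eq:m:SQG} actually enters, and your account of the $\nabla^\perp$ gain is too vague to be the actual mechanism.

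First, the role of \eqref{eq:m:SQG}. You write that the divergence of $\int_\eps^1 m$ is ``exactly the threshold needed to defeat the logarithmically-large drift contribution'' at small scales. In the paper it plays no role in the drift-versus-dissipation competition. That competition closes using only the standing assumptions \eqref{eq:m:critical}--\eqref{eq:m:decay}: for small $\xi$ the dissipation lower bound is $-C\xi^2 m(\xi)\,\omega_B''(\xi)$ (pointwise $m(\xi)$, not its integral), and one engineers $\omega_B''$ to make this dominate. The hypothesis \eqref{eq:m:SQG} is used elsewhere: the large-$\xi$ slope is set to $\omega_B'(\xi) = \gamma\, m(2\xi)$, so that $\omega_B(a) \gtrsim \gamma\int_{\delta(B)}^a m(2\eta)\,d\eta$, and \eqref{eq:m:SQG} is precisely what makes this diverge as $\delta(B)\to 0$. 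That is what guarantees the family $\{\omega_B\}$ is rich enough to capture any smooth $\theta_0$. Without \eqref{eq:m:SQG} the moduli would have uniformly bounded height and step (a) of your plan would fail, not step (c).

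Second, the $\nabla^\perp$ mechanism. The paper does not ``integrate by parts'' or extract an abstract antisymmetry factor. The concrete device (from \cite{K2}) is a matched splitting on both sides: the tangential component of $u(x)-u(y)$ is bounded by $\tilde\Omega(\xi)+\Omega^\perp(\xi)$ rather than the full $\Omega(\xi)$, and the dissipation is bounded below by $\mathcal D(\xi) + \mathcal D^\perp(\xi)$, where $\Omega^\perp$ and $\mathcal D^\perp$ are integrals over the \emph{same} planar region with comparable integrands. The key pointwise inequality is Lemma~\ref{lemma:perp}: $m(\xi)\,\Omega^\perp(\xi) \le A^2\,\mathcal D^\perp(\xi)$. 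This is exactly what permits the choice $\omega_B'(\xi) = \gamma\, m(2\xi)$ for $\xi > \delta(B)$: then $\omega_B'(\xi)\,\Omega^\perp(\xi) \le \gamma A^2\,\mathcal D^\perp(\xi) \le \mathcal D^\perp(\xi)$ for small $\gamma$, and one is reduced to $\tilde\Omega(\xi)\,\omega_B'(\xi)$ against $\mathcal D(\xi)$, which closes by concavity. Your schematic profile $\xi(A - B\log\xi)$ does not reflect this structure; the actual small-$\xi$ derivative is $\omega_B'(\xi) = B - \tfrac{B^2}{2C_\alpha\kappa}\int_0^\xi \tfrac{3+\ln(\delta(B)/\eta)}{\eta\, m(\eta)}\,d\eta$, designed so that $-\xi^2 m(\xi)\,\omega_B''(\xi)$ exactly cancels the $B^2\xi\big(3+\ln(\delta(B)/\xi)\big)$ coming from $\Omega_B(\xi)\,\omega_B'(\xi)$.

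Two minor points: the drift bound at the breakthrough point is $\Omega(\xi)\,\omega'(\xi)$ (or its refinement $(\tilde\Omega+\Omega^\perp)\,\omega'$), not anything involving $\|u\|_{L^\infty}$; and for the operator $\EL$ in \eqref{eq:L:def} the kernel is nonnegative, so the $L^\infty$ maximum principle does hold and there is no growth to absorb.
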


In analogy, one may consider the generalized dissipative Burgers equation
\begin{align}
&\partial_{t} \theta  - \theta \theta_x + \EL \theta = 0 \label{eq:Burg}
\end{align}
where $\EL$ and $m$ are as before, and $d=1$. Then we prove
\begin{theorem}[\bf Global regularity for fractal Burgers]  \label{thm:Burg:global}
Assume that $\theta_0$ is smooth and periodic, and $m$ is such that \eqref{eq:m:critical}--\eqref{eq:m:decay} hold and
 \begin{align}
\lim_{\eps \to 0+} \int_{\eps}^{1} m(r) dr  = \infty. \label{eq:m:Burg}
\end{align}
Then there exists a unique $C^\infty$ smooth solution $\theta$ of the initial value problem associated to \eqref{eq:Burg}.
\end{theorem}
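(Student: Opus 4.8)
The plan is to reduce global regularity to a time‑uniform bound on a modulus of continuity of $\theta$, and to obtain that bound through a nonlocal maximum principle in the spirit of \cite{KNV,DKV}. \textbf{Step 1 (reduction).} The standard local theory produces, for smooth periodic $\theta_0$, a unique smooth solution on a maximal interval $[0,T^\ast)$, together with the continuation criterion that the solution extends past $T^\ast$ whenever $\int_0^{T^\ast}\|\partial_x\theta(\cdot,t)\|_{L^\infty}\,dt<\infty$; moreover, since the kernel $m(y)/|y|$ is nonnegative, $\EL$ obeys a pointwise maximum principle, whence $\|\theta(\cdot,t)\|_{L^\infty}\le\|\theta_0\|_{L^\infty}=:M$ for all $t$. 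It therefore suffices to bound $\|\partial_x\theta(\cdot,t)\|_{L^\infty}$ on $[0,T^\ast)$ by a constant depending only on $\theta_0$; once this is done, higher regularity and uniqueness follow from routine commutator and energy estimates exploiting that $\EL$ is at least as dissipative as $\LL^{\alpha}$ by \eqref{eq:m:decay}.

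\textbf{Step 2 (the nonlocal maximum principle).} I would construct a modulus of continuity $\omega$ — continuous, increasing, strictly concave, with $\omega(0)=0$ and, crucially, $\omega'(0^+)<\infty$ — such that $\theta_0$ strictly obeys $\omega$ (i.e. $|\theta_0(x)-\theta_0(y)|<\omega(|x-y|)$ for $x\neq y$) and the Burgers evolution preserves obeyance of $\omega$. These two properties yield $\|\partial_x\theta(\cdot,t)\|_{L^\infty}\le\omega'(0^+)$ for all $t<T^\ast$. Preservation is argued by contradiction: if $t_1$ is the first time obeyance is saturated, there exist $x\neq y$ with $\theta(x,t_1)-\theta(y,t_1)=\omega(\xi)$, $\xi=|x-y|$; a standard argument shows the saturation occurs at some $\xi>0$, using that $\omega'(0^+)$ is chosen strictly above $\|\partial_x\theta_0\|_{L^\infty}$, and at such a pair one has $\partial_x\theta(x,t_1)=\partial_x\theta(y,t_1)=\omega'(\xi)$. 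With $u=-\theta$, equation \eqref{eq:Burg} gives
\begin{align*}
\partial_t\bigl(\theta(x,t)-\theta(y,t)\bigr)\big|_{t=t_1}=\omega(\xi)\,\omega'(\xi)-\bigl(\EL\theta(x,t_1)-\EL\theta(y,t_1)\bigr),
\end{align*}
and the dissipation lower bound of Section~\ref{sec:prelim:MOC} furnishes $\EL\theta(x,t_1)-\EL\theta(y,t_1)\ge D(\xi)$, where $D(\xi)$ is an explicit functional of $\omega$ and $m$: a near‑diagonal part bounded below by the concavity of $\omega$ integrated against the kernel $m(z)/z$, plus a far part comparable to $\omega(\xi)\int_\xi^{\operatorname{diam}\TT} m(z)/z\,dz$ (using $\omega\ge 2M$ past the diameter and concavity, with the usual adjustments on $\TT$). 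A contradiction — $\partial_t(\theta(x,t)-\theta(y,t))|_{t_1}<0$ — is reached as soon as
\begin{align*}
D(\xi)>\omega(\xi)\,\omega'(\xi)\qquad\text{for every }\xi\in(0,\operatorname{diam}\TT].
\end{align*}

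\textbf{Step 3 (constructing $\omega$; the main obstacle).} The heart of the proof is producing such an $\omega$, and this is where I expect the difficulty to lie. One looks for $\omega$ essentially linear with slope $A$ near the origin — with $A$ allowed to depend on $\|\partial_x\theta_0\|_{L^\infty}$ and taken large — but carrying just enough concavity on a small interval $(0,\delta]$ that the near‑diagonal contribution to $D$ (an average of $-\omega''$ against $z\,m(z)$) dominates $\omega(\xi)\omega'(\xi)\lesssim A^2\xi$ as $\xi\to 0^+$, and then continued on $[\delta,\operatorname{diam}\TT]$ so as to stay concave, reach $\ge 2M$ at the diameter, and let the far part of $D$ absorb $\omega\omega'$ there. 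The hypothesis $\lim_{\eps\to 0^+}\int_\eps^1 m(r)\,dr=\infty$ in \eqref{eq:m:Burg} is exactly the ``budget'' making this bending compatible with $\omega'(0^+)<\infty$: it is what allows the near‑diagonal dissipation to be rendered superlinear relative to the nonlinear term as $\xi\to 0$. The essential obstruction is that, the equation having no scaling symmetry, one cannot transplant a fixed profile as in the critical case — $\omega$ must be engineered directly against the general, scale‑free kernel $m(z)/z$ — and one must verify that the divergence rate imposed by \eqref{eq:m:Burg} is sharp for the balance $D>\omega\omega'$, the companion finite‑time blow‑up construction confirming that the threshold is not an artifact of the method. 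Everything else — the local theory, the maximum principle, the derivative identities at the touching pair, checking that $\theta_0$ obeys $\omega$ (near $0$ from the slope gap and concavity, in the mid range from $\omega\ge 2M\ge|\theta_0(x)-\theta_0(y)|$, with strictness from an arbitrarily small dilation of $\omega$), and the final regularity bootstrap — is standard once this balance is secured.
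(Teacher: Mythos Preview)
Your overall strategy is correct and matches the paper's: reduce to preserving a modulus of continuity, run the breakthrough-scenario argument, and verify $\omega(\xi)\,\omega'(\xi) < \DD(\xi)$ for all $\xi$. The paper reuses verbatim the family $\{\omb\}_{B\geq 1}$ built for SQG in Section~\ref{sec:moc:construct}, and the verification splits into $\xi\geq\delta(B)$ and $\xi<\delta(B)$ just as you anticipate.

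However, you mislocate where the key hypothesis \eqref{eq:m:Burg} enters, and this matters for actually carrying out Step~3. You say the divergence of $\int_\eps^1 m$ is the ``budget'' permitting enough near-diagonal bending while keeping $\omega'(0^+)$ finite. In the paper's construction it is the other way around. The small-$\xi$ balance $\DD_B(\xi)>\omb(\xi)\omb'(\xi)$ uses only assumptions \eqref{eq:m:critical}--\eqref{eq:m:decay}: from \eqref{eq:D:low} one has $\DD_B(\xi)\gtrsim -\xi^2 m(\xi)\,\omb''(\xi)$, and the explicit choice of $\omb''$ in \eqref{eq:MOC:def:low} (which involves $1/(\xi m(\xi))$, not $\int m$) makes this beat $B^2\xi$. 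The hypothesis \eqref{eq:m:Burg} is used exclusively in the \emph{outer} region: the paper sets $\omb'(\xi)=\gamma m(2\xi)$ for $\xi>\delta(B)$, and the divergence $\int_0^1 m=\infty$ is precisely what makes $\omb(a)\to\infty$ as $B\to\infty$ for any fixed $a>0$, so that every initial datum obeys some $\omb$ (this is \eqref{eq:IC:attained}). If you look for the hypothesis to help near the origin, you will not find the right ansatz.

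A second, smaller point: your description of the far-part dissipation as ``$\omega(\xi)\int_\xi^{\operatorname{diam}\TT} m(z)/z\,dz$ (using $\omega\geq 2M$ past the diameter)'' does not match what is actually provable or used. The paper's lower bound \eqref{eq:D:high} is $\DD_B(\xi)\gtrsim \omb(\xi)\,m(2\xi)$ --- no integral --- and it comes not from $\omega\geq 2M$ but from a sub-doubling estimate $\omb(2\xi)\leq c_\alpha\omb(\xi)$ with $c_\alpha<2$ (see \eqref{eq:concave:high}), itself a consequence of the specific choice $\omb'=\gamma m(2\xi)$ together with \eqref{eq:m:decay}. Matching this against $\omb'(\xi)=\gamma m(2\xi)$ gives the outer inequality immediately for $\gamma$ small. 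The ``$\omega\geq 2M$'' device you invoke would only help once $\omb(\xi)$ is already above $M$, which is not the regime in question.
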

In addition, in the case of the Burgers equation we prove that condition \eqref{eq:m:Burg} is sharp:
\begin{theorem}[\bf Finite time blow-up for fractal Burgers] \label{thm:Burg:blowup}
Assume that $m$ is such that \eqref{eq:m:critical}--\eqref{eq:m:decay} hold, and in addition we have
\begin{align}
r |m'(r)| \leq C m(r) \label{eq:m:assume:Hor}
\end{align}
for $r>0$ and some constant $C\geq 1$. Furthermore, suppose that
 \begin{align}
\lim_{\eps \to 0+} \int_{\eps}^{1} m(r) dr  < \infty \label{eq:m:Burg:BAD}
\end{align}
holds. Then there exists an initial datum $\theta_0 \in C^\infty(\TT)$, and $T>0$ such that $\lim_{t\to T} \|\theta_x(t)\|_{L^\infty} = \infty$, i.e. we have finite time blow up arising from
smooth initial data.
\end{theorem}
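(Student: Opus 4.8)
The proof will realize the classical Burgers shock–formation mechanism, exploiting that when $\int_0^1 m(r)\,dr<\infty$ the dissipation is too weak at small scales to prevent characteristics from colliding. I would first fix a convenient smooth datum: let $\theta_0$ be odd, $2\pi$–periodic, strictly increasing on a fixed neighborhood $(-b,b)$ of the origin, positive on $(0,\pi)$, and with a sharp transition layer across $0$ — e.g. $\theta_0(x)\approx c_0\tanh(\nu x/c_0)$ for $|x|<b$, with $c_0$ fixed and $\nu$ a large parameter, smoothly matched to a $\sin$–like profile on $b<|x|<\pi$. Since \eqref{eq:Burg} preserves oddness, $\theta(0,t)=0$ for all $t$, and the transport velocity $-\theta$ pushes the region $x>0$ toward the origin: the characteristic $\dot X=-\theta(X,t)$, $X(0)=x_0$, with $x_0$ comparable to the layer width $c_0/\nu$, moves monotonically toward $0$. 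Because $\|\theta_x(t)\|_{L^\infty}\ge \theta(X(t),t)/X(t)$ by the mean value theorem, it suffices to show that $X(t)$ reaches the origin in finite time $T$ while $\theta(X(t),t)$ stays bounded below by a positive constant $\kappa\sim c_0$.

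Everything then reduces to controlling the dissipation along this characteristic. One has $\tfrac{d}{dt}\theta(X(t),t)=-\EL\theta(X(t),t)$, and by oddness $\EL\theta(0,t)=0$, so a good upper bound on $\EL\theta(a,t)$ for small $a>0$ is needed. I would split
\[
\EL\theta(a)=\mathrm{P.V.}\!\int_{\RR}\big(\theta(a)-\theta(a+y)\big)\frac{m(|y|)}{|y|}\,dy
\]
into $|y|<a$, $a<|y|<1$ and $|y|>1$. The tail is $O(\|\theta_0\|_{L^\infty})$ via the periodized kernel and \eqref{eq:m:decay}. For the middle range, monotonicity of $\theta(\cdot,t)$ near the origin together with $\int_a^1 \tfrac{m(y)}{y}\,dy\le \alpha^{-1}m(a)$ (from \eqref{eq:m:decay}) and the elementary fact $a\,m(a)\le 2\int_0^a m(r)\,dr\to0$ (a consequence of \eqref{eq:m:Burg:BAD}) gives a bound of the form $o(1/a)$. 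The delicate range is $|y|<a$: after subtracting the first–order term, which vanishes in principal value, one is left with a second–difference integral $\lesssim\int_0^a \mathrm{osc}_{[a-y,a+y]}(\theta_x)\,m(y)\,dy$, which is small compared to $1/a$ provided the profile stays steep at the natural scale $\sim a$ — this is exactly where \eqref{eq:m:Burg:BAD} forces $\int_0^a m$ to be tiny, and where the Hörmander–type regularity \eqref{eq:m:assume:Hor} of $m$ is used to make the dyadic kernel estimates and the continuation criterion go through. Assembling these, one obtains on the interval of smoothness the pointwise inequality $\EL\theta(a,t)\le \tfrac12\,\theta(a,t)^2/a$ at $a=X(t)$, so that $Q(t):=\theta(X(t),t)/X(t)$ satisfies $Q'\ge \tfrac12 Q^2$; integrating forces $Q(t)\to\infty$ in time $\le 2/Q(0)\sim 1/\nu<\infty$, hence $\|\theta_x(t)\|_{L^\infty}\to\infty$.

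The hard part, of course, is that the estimate on $|y|<a$ involves the solution's own profile at scale $a$ — precisely the object we are trying to show becomes singular — so the argument must be set up as a continuity/bootstrap argument on the maximal interval of smoothness. One assumes a priori that the profile stays concentrated at the natural scale, e.g. that $\theta_x(\cdot,t)$ keeps its maximum near the origin and that $\|\theta_{xx}(t)\|_{L^\infty}$ does not outgrow a fixed power of $\|\theta_x(t)\|_{L^\infty}$, propagates this using the equations satisfied by $\theta_x$ and $\theta_{xx}$ together with the $L^\infty$–type bounds afforded by the sign condition $m\ge 0$, and runs the $Q'\ge\tfrac12 Q^2$ estimate in parallel; taking $\nu$ large makes all relevant scales small enough that the smallness from $\int_0^1 m<\infty$ beats every constant entering the bootstrap. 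It is instructive to verify the whole scheme first on the model $m(r)=r^{-\alpha}$, $0<\alpha<1$, where $\int_0^1 m<\infty$ is exactly the supercriticality condition. I expect the verification of this profile bootstrap, and its compatibility with the kernel estimates above, to be the only genuinely technical point; the rest is the standard characteristic calculus for Burgers shocks.
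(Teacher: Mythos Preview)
Your approach is genuinely different from the paper's, and the difference is instructive. The paper does \emph{not} track a characteristic or any pointwise quantity built from the solution; instead it fixes an odd test function $w$ (equal to $1-x$ on $(0,1)$, zero beyond) and studies the Lyapunov functional
\[
L(t)=\int_0^\infty \theta(x,t)\,w(x)\,dx .
\]
After one integration by parts the nonlinearity produces $\tfrac12\int_0^1\theta^2\ge \tfrac32 L^2$ by Cauchy--Schwarz, while the dissipation becomes $\int_0^\infty \theta\,\EL w$, so the only analytic work is to show $\EL w\in L^1(\RR)$; this is where \eqref{eq:m:Burg:BAD} and \eqref{eq:m:assume:Hor} enter, and it is a computation on a \emph{fixed, explicit} function. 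One obtains $L'\ge L^2-C\|\theta_0\|_{L^\infty}$, which blows up for suitably chosen odd data. No profile control, no bootstrap, no estimate on $\EL\theta$ at a moving point is ever needed.

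Your scheme, by contrast, hinges on the pointwise inequality $\EL\theta(X(t),t)\le\tfrac12\,\theta(X(t),t)^2/X(t)$, and to get it you must control the second difference of $\theta$ at the scale $a=X(t)$ --- equivalently, the oscillation of $\theta_x$ near the forming shock. That quantity is precisely what is blowing up. Your own back-of-the-envelope gives an inner contribution of order $a^{-2}\int_0^a y\,m(y)\,dy$, which under \eqref{eq:m:critical} is only $O(1/a)$, not $o(1/a)$; closing with the constant $\tfrac12$ therefore forces a quantitative relation between $\|\theta_0\|_{L^\infty}$ and the kernel constants \emph{and} a propagated bound of the type $\|\theta_{xx}(t)\|_{L^\infty}\lesssim\|\theta_x(t)\|_{L^\infty}^2$. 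You acknowledge this bootstrap is ``the only genuinely technical point,'' but it is the whole difficulty: the equation for $\theta_{xx}$ contains $3\theta_x\theta_{xx}$ and $\EL\theta_{xx}$, and there is no obvious maximum principle for the ratio $\theta_{xx}/\theta_x^2$ when $\EL$ is nonlocal and supercritical. I do not see how to close it without something close to a full profile stability argument, which is far heavier than anything the theorem warrants. The paper's integral functional sidesteps this completely because the dissipation lands on the test function rather than on the solution.
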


A natural class of dissipation terms is associated with Fourier multiplier operators. This representation is closely related to the form \eqref{eq:L:general}. As noted above, when $m(r) =
r^{-\alpha} C_{d,\alpha}$ for a suitable constant $C_{d,\alpha}$, then $\EL = \LL^\alpha,$ corresponding to the Fourier multiplier with symbol $P(\zz) = |\zz|^\alpha$. One may generalize this
statement as follows. Let $P(\zz)$ be a sufficiently nice Fourier multiplier (see Lemmas~\ref{lemma:Fourier:upper} and~\ref{lemma:Fourier:lower} for precise assumptions on $P$), and let $K(y)$
be the convolution kernel associated to the multiplier $P$, i.e. $\hat{\EL \theta}(\zz) = P(\zz) \hat{\theta}(\zz)$, where $\EL$ is the operator defined in \eqref{eq:L:general}. Then there exists
a positive constant $C$ that depends only on $P$, such that \eqref{eq:K:cond} holds for all sufficiently small $y$, with $m(y) = P(1/\zz)$. This turns out to be sufficient to prove an analog of
Theorem~\ref{thm:SQG:global} (and Theorem~\ref{thm:Burg:global}).

\begin{theorem}[{\bf Global regularity for \just supercritical SQG}] \label{thm:SQG:Fourier}
Let $P$ be a radially symmetric Fourier multiplier that is smooth away from zero, non-decreasing, satisfies  $P(0)=0$, $P(\zz) \to \infty$ as $|\zz| \to \infty$, as well as conditions
\eqref{eq:P:cond:doubling}--\eqref{eq:P:cond:Hormander}, and  \eqref{eq:P:cond:lower}. Suppose also
\begin{align}
P(|\zz|) \leq C |\zz| \label{eq:cond:P:regularity:1}
\end{align}
for all $|\zz|$ sufficiently large,
\begin{align}
|\zz|^{-\alpha}P(|\zz|) \mbox{ is non-decreasing} \label{eq:cond:P:regularity:2}
\end{align}
 for some $\alpha>0$, and
\begin{align}
\int_{1}^{\infty} P( |\zz|^{-1} ) d|\zz| = \infty \label{eq:cond:P:regularity:3}.
\end{align}
Then for any $\theta_{0}$ that is smooth and periodic, the Cauchy problem for the dissipative SQG equation \eqref{eq:P:SQG:1}--\eqref{eq:P:SQG:2} has a unique global in time smooth solution.
\end{theorem}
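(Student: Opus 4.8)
The plan is to realize the dissipation operator $\EL$ with Fourier symbol $P$ as a nonlocal operator of the form \eqref{eq:L:general} whose kernel $K$ obeys the two-sided bound \eqref{eq:K:cond} for an admissible profile $m$, and then to invoke the extension of Theorem~\ref{thm:SQG:global} to such operators (the paper notes that its results hold for kernels satisfying \eqref{eq:K:cond}). The whole statement then reduces to identifying the correct $m$ and checking that it satisfies \eqref{eq:m:critical}, \eqref{eq:m:decay} and \eqref{eq:m:SQG}.

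First I would pass to physical space. Since $P$ is radial, smooth away from the origin, vanishes at $0$, and grows at most linearly at infinity, its inverse Fourier transform agrees away from the origin with a positive radial function $K$, and one has $\EL\theta(x)=\int_{\RR^d}(\theta(x)-\theta(x+y))K(y)\,dy$ in principal value sense, with $P(\zz)=\int_{\RR^d}(1-\cos(\zz\cdot y))K(y)\,dy$. The key input — precisely the content of Lemmas~\ref{lemma:Fourier:upper} and~\ref{lemma:Fourier:lower} cited above — is the pointwise comparison
\[ \frac1C\,\frac{m(y)}{|y|^d}\ \le\ K(y)\ \le\ C\,\frac{m(y)}{|y|^d},\qquad m(r):=P(1/r), \]
valid for all sufficiently small $|y|$; here the doubling and H\"ormander-type conditions on $P$ serve to control the oscillation in the cosine transform and its inversion, while the lower-bound condition on $P$ rules out cancellation and keeps $K>0$. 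For $|y|$ bounded away from the origin, $K$ is a smooth, rapidly decaying function, so one may simply (re)define $m$ on $[1,\infty)$ so that \eqref{eq:K:cond} continues to hold there and $m$ remains non-increasing with the structural properties below; this is harmless, since only the behavior of $K$ near $y=0$ affects the (super)criticality of the equation.

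Second, I would check that $m(r)=P(1/r)$ is admissible for Theorem~\ref{thm:SQG:global}. It is non-increasing (because $P$ is non-decreasing and $r\mapsto 1/r$ is decreasing) and singular at the origin (since $P(\rho)\to\infty$ as $\rho\to\infty$). Setting $\rho=1/r$: assumption \eqref{eq:cond:P:regularity:1} gives $rm(r)=rP(1/r)\le C$ for small $r$, which is \eqref{eq:m:critical}; assumption \eqref{eq:cond:P:regularity:2} says $r^\alpha m(r)=\rho^{-\alpha}P(\rho)$ is non-decreasing in $\rho$, hence non-increasing in $r$, which is \eqref{eq:m:decay}; and the change of variables $\rho=1/r$ turns $\int_\eps^1 m(r)\,dr$ into $\int_1^{1/\eps}P(\rho)\,\rho^{-2}\,d\rho$, whose divergence as $\eps\to 0^+$ is the hypothesis \eqref{eq:cond:P:regularity:3}, that is \eqref{eq:m:SQG}. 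Thus $m$ is admissible and the generalized version of Theorem~\ref{thm:SQG:global} applies, delivering the unique global smooth solution of \eqref{eq:P:SQG:1}--\eqref{eq:P:SQG:2}.

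I expect the main obstacle to be the kernel comparison itself, i.e. Lemmas~\ref{lemma:Fourier:upper} and~\ref{lemma:Fourier:lower}: extracting sharp two-sided pointwise bounds on the convolution kernel from the symbol $P$ is the standard, somewhat delicate business of passing between a Fourier multiplier and its kernel via oscillatory integral estimates, and it is precisely there that the doubling, H\"ormander-type and lower-bound hypotheses on $P$ are genuinely used. Everything else is either routine (the substitution $\rho=1/r$ and the extension of $m$ past $r=1$) or already contained in Theorem~\ref{thm:SQG:global}, whose proof — a nonlocal maximum principle controlling a well-chosen family of moduli of continuity — is the substantive part of the paper and is not reproved here.
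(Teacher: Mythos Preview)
Your reduction has a genuine gap: you assume that the kernel $K$ associated to the multiplier $P$ is positive on all of $\RR^d\setminus\{0\}$, so that one can extend $m$ past the origin and keep the two-sided bound \eqref{eq:K:cond} globally. This is not justified by the hypotheses. Lemma~\ref{lemma:Fourier:lower} only yields $K(y)\geq C^{-1}|y|^{-d}P(|y|^{-1})$ for \emph{sufficiently small} $|y|$; for $|y|$ bounded away from the origin the paper only has the upper bound $|K(y)|\leq C|y|^{-d}P(|y|^{-1})$ from Lemma~\ref{lemma:Fourier:upper}, and in fact explicitly notes that for this class of multipliers positivity of $K$ is not assured. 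If $K$ is negative on some annulus, no nonnegative $m$ can make \eqref{eq:K:cond} hold there, and the direct appeal to the generalized Theorem~\ref{thm:SQG:global} collapses. A secondary consequence is that the usual $L^\infty$ maximum principle for $\theta$ is unavailable, which your argument implicitly relies on through Theorem~\ref{thm:SQG:global}.

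The paper closes exactly this gap with additional work you have not accounted for. It splits $K=K_1+K_2$ via a cutoff at scale $\sim\sigma$, so that $K_1$ is a genuine kernel of type \eqref{eq:L:general} with $m(r)=C^{-1}P(r^{-1})\varphi_0(r)$ (vanishing for $r\geq 2\sigma$), while $K_2\in L^1$ but may change sign. It then proves a substitute $L^\infty$ bound (Lemma~\ref{linfty11}) using only the near-origin positivity of $K$ and the $L^2$ conservation, and revisits the modulus-of-continuity argument of Section~\ref{sec:SQG:global}: the $\EL_1$ piece feeds into $\DD_B$ and $\DD_B^\perp$ as before, while the $\EL_2$ piece contributes an error $\leq 2\|K_2\|_{L^1}\min\{\omb(\xi),M_*\}$ that must be absorbed by $\tfrac12\DD_B(\xi)$ via a separate estimate (see \eqref{eq:DB:bound}--\eqref{eq:DB}), with $B$ taken large enough. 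Your verification that $m(r)=P(1/r)$ satisfies \eqref{eq:m:critical}--\eqref{eq:m:decay} and \eqref{eq:m:SQG} is correct and is indeed part of the argument, but it is not the whole story.
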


In particular, Theorem~\ref{thm:SQG:Fourier} proves global regularity of solutions for dissipative terms given by multipliers with behavior $P(\zeta) \sim |\zeta|(\log|\zeta|)^{-a}$ for large $\zeta,$ where $0
\leq a \leq 1.$ The details of the assumptions on $P$ and more discussion can be found in Section~\ref{app:Fourier} below.

\section{Pointwise moduli of continuity} \label{sec:prelim:MOC}

\begin{definition}[\bf Modulus of continuity] \label{def:MOC}
We call a function $\omega \colon (0,\infty) \rightarrow (0,\infty)$ a {modulus of continuity} if $\omega$ is non-decreasing, continuous, concave, piecewise $C^2$ with one sided derivatives, and
 additionally satisfies $\omega'(0+)<\infty$ and $\omega''(0+) = -\infty$. We say that a smooth function $f$ {\em obeys} the modulus of continuity $\omega$ if $|f(x) - f(y)| <
\omega(|x-y|)$ for all $x\neq y$.
\end{definition}
We recall that if $f\in C^\infty(\TT^2)$ obeys the modulus $\omega$, then $\Vert \nabla f \Vert_{L^\infty} < \omega'(0)$ \cite{KNV}. In addition, observe that a function $f\in C^\infty(\TT^2)$
automatically obeys any modulus of continuity $\omega(\xi)$ that lies above the function $\min\{\xi \Vert \nabla f \Vert_{L^\infty}, 2 \Vert f \Vert_{L^\infty} \}$.

The following lemma gives the modulus of continuity of the Riesz transform of a given function.
\begin{lemma}[\bf Modulus of continuity under a Riesz transform]\label{lemma:u:MOC}
Assume that $\theta$ obeys the modulus of continuity $\omega$, and that the drift velocity is given by the constitutive law $u =  \nabla^\perp \LL^{-1} \theta$. Then $u$ obeys the modulus of
continuity $\Omega$ defined as
\begin{align}
\Omega(\xi) = A \left( \int_{0}^{\xi} \frac{\omega(\eta)}{\eta} d\eta + \xi \int_{\xi}^{\infty} \frac{\omega(\eta)}{\eta^{2}} d\eta \right) \label{eq:u:moc}
\end{align}
for some positive universal constant $A>0$.

Moreover, for any two points $x ,y$ with $|x - y| = \xi >0$ we have
\begin{align}
 \left| (u(x)-u(y)) \cdot \frac{x-y}{|x-y|} \right| \leq  \tilde\Omega(\xi) +  \Omega^\perp(\xi)    \label{eq:u:dot:moc}
\end{align}
where
\begin{align}
\tilde\Omega(\xi) = A \left( \omega(\xi) + \xi \int_{\xi}^{\infty} \frac{\omega(\eta)}{\eta^2} d\eta \right) \label{eq:Omega:def}
\end{align}
and
\begin{align}
 \Omega^\perp(\xi) = A \int_{0}^{\xi/4} \!\! \int_{\xi/4}^{3\xi/4} \big( \theta(\eta,\nu) - \theta(-\eta,\nu) - \theta(\eta,-\nu) + \theta(-\eta,-\nu) \big) \frac{\nu}{\big( (\xi/2 - \eta)^2 +
 \nu^2 \big)^{3/2}} d\eta d\nu \label{eq:Omega:perp:def}
\end{align}
where $A$ is a universal constant.
\end{lemma}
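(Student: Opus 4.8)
The plan is to reduce both assertions to the explicit singular integral representation of $u$ and then carry out a near/far spatial decomposition adapted to the pair $x,y$.

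Writing $\theta$ mean zero, the constitutive law $u=\nabla^\perp\Lambda^{-1}\theta$ gives $u(x)=c\,\mathrm{p.v.}\!\int_{\RR^2}\frac{(x-z)^\perp}{|x-z|^3}\,\theta(z)\,dz$ for a universal constant $c$; the kernel has vanishing mean on every circle, so one may subtract $\theta(x)$ inside the integral, making it locally absolutely convergent. For \eqref{eq:u:moc}, write $u(x)-u(y)$ as the difference of two such representations (subtracting $\theta(x)$, resp.\ $\theta(y)$) and split $\RR^2=D_1\cup D_2$ with $D_1=B(x,2\xi)\cup B(y,2\xi)$. On $D_1$ each of the four resulting pieces is estimated crudely: when the singularity of the relevant kernel lies in the domain of integration one uses $|\theta(z)-\theta(p)|\le\omega(|z-p|)$ to get $\lesssim\int_0^{3\xi}\omega(r)r^{-1}\,dr$, and otherwise the kernel is $\lesssim\xi^{-2}$ on a set of area $\lesssim\xi^2$, giving $\lesssim\omega(\xi)$. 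On $D_2$ I rewrite the integrand as $\big(\tfrac{(x-z)^\perp}{|x-z|^3}-\tfrac{(y-z)^\perp}{|y-z|^3}\big)(\theta(z)-\theta(x))+\tfrac{(y-z)^\perp}{|y-z|^3}(\theta(x)-\theta(y))$: the first summand is controlled by the mean value bound $\big|\tfrac{(x-z)^\perp}{|x-z|^3}-\tfrac{(y-z)^\perp}{|y-z|^3}\big|\lesssim\xi|x-z|^{-3}$ together with $|\theta(z)-\theta(x)|\le\omega(|x-z|)$, yielding $\lesssim\xi\int_\xi^\infty\omega(r)r^{-2}\,dr$; the second summand is $(\theta(x)-\theta(y))$ times $\int_{D_2}\tfrac{(y-z)^\perp}{|y-z|^3}\,dz$, which is an $O(1)$ vector because the integral of this kernel over $\RR^2\setminus B(y,2\xi)$ vanishes and the leftover over $B(x,2\xi)\setminus B(y,2\xi)$ has kernel $\lesssim\xi^{-2}$ and area $\lesssim\xi^2$, so this summand is $\lesssim\omega(\xi)$. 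Summing, and using $\omega(3\xi)\le C\omega(\xi)\le C\int_0^\xi\omega(r)r^{-1}\,dr$ (concavity of $\omega$ with $\omega(0+)\ge0$), we obtain $|u(x)-u(y)|\le\Omega(|x-y|)$.

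For \eqref{eq:u:dot:moc}, after a rotation and translation assume $x=(\xi/2,0)$, $y=(-\xi/2,0)$, so $\tfrac{x-y}{|x-y|}=(1,0)$ and it suffices to bound $u_1(x)-u_1(y)$. The representation gives
\[
u_1(x)-u_1(y)=c\,\mathrm{p.v.}\!\int_{\RR^2}G(z)\,\theta(z)\,dz,\qquad G(z)=z_2\Big(\big((\tfrac{\xi}{2}-z_1)^2+z_2^2\big)^{-3/2}-\big((\tfrac{\xi}{2}+z_1)^2+z_2^2\big)^{-3/2}\Big),
\]
and $G$ is odd in $z_1$ and in $z_2$; folding the integral into the first quadrant through these two reflections produces the absolutely convergent integral
\[
u_1(x)-u_1(y)=c\!\int_{z_1>0,\,z_2>0}\!\!G(z)\,\Delta\theta(z)\,dz,\qquad \Delta\theta(z)=\theta(z_1,z_2)-\theta(-z_1,z_2)-\theta(z_1,-z_2)+\theta(-z_1,-z_2),
\]
with $|\Delta\theta(z)|\le 2\min\{\omega(2z_1),\omega(2z_2)\}$. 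Decompose the first quadrant as $\mathcal B\cup\mathcal B^c$, where $\mathcal B=\{\xi/4<z_1<3\xi/4,\ 0<z_2<\xi/4\}$ is a box surrounding the singularity $z=x$. On $\mathcal B$ split $G=G_x-G_y$ with $G_x(z)=z_2|x-z|^{-3}$, $G_y(z)=z_2|y-z|^{-3}$: in the coordinates $(\eta,\nu)=(z_1,z_2)$ the $G_x$-part is, up to a universal constant absorbed into the definition, exactly $\Omega^\perp(\xi)$, while $|y-z|\ge3\xi/4$ on $\mathcal B$, so the $G_y$-part is $\lesssim\xi^{-2}\cdot\omega(\xi)\cdot|\mathcal B|\lesssim\omega(\xi)$. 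On $\mathcal B^c\cap\{z_1,z_2>0\}$ one has $|x-z|\ge\xi/4$ and (for all $z$ in the first quadrant) $|y-z|\ge\xi/2$, so $G$ is nonsingular there; split $\mathcal B^c$ into the intermediate region $\{|z|\le4\xi\}\setminus\mathcal B$ (where $|G|\lesssim\xi^{-2}$, $|\Delta\theta|\lesssim\omega(\xi)$, area $\lesssim\xi^2$, hence $\lesssim\omega(\xi)$) and the far region $\{|z|>4\xi\}$ (where $\big|\,|x-z|-|y-z|\,\big|\le\xi$ gives $|G(z)|\lesssim z_2\,\xi\,|z|^{-4}$, which combined with $|\Delta\theta|\le2\omega(2z_2)$ and $\int_0^{\pi/2}\sin\phi\,\omega(2r\sin\phi)\,d\phi\le\omega(2r)$ gives $\lesssim\xi\int_\xi^\infty\omega(r)r^{-2}\,dr$). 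Collecting the four contributions, $u_1(x)-u_1(y)$ equals a fixed constant multiple of $\Omega^\perp(\xi)$ plus an error bounded in absolute value by $A\big(\omega(\xi)+\xi\int_\xi^\infty\omega(r)r^{-2}\,dr\big)=\tilde\Omega(\xi)$, which yields \eqref{eq:u:dot:moc}.

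The only delicate point is the bookkeeping in the second part. The box $\mathcal B$ must be chosen so that it surrounds the singularity of $G_x$ yet stays a definite distance from that of $G_y$, its complement in the first quadrant stays a definite distance from the singularity of $G_x$, and — crucially — everything outside $\mathcal B$ is controlled by $\tilde\Omega(\xi)$ rather than by the a priori larger quantity $\int_0^\xi\omega(r)r^{-1}\,dr$ that governs \eqref{eq:u:moc}; this is exactly what the choice of $\mathcal B$ accomplishes, since the neighborhood of $z=x$ inside the cone $z_2>0$ — the sole source of that larger quantity — is precisely what has been isolated as $\Omega^\perp$. One must also track the orientation and normalizing constants (choosing the constant $A$ in $\Omega^\perp$ accordingly) so that the term appears as $+\Omega^\perp(\xi)$; and on $\TT^2$ the kernel $|x|^{-1}$ is replaced by its periodization, which modifies only the far field by a smooth bounded correction. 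Estimate \eqref{eq:u:moc} is the classical modulus-of-continuity bound for a Riesz-type transform and is routine once these cancellations are arranged.
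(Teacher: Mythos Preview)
The paper does not give its own proof of this lemma; immediately after the statement it writes ``The proof of \eqref{eq:u:moc} may be found in \cite[Appendix]{KNV}, while \eqref{eq:u:dot:moc} was obtained in \cite[Lemma~5.2]{K2}.'' Your argument is a correct, self-contained account of precisely those two computations: the near/far decomposition of the Riesz kernel yielding \eqref{eq:u:moc} is exactly the KNV appendix calculation, and the reflection in $z_1,z_2$ to produce the second difference $\Delta\theta$, followed by isolation of the box $\mathcal B=(\xi/4,3\xi/4)\times(0,\xi/4)$ around the singularity of $G_x$, is exactly the mechanism of \cite{K2}. So your approach coincides with what the paper invokes by citation.

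One bookkeeping point worth flagging (which you already allude to): your computation yields $u_1(x)-u_1(y)=c\cdot I+E$ with $|E|\le\tilde\Omega(\xi)$, where $I$ is the integral appearing in the definition of $\Omega^\perp$. Passing to absolute values gives $|u_1(x)-u_1(y)|\le |c|\,|I|+\tilde\Omega(\xi)$, i.e.\ a bound by $\tilde\Omega+|\Omega^\perp|$ rather than $\tilde\Omega+\Omega^\perp$. This is harmless for the application, since the only place the paper uses $\Omega^\perp$ is via Lemma~\ref{lemma:perp}, whose proof (through \eqref{eq:quad:symmetry}) actually controls $|\Omega^\perp|$; but strictly speaking the inequality \eqref{eq:u:dot:moc} as printed should be read with $|\Omega^\perp|$ on the right, or with a sign convention fixed so that $\Omega^\perp\ge0$. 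Your closing remark about tracking orientation and normalizing constants is exactly this observation.
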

The proof of \eqref{eq:u:moc} may be found in \cite[Appendix]{KNV}, while \eqref{eq:u:dot:moc} was obtained in~\cite[Lemma 5.2]{K2}, to which we refer for further details.

\begin{lemma}[\bf Dissipation control] \label{lemma:D}
Let $\EL$ be defined as in \eqref{eq:L:general}, with $K$ satisfying \eqref{eq:K:cond}.
Assume that $\theta \in C^\infty(\TT^2)$ obeys a concave modulus of continuity $\omega$. Suppose that there exist two points $x ,y$ with $|x - y| = \xi >0$ such that $ \theta(x) - \theta(y) =
\omega(\xi)$. Then we have
\begin{align}
\EL\theta(x) - \EL\theta(y) \geq  \DD(\xi)  +\DD^\perp(\xi) \label{eq:lemma:D}
\end{align}
where 
\begin{align}
\DD(\xi) &= \frac{1}{A} \int_{0}^{\xi/2} \big( 2 \omega(\xi) - \omega(\xi + 2 \eta) - \omega(\xi-2\eta) \big) \frac{m(2 \eta)}{\eta} d\eta \notag \\
& \qquad + \frac{1}{A} \int_{\xi/2}^\infty \big( 2 \omega(\xi) - \omega(\xi + 2 \eta) + \omega(2\eta - \xi) \big) \frac{m(2 \eta)}{\eta} d\eta \label{eq:D:def}
\end{align}
and
\begin{align}
\DD^\perp(\xi) &= \frac{1}{A}  \int_{0}^{\xi/4}\!\!\! \int_{\xi/4}^{3\xi/4} \big( 2 \omega(2 \eta) - \theta(\eta,\nu) + \theta(-\eta,\nu) - \theta(\eta,-\nu) + \theta(-\eta,-\nu)\big) \notag \\
&\qquad \qquad \qquad \qquad \times \frac{m(\sqrt{(\xi/2-\eta)^2 + \nu^2})}{(\xi/2 - \eta)^2 + \nu^2} d\eta d\nu \label{eq:D:perp:def}
\end{align}
with some sufficiently large universal constant $A>0$ (that we can, for simplicity of presentation, choose to be the same as in Lemma~\ref{lemma:u:MOC}). The corresponding lower bound in one
dimension includes only the $\DD$ term.\end{lemma}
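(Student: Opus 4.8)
\smallskip

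\noindent\textbf{Proof plan.} The strategy is to transplant the dissipation estimates of \cite[Appendix]{KNV} and \cite{K2}, stated there for $\EL=\LL^{\alpha}$, to an arbitrary kernel $K$ obeying \eqref{eq:K:cond}. By translation and rotation invariance of $\EL$ we may place $x=(\xi/2,0)$ and $y=(-\xi/2,0)$, dropping the second coordinate when $d=1$. Using \eqref{eq:L:general} and $\theta(x)-\theta(y)=\omega(\xi)$,
\[
\EL\theta(x)-\EL\theta(y)=\int_{\RR^{d}}\big(\omega(\xi)-\theta(x+z)+\theta(y+z)\big)\,K(z)\,dz ,
\]
and, since $\theta$ obeys $\omega$ and $|(x+z)-(y+z)|=\xi$, the integrand is nonnegative for every $z$. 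Hence the integral only decreases when restricted to a measurable subset of $\RR^{d}$, and on any subset invariant under a reflection symmetry of $K$ (which is radial) the integrand may be replaced by the corresponding reflection average without changing the value. The proof extracts $\DD(\xi)$ from a thin cone around the $x$--$y$ axis, $\DD^{\perp}(\xi)$ from a box region near the perpendicular bisector, and discards the (nonnegative) contribution of the rest.

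\smallskip

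For $\DD(\xi)$ I would restrict to the cone $\{|z_2|<|z_1|/4\}$ and symmetrize only in $z_1\mapsto -z_1$. The point is that the four points $(\pm\xi/2\pm z_1,z_2)$ are then collinear, so their separations are \emph{exactly} $\xi+2|z_1|$ and $|\xi-2|z_1||$, with no contribution from $z_2$; bounding the associated sum of values of $\theta$ by $\omega$ of these separations --- pairing the two innermost of the four points against each other (distance $|\xi-2|z_1||$) and the two outermost against each other (distance $\xi+2|z_1|$), each pair carrying opposite signs --- shows the averaged integrand is at least $\tfrac12\big(2\omega(\xi)-\omega(\xi+2|z_1|)-\omega(|\xi-2|z_1||)\big)$. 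By concavity of $\omega$ this is nonnegative when $|z_1|<\xi/2$; where it is negative one uses instead that the true integrand is $\ge 0$. One then integrates $z_2$ against $K$ over $(-|z_1|/4,|z_1|/4)$, which by \eqref{eq:K:cond} and the monotonicity of $m$ contributes a factor comparable to $m(2|z_1|)/|z_1|$, and with $\eta=|z_1|$ and all numerical factors absorbed into $A$ one arrives at \eqref{eq:D:def}. In $d=1$ there is no transverse variable, the cone is all of $\RR$, and this is already the full lower bound.

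\smallskip

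For $\DD^{\perp}(\xi)$, the genuinely two-dimensional ingredient, I would restrict to the union $E^{\perp}$ of the four boxes obtained from $\{z:\ x+z\in(0,\xi/4)\times(\xi/4,3\xi/4)\}$ under $z_1\mapsto -z_1$ and $z_2\mapsto -z_2$; since on $E^{\perp}$ one has $|z_2|\ge\xi/4\ge|z_1|/2$, $E^{\perp}$ is disjoint from the cone above, so nothing is counted twice. Averaging over the four reflections on the base box $z=(\eta-\xi/2,\nu)$, $\eta\in(0,\xi/4)$, $\nu\in(\xi/4,3\xi/4)$, carries $x\pm z$ and $y\pm z$ onto the four ``near'' points $(\pm\eta,\pm\nu)$ (those closest to the bisector) and four ``far'' points $(\pm(\xi-\eta),\pm\nu)$. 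I would keep the values of $\theta$ at the near points \emph{explicit} --- this is essential, as in the proof of Theorem~\ref{thm:SQG:global} this quantity is combined with $\Omega^{\perp}(\xi)$ from Lemma~\ref{lemma:u:MOC} and the two nearly cancel --- and estimate each far pair below by $-\omega(2(\xi-\eta))$, which leaves
\[
-\theta(\eta,\nu)+\theta(-\eta,\nu)-\theta(\eta,-\nu)+\theta(-\eta,-\nu)+4\omega(\xi)-2\omega\big(2(\xi-\eta)\big).
\]
Since $2\eta+(2\xi-2\eta)=2\xi$ and $\omega$ is concave with $\omega(0)=0$, we have $\omega(2\eta)+\omega(2\xi-2\eta)\le 2\omega(\xi)$, hence $4\omega(\xi)-2\omega(2(\xi-\eta))\ge 2\omega(2\eta)$, which produces the leading term of \eqref{eq:D:perp:def}; the full bracket is then $\ge 0$ because $\theta$ obeys $\omega$. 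On the box $K(z)$ is comparable to $m(|z|)/|z|^{2}$ with $|z|^{2}=(\xi/2-\eta)^{2}+\nu^{2}$, so \eqref{eq:K:cond} converts the weight into that of \eqref{eq:D:perp:def}, again with $A\propto C$. Assembling the three regions yields $\EL\theta(x)-\EL\theta(y)\ge\DD(\xi)+\DD^{\perp}(\xi)$; the principal value is legitimate throughout because $\theta\in C^{\infty}(\TT^{d})$ and $\int_{0}^{1}r\,m(r)\,dr<\infty$ by \eqref{eq:m:critical}.

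\smallskip

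The step I expect to require the most care is the bookkeeping for $\DD^{\perp}$: tracking the eight values of $\theta$ produced by the reflection group on the base box, checking that discarding the four far ones and the surplus $4\omega(\xi)$ leaves exactly the nonnegative combination in \eqref{eq:D:perp:def}, and arranging the cone and the boxes so that the two contributions come from genuinely disjoint sets. By comparison the replacement $K\rightsquigarrow m(|z|)/(C|z|^{d})$ via \eqref{eq:K:cond} and the treatment of the principal value are routine, the only subtlety being to apply the kernel bound only where the relevant integrand has been rendered nonnegative.
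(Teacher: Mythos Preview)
Your plan has two genuine gaps, both rooted in using the ``wrong'' symmetrization.

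\textbf{The $\DD$ term: wrong sign in the second integral.} Your $z_1\mapsto -z_1$ reflection, applied to the integrand $\omega(\xi)-\theta(x+z)+\theta(y+z)$, forces you to pair the four collinear points so that for $|z_1|>\xi/2$ the lower bound is $\tfrac12\big(2\omega(\xi)-\omega(\xi+2|z_1|)-\omega(2|z_1|-\xi)\big)$, with a \emph{minus} in front of $\omega(2|z_1|-\xi)$. Formula \eqref{eq:D:def} has a \emph{plus} there, and this sign is not cosmetic: it is exactly what makes the large-$\xi$ estimate \eqref{eq:D:high} work (one uses $\omega(\xi+2\eta)-\omega(2\eta-\xi)\le\omega(2\xi)$). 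Discarding the $|z_1|>\xi/2$ region, as you propose, leaves only the first integral in \eqref{eq:D:def}, which does not suffice for Section~\ref{sec:SQG:global}. The paper avoids this by first flipping the integration variable in the $\EL\theta(y)$ integral alone (using that $K$ is even), obtaining the integrand $\omega(\xi)-\theta(\xi/2+\eta,\nu)+\theta(-\xi/2-\eta,\nu)$, and then reflecting $\eta\mapsto -\xi-\eta$. Under that reflection the \emph{same} $\theta$-difference reappears with the opposite sign but with kernel $K(\xi+\eta,\nu)$, so one picks up the factor $K(\eta,\nu)-K(\xi+\eta,\nu)\ge0$ and, after bounding $(\theta-\theta)\le\omega(\xi+2\eta)$, the $+\omega$ term falls out for free.

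\textbf{The $\DD^\perp$ term: the box misses the singularity.} Your base box $\{z:\ x+z\in(0,\xi/4)\times(\xi/4,3\xi/4)\}$ is $z\in(-\xi/2,-\xi/4)\times(\xi/4,3\xi/4)$, and all four reflected copies stay at distance $\ge\xi/(2\sqrt2)$ from $z=0$. Hence $K(z)$ is bounded of order $m(\xi)/\xi^2$ there, and the expression you produce is \emph{not} \eqref{eq:D:perp:def}: in \eqref{eq:D:perp:def} one has $\eta\in(\xi/4,3\xi/4)$ and $\nu\in(0,\xi/4)$, so the kernel $m(\sqrt{(\xi/2-\eta)^2+\nu^2})/((\xi/2-\eta)^2+\nu^2)$ is genuinely singular at $(\eta,\nu)=(\xi/2,0)$. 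That singularity is the whole point --- it is what allows the pointwise comparison with $\Omega^\perp$ in Lemma~\ref{lemma:perp}. Your disjointness-from-the-cone argument is precisely what pushes the box away from the singularity; if you move the box to capture $z\approx0$, the cone and box overlap. The paper sidesteps this tension because the same $\eta\mapsto-\xi-\eta$ trick produces for $T^\perp$ the weight $K(\eta-\xi/2,\nu)-K(\eta+\xi/2,\nu)$; on the box $|\eta-\xi/2|,|\nu|\le\xi/4$ this difference is $\ge\tfrac12 K(\eta-\xi/2,\nu)$, giving the singular kernel, while $T^\parallel$ is handled by integrating $\nu$ over all of $\RR$ (no cone is needed), so there is no disjointness issue at all.
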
 Note that $\DD\geq 0$ due to the concavity of $\omega$, while $\DD^\perp \geq 0$ since $\theta$ obeys the modulus of continuity $\omega$. The
above lemma may be obtained along the lines of \cite[Section 5]{K2}, where it was obtained for $\EL=\LL$. However, some modifications are necessary for more general diffusions we consider, 
and we provide a proof 
in Appendix~\ref{app:D} below.
We conclude this section by establishing a bound for $\Omega^{\perp}(\xi)$ in terms of $\DD^{\perp}(\xi)$. 

\begin{lemma}[\bf Connection between $\Omega^{\perp}$ and $\DD^{\perp}$]\label{lemma:perp}
Let $m$ be as in Section~\ref{sec:m:assume}, and assume $\theta$ obeys the modulus of continuity $\omega$. For $\Omega^{\perp}(\xi)$ and $\DD^{\perp}(\xi)$ defined via \eqref{eq:Omega:def} and
\eqref{eq:D:perp:def} respectively we have
\begin{align}
m(\xi) \Omega^{\perp}(\xi) \leq A^2 \DD^{\perp}(\xi) \label{eq:perp:bound}
\end{align}
for all $\xi>0$.
\end{lemma}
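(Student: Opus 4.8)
The plan is to compare the integrands of $\Omega^{\perp}(\xi)$ and $\DD^{\perp}(\xi)$ pointwise on their common domain of integration $\{(\eta,\nu): 0<\eta<\xi/4,\ \xi/4<\nu<3\xi/4\}$: I would bound both the singular kernel and the finite‑difference combination of $\theta$ appearing in $\Omega^{\perp}$ by the corresponding quantities in $\DD^{\perp}$, and then read off \eqref{eq:perp:bound}. Three elementary facts are needed: on this domain the argument of $m$ in $\DD^{\perp}$ is at most $\xi$; the kernel of $\Omega^{\perp}$ is dominated by that of $\DD^{\perp}$; and an algebraic comparison of the two $\theta$‑combinations that uses only the hypothesis that $\theta$ obeys $\omega$.

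For the decisive algebraic point I would abbreviate the factor multiplying the kernel in $\Omega^{\perp}$ (see \eqref{eq:Omega:perp:def}) as
\[
G(\eta,\nu) := \theta(\eta,\nu) - \theta(-\eta,\nu) - \theta(\eta,-\nu) + \theta(-\eta,-\nu),
\]
and the factor in $\DD^{\perp}$ (see \eqref{eq:D:perp:def}) as $N(\eta,\nu) := 2\omega(2\eta) - \theta(\eta,\nu) + \theta(-\eta,\nu) - \theta(\eta,-\nu) + \theta(-\eta,-\nu)$. A direct computation gives
\[
N-G = 2\big(\omega(2\eta) - \big(\theta(\eta,\nu) - \theta(-\eta,\nu)\big)\big), \qquad N+G = 2\big(\omega(2\eta) - \big(\theta(\eta,-\nu) - \theta(-\eta,-\nu)\big)\big).
\]
Since the pairs $(\eta,\nu),(-\eta,\nu)$ and $(\eta,-\nu),(-\eta,-\nu)$ are each at distance $2\eta$, the assumption that $\theta$ obeys $\omega$ forces both right‑hand sides to be nonnegative, whence $|G(\eta,\nu)| \le N(\eta,\nu)$ on the whole domain. (In particular $N\ge 0$, which recovers the nonnegativity of $\DD^{\perp}$.)

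For the geometric and kernel bounds: $0<\eta<\xi/4$ gives $\xi/4 < \xi/2-\eta < \xi/2$, and $\xi/4 < \nu < 3\xi/4$, so $(\xi/2-\eta)^2 + \nu^2 < \xi^2/4 + 9\xi^2/16 < \xi^2$; since $m$ is non‑increasing this yields $m(\xi) \le m\big(\sqrt{(\xi/2-\eta)^2+\nu^2}\big)$ on the domain. Moreover, writing $r = \sqrt{(\xi/2-\eta)^2+\nu^2}$, one has $\nu\, r^{-3} = (\nu/r)\, r^{-2} \le r^{-2}$. Combining these with $m(\xi)\ge 0$ and $|G|\le N$,
\[
m(\xi)\,\Omega^{\perp}(\xi) \le A \int_{0}^{\xi/4}\!\!\int_{\xi/4}^{3\xi/4} m(\xi)\,|G(\eta,\nu)|\, \frac{d\eta\, d\nu}{(\xi/2-\eta)^2+\nu^2} \le A \int_{0}^{\xi/4}\!\!\int_{\xi/4}^{3\xi/4} N(\eta,\nu)\, \frac{m\big(\sqrt{(\xi/2-\eta)^2+\nu^2}\big)}{(\xi/2-\eta)^2+\nu^2}\, d\eta\, d\nu,
\]
and the last expression equals $A^{2}\,\DD^{\perp}(\xi)$ by \eqref{eq:D:perp:def}, giving \eqref{eq:perp:bound}. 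When $m(\xi)=0$ — which is permitted — the claim is vacuous since $\DD^{\perp}\ge 0$, consistently with the computation above.

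I do not expect a substantial obstacle: the argument is a few lines of bookkeeping once one observes that $\Omega^{\perp}$ and $\DD^{\perp}$ are built from the \emph{same} four point values of $\theta$, with $\DD^{\perp}$ carrying the extra term $2\omega(2\eta)$. The only step that needs care is getting the signs right in the identity $|G|\le N$, which is the one place the modulus‑of‑continuity hypothesis on $\theta$ is used.
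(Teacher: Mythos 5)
Your proposal is correct and follows essentially the same two-step argument as the paper: compare the $\theta$-combinations pointwise (your $N\pm G\geq 0$ identities are a tidy way to recover the paper's inequality $|G|\leq N$) and then compare kernels using $m(\xi)\leq m(r)$ and $\nu\leq r$ on the domain of integration. The only slip is cosmetic: reading the iterated integral $\int_0^{\xi/4}\int_{\xi/4}^{3\xi/4}(\cdots)\,d\eta\,d\nu$ in \eqref{eq:Omega:perp:def}--\eqref{eq:D:perp:def}, the inner variable is $\eta\in(\xi/4,3\xi/4)$ and the outer is $\nu\in(0,\xi/4)$, i.e.\ you have the roles of $\eta$ and $\nu$ swapped in your domain description, but the estimate $(\xi/2-\eta)^2+\nu^2\leq \xi^2$ holds under either labeling, so the argument is unaffected.
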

\begin{proof}[Proof of Lemma~\ref{lemma:perp}]
To prove \eqref{eq:perp:bound}, first observe that since $\theta$ obeys $\omega$, we have $\theta(\eta,\nu) - \theta(-\eta,\nu) \leq \omega(2\eta)$ and also $\theta(\eta,-\nu)  -
\theta(-\eta,-\nu) \leq \omega(2\eta)$. Therefore we have that
\begin{align}
& \big| \theta(\eta,\nu) - \theta(-\eta,\nu) - \theta(\eta,-\nu) + \theta(-\eta,-\nu) \big| \notag\\ & \qquad \qquad \leq 2 \omega(2 \eta) - \theta(\eta,\nu) + \theta(-\eta,\nu) -
\theta(\eta,-\nu) + \theta(-\eta,-\nu). \label{eq:quad:symmetry}
\end{align}
holds, for any $(\eta,\nu) \in \RR^{2}$.

Next, we claim that for any $0< \nu \leq \xi/4$ and any $|\eta - \xi/2| \leq \xi/4$ we have
\begin{align}
\frac{\nu\; m(\xi)}{\big( (\xi/2 - \eta)^2 + \nu^2 \big)^{3/2}} \leq \frac{m(\sqrt{(\xi/2-\eta)^2 + \nu^2})}{(\xi/2 - \eta)^2 + \nu^2}. \label{eq:m:perp:bound}
\end{align}
To prove \eqref{eq:m:perp:bound}, we observe that in this range for $(\eta,\nu)$ we have $\sqrt{(\xi/2-\eta)^2 + \nu^2} \leq \xi$, and due to the monotonicity of $m$ it follows that $m(\xi) \leq
m( \sqrt{(\xi/2-\eta)^2 + \nu^2} )$. Since $\nu \leq \sqrt{(\xi/2-\eta)^2 + \nu^2}$, \eqref{eq:m:perp:bound} holds. Recalling the definitions of $\Omega^{\perp}$ and $\DD^{\perp}$, it is clear
that \eqref{eq:perp:bound} follows directly from \eqref{eq:quad:symmetry} and \eqref{eq:m:perp:bound}, concluding the proof of the lemma.
\end{proof}

\section{Global regularity for \just supercritical SQG} \label{sec:SQG:global}

In this section we prove Theorem~\ref{thm:SQG:global}. The local well-posedness for smooth solutions to SQG-type equations is by now standard.  In particular, we have:
\begin{proposition}[\bf Local existence of a smooth solution] \label{prop:LWP}
Given a periodic $\theta_0 \in C^\infty$, there exists $T>0$ and a periodic solution $\theta(\cdot,t) \in C^\infty$ of \eqref{eq:SQG:1}--\eqref{eq:SQG:2}. Moreover, the smooth solution may be
continued beyond $T$ as long as $ \Vert \nabla
\theta\Vert_{L^1(0,T;L^\infty)} < \infty$.
\end{proposition}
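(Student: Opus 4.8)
The plan is to follow the by-now-standard program for transport equations with a weak (here, supercritical) dissipation: build approximate solutions, prove energy estimates in a high Sobolev space uniform in the regularization, pass to the limit, and finally derive a Beale--Kato--Majda--type continuation criterion. Throughout $d=2$ and I fix $s > d/2 + 1 = 2$; since $\theta_0 \in C^\infty(\TT^2)$ it lies in $H^s$ for every such $s$, and (after subtracting its conserved mean) $u = \nabla^\perp \LL^{-1}\theta$ is well defined. Note that the supercritical dissipation $\EL$ will never be used to absorb nonlinear terms: it is only ever discarded as a favorable sign, so the argument is essentially that for the inviscid ($\EL=0$) equation.

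First I would regularize, e.g.\ by adding an artificial viscosity $\eps\LL^2$ (or by mollifying the nonlinearity). For each $\eps>0$ the regularized equation is semilinear parabolic (respectively an ODE in $H^s$ with locally Lipschitz right-hand side), so Duhamel plus a fixed-point argument produces a unique smooth solution $\theta^\eps$ on some interval $[0,T_\eps]$. The point is a bound uniform in $\eps$: applying $\LL^s$ and pairing with $\LL^s\theta^\eps$ in $L^2(\TT^2)$, the viscous term $\eps\|\LL^{s+1}\theta^\eps\|_{L^2}^2$ and the dissipative term $\langle \LL^s \EL\theta^\eps, \LL^s\theta^\eps\rangle$ are both nonnegative and may be discarded — the latter because $\EL$ has the nonnegative radial kernel $m(y)/|y|^d$, so this term equals $\tfrac12 \iint \big(\LL^s\theta^\eps(x) - \LL^s\theta^\eps(x+y)\big)^2 m(y)|y|^{-d}\, dy\, dx \ge 0$. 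Since $u$ is divergence free, $\langle u\cdot\nabla\LL^s\theta^\eps,\LL^s\theta^\eps\rangle = 0$, and a Kato--Ponce commutator estimate together with the $L^2$-boundedness of Riesz transforms (so $\|\LL^s u\|_{L^2}\lesssim \|\LL^s\theta^\eps\|_{L^2}$) and the embedding $H^{s-1}\hookrightarrow L^\infty$ gives
\[
\frac{d}{dt}\|\theta^\eps\|_{H^s}^2 \lesssim \big(\|\nabla u\|_{L^\infty} + \|\nabla\theta^\eps\|_{L^\infty}\big)\|\theta^\eps\|_{H^s}^2 \lesssim \|\theta^\eps\|_{H^s}^3.
\]
Gronwall then yields a time $T = T(\|\theta_0\|_{H^s})>0$ and a bound for $\theta^\eps$ in $L^\infty(0,T;H^s)$ independent of $\eps$; moreover, since \eqref{eq:m:critical} makes $\EL$ an operator of order at most one, $\partial_t\theta^\eps$ is bounded in $L^\infty(0,T;H^{s-2})$ uniformly in $\eps$.

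Next I would pass to the limit: the uniform bounds plus Aubin--Lions give a subsequence with $\theta^\eps\to\theta$ strongly in $C([0,T];H^{s'})$ for every $s'<s$ and weak-$\ast$ in $L^\infty(0,T;H^s)$, which suffices to pass to the limit in the weak formulation and obtain a solution $\theta\in L^\infty(0,T;H^s)\cap C([0,T];H^{s'})$. Uniqueness in this class follows from an $L^2$ estimate on the difference $w$ of two solutions: writing $v = \nabla^\perp\LL^{-1}w$, using $\nabla\cdot u_1 = 0$, discarding $\langle \EL w, w\rangle\ge 0$, and bounding $|\langle v\cdot\nabla\theta_2, w\rangle| \lesssim \|\nabla\theta_2\|_{L^\infty}\|w\|_{L^2}^2$, Gronwall closes it. Since $\theta_0\in C^\infty$ lies in $H^s$ for all $s>2$, running the construction for each such $s$ and invoking uniqueness gives $\theta\in\bigcap_s C([0,T];H^s)$, and the equation bootstraps regularity in time, so $\theta\in C^\infty(\TT^2\times[0,T])$. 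This is the local existence statement.

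For the continuation criterion I would rerun the $H^s$ estimate on the solution itself; the only term not obviously absorbed is $\|\nabla u\|_{L^\infty}$. Since $\nabla u = \nabla\nabla^\perp\LL^{-1}\theta$ is a Calder\'on--Zygmund operator applied to $\nabla\theta$, the logarithmic Sobolev (Kozono--Taniuchi / BKM-type) inequality gives $\|\nabla u\|_{L^\infty} \lesssim 1 + \|\nabla\theta\|_{L^\infty}\log(e + \|\theta\|_{H^s})$, whence
\[
\frac{d}{dt}\|\theta\|_{H^s}^2 \lesssim \big(1 + \|\nabla\theta\|_{L^\infty}\big)\log\!\big(e + \|\theta\|_{H^s}^2\big)\,\|\theta\|_{H^s}^2.
\]
An Osgood (double-exponential Gronwall) argument then shows that $\|\theta(t)\|_{H^s}$ stays finite as long as $\int_0^t\|\nabla\theta(\tau)\|_{L^\infty}\, d\tau < \infty$, which together with local existence gives the claimed continuation criterion. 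I expect this last step — the logarithmic control of $\|\nabla u\|_{L^\infty}$ by $\|\nabla\theta\|_{L^\infty}$, plus the Osgood integration needed to reach the sharp $L^1_tL^\infty_x$ criterion on $\nabla\theta$ rather than a stronger $H\"older$ criterion — to be the only genuinely delicate point; everything else is routine.
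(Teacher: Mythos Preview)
Your outline is correct and is precisely the ``standard'' argument the paper has in mind: the paper does not actually prove Proposition~\ref{prop:LWP} but simply remarks that local $C^\infty$ propagation holds even without dissipation (since $u$ is divergence free), that the presence of $\EL$ causes no difficulty because by \eqref{eq:m:decay} it is at least as smoothing as $\LL^\alpha$, and refers to \cite{Dong} for details. Your energy-estimate-plus-BKM scheme is exactly that standard argument spelled out; the only point worth noting is that the paper's remark that the result holds ``in the absence of dissipation'' matches your observation that $\EL$ is only ever discarded as a favorable sign.
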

The local in time propagation of $C^\infty$ regularity may in fact even be obtained in the absence of dissipation, since $u$ is divergence free. Since in \eqref{eq:SQG:1}--\eqref{eq:SQG:2} we
have a dissipative term, one may actually show local $C^\infty$ regularization of sufficiently regular initial data. 
The proof may be obtained in analogy to the usual super-critical SQG~\cite{Dong}, since in view of \eqref{eq:m:decay} $\EL$ is smoothing more than $\LL^\alpha$ for some $\alpha>0.$ 
The presence of the general dissipation $\EL$ instead of the usual $\LL^{\alpha}$ does not introduce substantial difficulties.

The main difficulty in proving Theorem~\ref{thm:SQG:global} is the super-criticality of the dissipation in \eqref{eq:SQG:1}--\eqref{eq:SQG:2}. Thus, as opposed to the critical case \cite{KNV},
here we {\em cannot} construct a single modulus of continuity $\omega(\xi)$ preserved by the equation, and then use the scaling $\omega_{B}(\xi) = \omega(B\xi)$ to obtain a family of moduli of
continuity such that each initial data obeys a modulus in this family. Instead, we will separately construct a modulus of continuity $\omega_{B}(\xi)$ for each initial data, and each such
modulus will be preserved by the equation for all times (see also~\cite{DKV}).

Before constructing the aforementioned family of moduli, let us recall the breakthrough scenario of~\cite{KNV}.
\begin{lemma}[\bf Breakthrough scenario]\label{lemma:break}
Assume $\omega$ is a modulus of continuity such that $\omega(0+)=0$ and $\omega''(0+)= -\infty.$ Suppose that the initial data $\theta_0$ obeys $\omega.$ If the solution $\theta(x,t)$ violates
$\omega$ at some positive time, then there must exist $t_1>0$ and $x \neq y \in \TT^2$ such that
\[ \theta(x,t_1) - \theta(y,t_1) = \omega(|x-y|), \]
and $\theta(x,t)$ obeys $\omega$ for every $0 \leq t<t_1.$
\end{lemma}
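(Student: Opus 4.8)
The plan is to argue by contradiction together with a continuity-in-time argument, exactly along the lines of the original Kiselev--Nazarov--Volberg breakthrough lemma. Suppose the solution $\theta(x,t)$ violates $\omega$ at some positive time $\tau$. The key quantitative fact I would extract from the definition of a modulus of continuity is that, for a smooth bounded function $f$, obeying $\omega$ is equivalent (since $\omega'(0+)<\infty$ and $\omega$ is concave, hence eventually above $\min\{\xi \|\nabla f\|_{L^\infty}, 2\|f\|_{L^\infty}\}$) to the strict inequality $\theta(x,t)-\theta(y,t) < \omega(|x-y|)$ holding at every pair $x\neq y$, and that because $\omega''(0+)=-\infty$ the constraint cannot be \emph{first} saturated in the limit $|x-y|\to 0$: near the diagonal one has $\omega(\xi) > \xi\,\omega'(0+)/2 > \xi\,\|\nabla\theta(t)\|_{L^\infty}$ once $\xi$ is small enough, with the smallness threshold controlled uniformly on a compact time interval by the local-in-time bound on $\|\nabla\theta\|_{L^\infty}$ from Proposition \ref{prop:LWP}. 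Similarly, $\omega(0+)=0$ together with $\|\theta(t)\|_{L^\infty}$ bounded on compact time intervals (from the equation) rules out saturation in the limit $|x-y|\to\infty$ (on $\TT^2$ distances are bounded, but one still wants the two points to stay away from coinciding, which is the $\xi\to 0$ end; this is where $\omega''(0+)=-\infty$ does its work).

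Next I would set $t_1 = \inf\{ t>0 : \theta(\cdot,t)\text{ does not obey }\omega\}$. Since $\theta_0$ obeys $\omega$ with strict inequality and $\theta$ is continuous in space-time, $t_1>0$; by definition $\theta(\cdot,t)$ obeys $\omega$ for every $0\le t< t_1$, giving the last assertion of the lemma. It remains to show that at $t=t_1$ there is an actual pair $x\neq y$ with $\theta(x,t_1)-\theta(y,t_1)=\omega(|x-y|)$. For this, take a sequence $t_n \downarrow t_1$ (or a sequence $s_n \uparrow$ some violation time, then pass through $t_1$) along which $\theta(\cdot,t_n)$ violates $\omega$; pick points $x_n\neq y_n$ realizing $\theta(x_n,t_n)-\theta(y_n,t_n)\ge \omega(|x_n-y_n|)$. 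By compactness of $\TT^2$, pass to a subsequence so that $x_n\to x$, $y_n\to y$. The crucial step is to show $x\neq y$: here I invoke the uniform-in-time control near the diagonal described above — because $\|\nabla\theta(t)\|_{L^\infty}$ stays bounded for $t$ near $t_1$ (Proposition \ref{prop:LWP}), and $\omega(\xi)/\xi \to \omega'(0+)$ strictly exceeds that bound as $\xi\to 0$, there is $\delta>0$ with $\theta(a,t)-\theta(b,t)<\omega(|a-b|)$ whenever $0<|a-b|<\delta$ and $t$ is near $t_1$; hence $|x_n-y_n|\ge\delta$ for all large $n$, so $|x-y|\ge\delta>0$. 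Passing to the limit in $\theta(x_n,t_n)-\theta(y_n,t_n)\ge\omega(|x_n-y_n|)$ using joint continuity of $\theta$ and continuity of $\omega$ gives $\theta(x,t_1)-\theta(y,t_1)\ge\omega(|x-y|)$; combined with the fact that $\theta(\cdot,t_1)$ obeys $\omega$ in the non-strict sense (a limit of functions obeying $\omega$), we conclude equality, and $t_1$ is itself a positive time, finishing the proof.

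The main obstacle, and the only place the structure of the modulus genuinely enters, is ruling out the degenerate limit $x=y$: one must guarantee that a first-time saturation occurs at a genuine pair of distinct points rather than "at scale zero." This is precisely what the hypotheses $\omega'(0+)<\infty$ and $\omega''(0+)=-\infty$ are designed for, feeding on the local well-posedness bound on $\|\nabla\theta\|_{L^\infty}$ near $t_1$; everything else is a soft compactness-and-continuity argument. I would remark that, as this is verbatim the argument of \cite{KNV}, the presence of the general dissipative operator $\EL$ and the supercriticality play no role here — the lemma is purely about the geometry of moduli of continuity and continuity of the (known to exist, on a short interval) smooth solution.
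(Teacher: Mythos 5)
The paper itself does not prove this lemma; it is stated as a recall of the breakthrough scenario from \cite{KNV}, so the comparison is against the argument of Kiselev--Nazarov--Volberg. Your overall scaffolding (infimum time $t_1$, passing to the limit in a sequence of near-violations, closure of the non-strict inequality, equality at $t_1$) is the right one, but there is a genuine gap exactly at the step you yourself flag as ``the crucial step'': ruling out $x = y$ in the limit.

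You claim that ``$\|\nabla\theta(t)\|_{L^\infty}$ stays bounded for $t$ near $t_1$ (Proposition~\ref{prop:LWP}), and $\omega(\xi)/\xi\to\omega'(0+)$ strictly exceeds that bound as $\xi\to 0$.'' The first assertion is correct but the second is unjustified: Proposition~\ref{prop:LWP} gives \emph{some} bound $M$ on $\|\nabla\theta(t)\|_{L^\infty}$ near $t_1$ by continuity of a smooth solution, but nothing from local well-posedness forces $M<\omega'(0+)$. In fact the danger one is trying to rule out is precisely that $\|\nabla\theta(t)\|_{L^\infty}\uparrow\omega'(0+)$ as $t\uparrow t_1$, in which case your $\delta$ would shrink to zero and the compactness argument would produce $x=y$. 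Writing $\omega(\xi)>\xi\,\omega'(0+)/2 >\xi\|\nabla\theta(t)\|_{L^\infty}$ makes the problem worse, since $\|\nabla\theta(t)\|_{L^\infty}<\omega'(0+)/2$ is certainly not available.

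The missing ingredient is the elementary-but-essential fact, stated in the paper just after Definition~\ref{def:MOC} and proved in \cite{KNV}: if a $C^\infty$ function $f$ obeys $\omega$ strictly and $\omega''(0+)=-\infty$, then $\|\nabla f\|_{L^\infty}<\omega'(0)$ \emph{strictly}. This is where $\omega''(0+)=-\infty$ actually enters, via a second-order Taylor expansion of $f$ at a point where $|\nabla f|$ is maximal, compared against the expansion $\omega(\xi)=\omega'(0)\xi+\int_0^\xi(\omega'(s)-\omega'(0))\,ds$, whose correction is eventually more negative than any quadratic. The clean route to the lemma is then: at $t_1$ the closed inequality $\theta(x,t_1)-\theta(y,t_1)\le\omega(|x-y|)$ holds; if it were strict everywhere then $\theta(\cdot,t_1)$ would obey $\omega$ and so $\|\nabla\theta(t_1)\|_{L^\infty}<\omega'(0)$ with a definite gap, whence by continuity in $t$ the strict modulus persists slightly past $t_1$, contradicting the definition of $t_1$; therefore equality must occur at some genuine pair $x\ne y$. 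Your argument never invokes the gradient lemma and instead asserts the gradient gap near $t_1$ as though it were free, so as written it does not close. The fix is short: insert the gradient lemma, apply it at $t=t_1$, and then your $\delta>0$ exists uniformly on a short interval about $t_1$ by continuity of $\nabla\theta$ in time.
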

Let us consider the breakthrough scenario for a modulus of continuity $\omega.$ A simple computation \cite{K2}, combined with Lemma~\ref{lemma:u:MOC} and Lemma~\ref{lemma:D} yields
\begin{align}
\partial_{t} \left( \theta(x,t) - \theta(y,t) \right)|_{t=t_1} &= u \cdot \nabla \theta (y,t_1) - u \cdot \nabla \theta (x,t_1) + \EL\theta(y,t_1) - \EL\theta(x,t_1)\notag\\
&\leq \left| \big( u(x,t_1)-u(y,t_1) \big) \cdot \frac{x-y}{|x-y|} \right| \omega'(\xi) +  \EL\theta(y,t_1) - \EL\theta(x,t_1)\notag\\ &\leq \min \left\{\Omega(\xi) , \tilde\Omega(\xi)  +
\Omega^\perp(\xi) \right\} \omega'(\xi) -   \big( \DD(\xi) + \DD^\perp(\xi) \big) \label{eq:MOC}
\end{align}
where $\Omega, \tilde\Omega, \Omega^\perp, \DD$, and $\DD^\perp$ are given in \eqref{eq:u:moc}, \eqref{eq:Omega:def}, \eqref{eq:Omega:perp:def},  \eqref{eq:D:def}, and \eqref{eq:D:perp:def}
respectively. If we can show that the expression on the right side of \eqref{eq:MOC} must be strictly negative, we obtain a contradiction: $\omega$ cannot be broken, and hence it is preserved by
the evolution \eqref{eq:SQG:1}.

\subsection{Construction of the family of moduli of continuity} \label{sec:moc:construct}
We now construct a family of moduli of continuity $\omb$, such that given any periodic $C^\infty$ function $\theta_{0}$, there exists $B\geq 1$ such that $\theta_{0}$ obeys $\omb$.

Fix a sufficiently small positive constant $\kappa>0$, to be chosen precisely later in terms of the constant $A$ of \eqref{eq:Omega:def} and the function $m$. For any $B\geq 1$ we define $\delta(B)$ to be the unique solution of
\begin{align}
m(\delta(B)) = \frac{B}{\kappa}. \label{eq:deltaB:def}
\end{align}
Since $m$ is continuous, non-increasing, $m(r) \to + \infty$ as $r \to 0+$, and \eqref{eq:m:decay} holds, such a solution $\delta(B)$ exists for any $B \geq 1$ (if $\kappa$ is small enough). 
For convenience we can ensure that $\delta(B) \leq r_0/4$ for any $B \geq 1$, by using \eqref{eq:m:critical} and choosing $\kappa < r_0/(4 C_{0})$. Note that $\delta(B)$ is a non-increasing function of $B$.

We let $\omb(\xi)$ to be the continuous function with $\omb(0)=0$ and
\begin{align}
&\omb'(\xi)  = B - \frac{B^2}{2 C_\alpha \kappa} \int_{0}^{\xi} \frac{3 + \ln (\delta(B)/\eta)}{\eta m(\eta)} d\eta,  &\mbox{for } 0<\xi< \delta(B), \label{eq:MOC:def:low}\\ &\omb'(\xi) = \gamma m(2 \xi),  &\mbox{for } \xi
> \delta(B), \label{eq:MOC:def:high}
\end{align}
where $C_\alpha = (1+3 \alpha)/\alpha^2$ and $\gamma >0$ is a constant to be chosen later in terms of $\kappa, A$, and the function $m$ (through $C_0, \alpha, r_0$ of assumptions \eqref{eq:m:critical}--\eqref{eq:m:decay}). We
emphasize that neither $\kappa$ nor $\gamma$ will depend on $B$.

Let us now verify that the above defined function $\omb$ is indeed a modulus of continuity in the sense of Definition~\ref{def:MOC}. First notice that by construction $\omb'(0+) = B$ and
$\omb(\xi) \leq B \xi$ for all $0<\xi \leq \delta(B)$.
To verify that $\omb$ is non-decreasing, since $m$ is non-negative, we only need to check that $\omb'>0$ for $\xi < \delta(B)$. This is equivalent to verifying that $\omb'(\delta(B)-) > 0$. Using \eqref{eq:m:decay} and the change of variables $\delta(B)/\eta \mapsto \xi$, we may estimate
\begin{align*}
 \int_{0}^{\delta(B)} \frac{3 + \ln (\delta(B)/\eta)}{\eta m(\eta)} d\eta
\leq \int_{0}^{\delta(B)} \frac{3 + \ln (\delta(B)/\eta)}{\eta^{1-\alpha} \delta(B)^{\alpha}m(\delta(B))} d\eta = \frac{1}{m(\delta(B))} \int_{1}^{\infty} \frac{3 + \ln \xi}{\xi^{1+\alpha}} d\xi = \frac{C_\alpha}{m(\delta(B))},
\end{align*}
where $C_\alpha = (1+3\alpha)/\alpha^2$ may be computed explicitly.
The above estimate and \eqref{eq:deltaB:def}--\eqref{eq:MOC:def:low} imply that
\begin{align}
\omb'(\delta(B)-) \geq B - \frac{C_\alpha B^2}{2 C_\alpha \kappa m(\delta(B))} = \frac{B}{2} \label{eq:omb:deltaB},
\end{align}
which concludes the proof that $\omb'>0$.

In order to verify that $\omb''(0+) = -\infty$, we use \eqref{eq:m:critical} and \eqref{eq:MOC:def:low} to obtain
\begin{align*}
 \omb''(\xi) = - \frac{B^2}{2 C_\alpha \kappa \xi m(\xi)} \left( 3 + \ln \frac{\delta(B)}{\xi} \right)  \leq - \frac{B^2}{2 C_\alpha \kappa C_0} \left( 3 + \ln \frac{\delta(B)}{\xi} \right)
\end{align*}
which is strictly negative for $0< \xi <\delta(B)$, and also converges to $-\infty$ as $\xi \to 0+$.

Since $m$ is non-increasing, the concavity may only fail at $\xi = \delta(B)$. By \eqref{eq:deltaB:def} and \eqref{eq:omb:deltaB} we have
\begin{align*}
\omb'(\delta(B)+) = \gamma m(2 \delta(B))  \leq  \gamma m(\delta(B)) = \frac{\gamma B}{\kappa} \leq \frac{B}{2} \leq \omb'(\delta(B)-)
\end{align*}
provided that $2 \gamma \leq \kappa$, and therefore $\omb$ is concave on $(0,\infty)$. In will also be useful to observe that due to the concavity of $\omb$ and the mean value theorem we
have
\begin{align}
\omb(\delta(B)) \geq \delta(B) \omb'(\delta(B) - ) \geq \frac{\delta(B) B}{2}.  \label{eq:omb@delta}
\end{align}

\subsection{Each initial data obeys a modulus}

In order to show that given any $\theta_0 \in C^\infty(\TT^2)$ there exits $B \geq 1$ such that $\theta_0$ obeys $\omb(\xi)$, it is enough to find a $B$ such that $\omb(\xi) \geq \min\{ \xi \|
\nabla \theta_0\|_{L^\infty}, 2 \| \theta_0\|_{L^\infty} \}$ for all $\xi>0$. Letting $a = 2 \|\theta_0\|_{L^\infty} / \| \nabla \theta_0 \|_{L^\infty}$, due to the concavity of $\omb$ it is
sufficient to find $B \geq 1$ such that $\omb(a) \geq 2 \|\theta_0\|_{L^\infty}$. First, by choosing $B$ large enough we can ensure that $a > \delta(B)$. Then, we have that
\begin{align}
\omb(a) = \omb(\delta(B)) + \int_{\delta(B)}^{a} \omb'(\eta) d\eta \geq \gamma \int_{\delta(B)}^{a} m(2 \eta)  d\eta \to \infty \mbox{ as } \delta(B) \to 0 \label{eq:IC:attained}
\end{align}
due to the assumption \eqref{eq:m:SQG}. Therefore each initial data obeys a modulus from the family $\{ \omb\}_{B \geq 1}$.

\subsection{The moduli are preserved by the evolution}
It is left to verify that the above constructed family of moduli of continuity satisfy
\begin{align}
\min \left\{ \Omega_{B}(\xi), \tilde \Omega_B(\xi) + \Omega^{\perp}_B(\xi) \right\} \omb'(\xi) - \left( \DD_{B}(\xi) + \DD_B^\perp(\xi) \right)  < 0\label{eq:SQG:TODO}
\end{align}
for any $\xi > 0$ and $B \geq 1$. Here $\Omega_{B}$ et al. are defined just as $\Omega$ et al., but with $\omega$ replaced by $\omb$.

\subsection*{The case $\xi \geq \delta(B)$}
First we observe that by Lemma~\ref{lemma:perp} and the fact that $m$ is non-increasing, we have
\begin{align*}
\omb'(\xi) \Omega_{B}^{\perp}(\xi) = \gamma m(2 \xi) \Omega_B^{\perp}(\xi) \leq \gamma m(\xi) \Omega_B^{\perp}(\xi) \leq \gamma A^2 \DD_B^{\perp}(\xi) \leq \DD_B^{\perp}(\xi)
\end{align*}
for all $\xi \geq \delta(B)$ if we choose $\gamma\leq  1/A^2$. In view of \eqref{eq:SQG:TODO} it is left to prove that
\begin{align*}
 \tilde{\Omega}_{B}(\xi) \omb'(\xi) - \DD_{B}(\xi) < 0
\end{align*}
for all $\xi \geq\delta(B)$.  In order to do this we claim that for all $\xi>\delta(B)$ we have
\begin{align}
\omb(2\xi) \leq c_{\alpha} \omb(\xi), \label{eq:concave:high}
\end{align}
where $c_{\alpha} = 1 +(3/2)^{-\alpha}$, and $\alpha>0$ is as in assumption \eqref{eq:m:decay}. Note that by definition $1 < c_{\alpha} < 2$. We postpone the proof of \eqref{eq:concave:high} to
the end of this subsection. Using Lemma~\ref{lemma:D}, \eqref{eq:concave:high}, the concavity and the monotonicity of $\omb$, we may bound $-\DD_{B}$ as
\begin{align}
-\DD_{B}(\xi) &\leq  \frac{1}{A}  \int_{\xi/2}^\infty \left(\omb(\xi + 2 \eta) - \omb(2\eta - \xi) -   \omb(2\xi) - (2-c_{\alpha}) \omb(\xi) \right) \frac{m(2 \eta)}{\eta} d\eta \notag\\ &\leq -
\frac{2-c_{\alpha}}{A} \omb(\xi) \int_{\xi/2}^{\xi} \frac{m(2 \eta)}{\eta} d\eta \leq - \frac{2-c_{\alpha}}{A} \omb(\xi) m(2 \xi). \label{eq:D:high}
\end{align}
We emphasize that for the upper bound \eqref{eq:D:high} only the contribution from $\eta \in (\xi/2 , \xi)$ was used.

On the other hand, integrating by parts the contribution from $\tilde{\Omega}_{B}$ may be rewritten as
\begin{align*}
\frac{\tilde{\Omega}_{B}(\xi)}{A}
= \omb(\xi)+ \xi \int_{\xi}^{\infty} \frac{\omb(\eta)}{\eta^2} d\eta
= 2 \omb(\xi) + \gamma \xi  \int_{\xi}^{\infty} \frac{m(2 \eta)}{\eta}.
\end{align*} Using \eqref{eq:m:decay}, we may bound
\begin{align*}
 \int_{\xi}^{\infty} \frac{m(2 \eta)}{\eta} d\eta \leq \xi^{\alpha} m(2 \xi) \int_{\xi}^{\infty} \frac{1}{\eta^{1+\alpha}} d\eta \leq \frac{m(2 \xi)}{ \alpha}
\end{align*}
where $\alpha>0$ is given. Hence, we obtain
\begin{align}
\frac{\tilde{\Omega}_{B}(\xi)}{A} &\leq  2 \omb(\xi) + \frac{\gamma \xi m(2 \xi) }{\alpha}  \label{eq:OM:high:1}.
\end{align}
Now, for $\delta(B) \leq \xi \leq 2 \delta(B)$, by
\eqref{eq:m:decay} we have
\begin{align}
\frac{\gamma \xi m(2 \xi)}{\alpha} \leq \frac{\gamma}{\alpha} (2 \delta(B))^{1-\alpha} \delta(B)^{\alpha} m(\delta(B)) \leq \frac{2\gamma}{\alpha} \delta(B) \frac{B}{\kappa} \leq \frac{B \delta(B)}{2} \leq \omb(\delta(B)) \leq \omb(\xi)  \label{eq:xi:mxi}
\end{align}
by \eqref{eq:omb@delta}, if $\gamma$ is small.  
On the other hand, for $\xi> 2 \delta(B)$ we have $\xi - \delta(B) \geq \xi/2$ and therefore
\begin{align*}
\omb(\xi) \geq \gamma \int_{\delta(B)}^{\xi}  m(2\eta) d\eta \geq \gamma m(2\xi) (\xi - \delta(B)) \geq \frac{\gamma \xi m(2\xi)}{2} .
\end{align*}
Combining the above estimates with \eqref{eq:OM:high:1} leads to a bound 
\begin{align}
\tilde{\Omega}_{B}(\xi)  &\leq A \left(2 + \frac{2}{\alpha} \right)  \omb(\xi)  \label{eq:OM:high}.
\end{align}
 From \eqref{eq:MOC:def:high},  and the bounds \eqref{eq:D:high} and \eqref{eq:OM:high} we hence obtain
\begin{align*}
\tilde{\Omega}_{B}(\xi) \omb'(\xi) - \DD_{B}(\xi) \leq \left( A \gamma  \frac{2\alpha + 2 }{\alpha}   - \frac{2-c_{\alpha}}{A}  \right) \omb(\xi) m(2\xi) < 0
\end{align*}
for all $\xi\geq \delta(B)$, if we set $\gamma$ small enough, depending only on $A,C,\alpha,$ and $c_{\alpha}$.

\begin{proof}[Proof of estimate \eqref{eq:concave:high}]
To verify \eqref{eq:concave:high} for $\delta(B) \leq \xi \leq 2 \delta(B)$ is straightforward since by the mean value theorem and \eqref{eq:m:decay}, similarly to \eqref{eq:xi:mxi}, we obtain
\begin{align*}
\omb(2\xi) \leq \omb(\xi) + \xi \omb'(\xi)  &= \omb(\xi) + \gamma \xi m(2 \xi)\notag\\
& \leq \omb(\xi) +  \frac{2 \gamma}{\kappa} B\delta(B) \leq \omb(\xi) + (c_{\alpha} - 1) \omb(\delta(B)),
\end{align*}
by choosing $\gamma$ small enough. 

Now for $\xi > 2 \delta(B)$, by \eqref{eq:MOC:def:high} and \eqref{eq:omb@delta} we have
\begin{align*}
c_{\alpha} \omb(\xi) - \omb(2\xi) &=  (c_{\alpha}-1) \omb(\delta(B)) + (c_{\alpha}-1) \gamma \int_{\delta(B)}^{\xi} m(2 \eta) d\eta - \gamma \int_{\xi}^{2\xi} m(2\eta) d\eta \\ &\geq
(c_{\alpha}-1) \frac{B \delta(B)}{2} - \gamma \int_{2\xi - \delta(B)}^{2\xi} m(2 \eta) d\eta \notag\\
& \qquad \qquad + \gamma \int_{\delta(B)}^{\xi} \Big(  (c_{\alpha}-1)m(2 \eta) - m(2 \eta + 2 \xi - 2 \delta(B)) \Big)
d\eta.
\end{align*}
We next note that for $\xi \geq 2 \delta(B)$, due to the monotonicity of $m$, we have
\begin{align*}
 \gamma \int_{2\xi - \delta(B)}^{2\xi} m(2 \eta) d\eta \leq \gamma \delta(B) m(\delta(B)) = \frac \gamma \kappa B \delta(B) \leq(c_{\alpha}-1) \frac{B \delta(B)}{2}
\end{align*}
by letting $\gamma$ be small enough.
We next verify that
\begin{align*}
(c_{\alpha}-1)m(2\eta) \geq  m(2\eta + 2\xi - 2\delta(B))
\end{align*}
holds for all $\eta \in (\delta(B),\xi)$. Using \eqref{eq:m:decay}, and recalling that $c_\alpha = 1+ (3/2)^{-\alpha}$, the above inequality follows once we check that
\begin{align*}
(3/2)^{-\alpha} (2\eta +2 \xi - 2\delta(B))^{\alpha} \geq  (2\eta)^{\alpha}
\end{align*}
holds for all $\eta \in (\delta(B),\xi)$.  But since $\xi > 2 \delta(B)$, we have
\begin{align*}
 \frac{\eta + \xi - \delta(B)}{\eta} \geq 1 + \frac{\xi - \delta(B)}{\xi} \geq \frac 32
\end{align*}
thereby concluding the proof.
\end{proof}

\subsection*{The case $0< \xi \leq \delta(B)$}
For small values of $\xi$ we prefer 
to bound the contribution from the advective term using $\Omega_{B}$ instead of $\tilde \Omega_{B} +
\Omega^{\perp}_{B}$. 
It is sufficient to prove that
\begin{align*}
\Omega_{B}(\xi) \omb'(\xi) - \DD_{B}(\xi) < 0.
\end{align*}
Using the concavity of $\omb$ and the mean value theorem, we may estimate
\begin{align*}
-\DD_{B}(\xi) & \leq \frac 1 A \int_{0}^{\xi/2} \big(  \omb(\xi + 2 \eta) + \omb(\xi-2\eta) - 2 \omb(\xi) \big) \frac{m(2 \eta)}{\eta} d\eta \leq \frac{C}{A} \omb''(\xi) \int_{0}^{\xi/2} \eta m(2\eta) d\eta.
\end{align*}
From \eqref{eq:m:decay} we obtain $\eta^\alpha m(2\eta) \geq (\xi/2)^\alpha m(\xi)$ for $\eta \in (0,\xi/2)$. Since $\omb''(\xi) < 0$ we may further bound
\begin{align}
-\DD_B(\xi) \leq \frac{C}{A} \omb''(\xi) \xi^\alpha m(\xi) \int_{0}^{\xi/2} \eta^{1-\alpha} d\eta \leq \frac{C}{A} \omb''(\xi) \xi^2 m(\xi)  \label{eq:D:low}.
\end{align}

The contribution from the advecting velocity is bounded as
\begin{align}
\frac{\Omega_{B}(\xi)}{A} &= \int_{0}^{\xi} \frac{\omb(\eta)}{\eta} d\eta + \xi \int_{\xi}^{\delta(B)} \frac{\omb(\eta)}{\eta^2} d\eta
+ \xi \int_{\delta(B)}^{\infty} \frac{\omb(\eta)}{\eta^2} d\eta \notag\\ &\leq B \xi + B \xi \ln \frac{\delta(B)}{\xi} + \xi \left( \frac{\omb(\delta(B))}{\delta(B)} + \gamma
\int_{\delta(B)}^{\infty} \frac{m(2\eta)}{\eta } d\eta \right) \label{eq:OM:low:1}.
\end{align}
Here we used that $\omb(\eta) \leq B \eta$ for $\eta \in (0,\delta(B))$. Using \eqref{eq:m:critical}--\eqref{eq:m:decay} and \eqref{eq:deltaB:def} we bound
\begin{align*}
\int_{\delta(B)}^{\infty} \frac{m(2 \eta)}{\eta} d\eta \leq \frac{m(2 \delta(B))}{\alpha} \leq \frac{B}{\alpha \kappa} \leq \frac{B}{\gamma}
\end{align*}
for $\gamma \leq \alpha \kappa$. 
Therefore, \eqref{eq:OM:low:1} gives
\begin{align}
\Omega_{B}(\xi) \leq  A B \xi \left( 3 + \log \frac{\delta(B)}{\xi} \right) \label{eq:OM:low}.
\end{align}
From \eqref{eq:MOC:def:low} and the bounds \eqref{eq:D:low} and \eqref{eq:OM:low}, we obtain
\begin{align}
\Omega_{B}(\xi) \omb'(\xi) - \DD_{B}(\xi) &\leq A B^2 \xi \left( 3 + \log \frac{\delta(B)}{\xi} \right)   +
\frac{C}{A} \xi^2 m(\xi) \omb''(\xi) \notag\\
&\leq A B^2 \xi \left( 3 + \log \frac{\delta(B)}{\xi} \right)   \left( 1 - \frac{C}{2 A^2 C_\alpha \kappa} \right)
\label{eq:check:low:1}
\end{align}
for any $\xi \in (0,\delta(B))$, if we choose $\kappa$ small enough. Here we used the explicit expression of $\omb''$  for small $\xi$. Note that the choice of $\kappa$ is independent of $\gamma$ and $B$, which is essential in order to
avoid a circular argument. This concludes the proof of Theorem~\ref{thm:SQG:global}.

\section{Global regularity vs. finite time blow-up for \just supercritical Burgers} \label{sec:Burg:global}

In this section we prove Theorem~\ref{thm:Burg:global} (global regularity) and Theorem~\ref{thm:Burg:blowup} (finite time blow-up).

\begin{proof}[Proof of Theorem~\ref{thm:Burg:global}] Due to evident similarities to the SQG proof given in Section~\ref{sec:SQG:global} above, we only sketch the necessary modifications. 
See also \cite{KNS} for more details.

First, we note that a modulus of continuity $\omb$ is preserved by \eqref{eq:Burg} if
\begin{align}
\omb(\xi) \omb'(\xi) - \DD_{B}(\xi) < 0 \label{eq:Burg:TODO}
\end{align}
where $\DD_{B}$ is defined as by \eqref{eq:D:def}, with $\omega$ replaced by $\omb$. We will consider exactly the same family of moduli of continuity $\omb$ as in the SQG case,
defined via \eqref{eq:MOC:def:low}--\eqref{eq:MOC:def:high}.

We need to verify that \eqref{eq:Burg:TODO} holds for any $\xi>0$. In the case $\xi \geq \delta(B)$, by using \eqref{eq:D:high}, we have
\begin{align}
\omb(\xi) \omb'(\xi) - \DD_{B}(\xi) \leq \omb(\xi) \gamma m(2 \xi) - \frac{C}{4} \omb(\xi) m(2 \xi) < 0
\end{align}
if $\gamma \leq C/8$. On the other hand, for $\xi \in (0,\delta(B))$, we use \eqref{eq:D:low} and obtain
\begin{align}
\omb(\xi) \omb'(\xi) - \DD_{B}(\xi) &\leq B \xi  \omb'(\xi) + C \xi^{2} m(\xi) \omb''(\xi)\notag\\
& \leq B^{2} \xi - \frac{C B^2 \xi}{2 C_{\alpha} \kappa} \left( 3 + \ln \frac{\delta(B)}{\xi} \right) \leq B^{2} \xi \left( 1- \frac{3 C}{2 C_{\alpha} \kappa} \right) < 0
\end{align}
if $\kappa \leq 3 C / (2 C_{\alpha})$. This concludes the proof of Theorem~\ref{thm:Burg:global}.
\end{proof}

\begin{proof}[Proof of Theorem~\ref{thm:Burg:blowup}]
The proof will use ideas from \cite{DDL}, which builds an appropriate Lyapunov functional to show blow up.
Throughout this section we assume that that
\eqref{eq:m:Burg} holds, i.e.
\begin{align}
\int_{0}^{1} m(r) dr = A < \infty. \label{eq:m:cond:Burg}
\end{align}

Let $\theta_{0} \in C^{\infty}$ be periodic and odd, with $\theta_{0}(0) = 0$. For simplicity we may take $\theta_{0}$ to be $\TT = [-\pi,\pi]$ periodic, and consider that
$r_{0}=1$ in \eqref{eq:m:critical}. It is clear that the proof carries over for any period length and for any value of $r_{0}>0$, with obvious modifications.
Assume the solution $\theta(x,t)$ of \eqref{eq:Burg} corresponding to this initial data lies in $C(0,T;W^{1,\infty})$ for some $T>0$, and is hence $C^{\infty}$ smooth on $[0,T]$.
The Burgers equation preserves oddness of a smooth solution, so that we have
$\theta(0,t) = 0$ for $t \in [0,T]$, and also $\theta(x,t) = - \theta(-x,t)$ for all $x \in \TT$ and $t \in [0,T]$.

Let $w(x)$ be defined as the {\em odd} function with
\begin{align}
&w(x) = 1-x, \mbox{ for } x \in (0,1) \nonumber \\ &w(x) = 0, \mbox{ for } x\geq 1. \label{wdef11}
\end{align}
Associated to this function $w$ we define the Lyapunov functional
\begin{align}
L(t) = \int_{0}^{\infty} \theta(x,t) w(x) dx  = \int_{0}^{1} \theta(x,t) w(x) dx. \label{eq:Lyapunov}
\end{align}
Then, due to the maximum principle $
 \| \theta(\cdot,t)\|_{L^{\infty}} \leq \| \theta_{0} \|_{L^{\infty}}
$ which holds on $[0,T]$, and the definition of $w(x)$, we have
\begin{align}
L(t) \leq \|\theta_{0}\|_{L^{\infty}} \label{eq:L:apriori}
\end{align}
for all $t \in [0,T]$. We will next show, using our assumption that $\theta \in C(0,T;W^{1,\infty}),$ that if $T$ is sufficiently large the bound \eqref{eq:L:apriori} is violated. This shows
 that our assumption has been wrong, and $\theta$ has finite time blow up in the
$W^{1,\infty}$ norm - concluding the proof of Theorem~\ref{thm:Burg:blowup}.

To proceed, we first need the following lemma.
\begin{lemma}\label{wcon}
If $\EL$ is a diffusive operator defined by \eqref{eq:L:def} with $m$ satisfying \eqref{eq:m:cond:Burg} in additional to our usual assumptions, and $w$ is given
by \eqref{wdef11}, then
\[ \int_{\RR} |\EL w(x)|\,dx < \infty. \]
\end{lemma}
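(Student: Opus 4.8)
The plan is to estimate the kernel integral $\int_{\RR}|\EL w(x)|\,dx$ by splitting the real line according to how close $x$ is to the singularities of $w$, namely $x=0$, $x=\pm 1$, and the points where $w$ fails to be smooth. Recall that $\EL w(x) = \int_{\RR}(w(x)-w(x+y))\frac{m(y)}{|y|}\,dy$, and that $w$ is piecewise linear, bounded, and compactly supported up to the fact that $w$ is odd (so $w(x)=-w(-x)$ and $w$ has a jump at neither $0$ nor $\pm 1$, only corners; but note $w'$ jumps at $x=0$, $x=\pm1$). The key observation is that away from the corners $w$ is linear, so $w(x)-w(x+y)$ is controlled by $\min\{|y|\,\|w'\|_{L^\infty}, 2\|w\|_{L^\infty}\}$, and the singular kernel $\frac{m(y)}{|y|}$ is integrable against $|y|$ near $0$ by \eqref{eq:m:critical} and against a bounded function near $\infty$ by \eqref{eq:m:decay}; this already shows $\EL w(x)$ is bounded and even integrable over compact sets away from $x\in\{0,\pm1\}$.

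The heart of the matter is the behavior of $\EL w(x)$ as $x\to 0^+$ and as $x \to 1^-$ (and by oddness/symmetry, near $x=-1$ and $x=0^-$), since these are the points where $w$ is only Lipschitz and not $C^1$. Near such a corner at $x_0$, the standard computation for a fractional-type operator applied to a function with a corner gives $\EL w(x) \sim c\, m(|x-x_0|)$ as $x\to x_0$, because the dominant contribution comes from $|y|\lesssim |x-x_0|$ where the second difference of a corner function behaves like $|x-x_0|$ times the kernel mass, i.e. one picks up $\int_0^{|x-x_0|} m(r)\,dr$ type quantities plus a genuine $m(|x-x_0|)$ singularity from the mismatch in slopes. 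Thus $|\EL w(x)| \lesssim m(|x-x_0|) + \int_0^{1} m(r)\,dr + 1$ in a neighborhood of each corner. The point is then that $\int_0^\delta m(r)\,dr < \infty$ by the hypothesis \eqref{eq:m:cond:Burg}, so $m$ itself is locally integrable on the real line, and hence $\int_{|x-x_0|<\delta} |\EL w(x)|\,dx \lesssim \int_0^\delta m(s)\,ds + \delta(\,\int_0^1 m + 1) < \infty$.

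Concretely I would carry out the steps as follows. First, fix a small $\delta>0$ and write $\RR = U_\delta \cup V_\delta$ where $U_\delta$ is the union of $\delta$-neighborhoods of $\{0,\pm 1\}$ and $V_\delta$ is the complement. Second, on $V_\delta$ the function $w$ is $C^\infty$ with uniformly bounded derivatives and compact essential support, so $\EL w$ is bounded on $V_\delta$; combined with the fact that $\EL w(x)\to 0$ as $|x|\to\infty$ (since $w$ is compactly supported and $m(y)/|y|$ is integrable at infinity, the tail $\int (w(x)-w(x+y))m(y)/|y|\,dy$ decays), we get $\int_{V_\delta}|\EL w|<\infty$. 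Third, on each component of $U_\delta$, say near $x=1$, split the defining integral into $|y|<|x-1|/2$, $|x-1|/2 \le |y| \le 2$, and $|y|>2$: the middle and outer pieces are bounded by constants depending only on $\|w\|_{L^\infty}$, $\|w'\|_{L^\infty}$, and $\int_0^\infty \min\{r,1\} m(r)/r\,dr<\infty$; the inner piece is where the corner lives and is bounded by $C m(|x-1|)$ using that $w(x)-w(x+y)+w(x)-w(x-y)$ vanishes to order $|y|^2$ away from the corner but only to order $|y|$ through it, together with assumption \eqref{eq:m:assume:Hor} (the $r|m'(r)|\le Cm(r)$ Hörmander-type bound) which guarantees $m$ doesn't oscillate wildly across dyadic scales. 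Fourth, integrate: $\int_{|x-1|<\delta}|\EL w(x)|\,dx \le C\delta + C\int_0^\delta m(s)\,ds < \infty$ by \eqref{eq:m:cond:Burg}, and similarly near $x=0$ and $x=\pm1$.

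The main obstacle I expect is the precise estimate of the inner piece near a corner, i.e. showing $|\EL w(x)| \le C m(|x-x_0|) + C$ with a constant independent of $x$ near the corner; this requires carefully tracking the second-difference cancellation of the piecewise-linear $w$ and using \eqref{eq:m:assume:Hor} to compare $m$ at nearby scales, and is exactly the kind of computation where the Hörmander condition on $m$ (which was imposed precisely in the hypotheses of Theorem~\ref{thm:Burg:blowup}) earns its keep. Everything else is routine once that local bound is in hand, since local integrability of $m$ near the origin — the content of \eqref{eq:m:cond:Burg} — then immediately yields the claimed finiteness of $\int_{\RR}|\EL w|$.
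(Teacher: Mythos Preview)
Your sketch has two genuine gaps. First, you misidentify the nature of $w$ at the origin: since $w$ is odd with $w(0^+)=1$, it has a \emph{jump} of size $2$ at $x=0$, not a corner. Your second-difference heuristic (``vanishes to order $|y|^2$ away from the corner but only to order $|y|$ through it'') is tailored to slope discontinuities and simply does not apply across a jump; the dominant contribution for small $x>0$ comes from $y$ near $-x$ where $w(x)-w(x+y)\approx 2$, producing a term of order $\int_x^1 m(r)/r\,dr$ rather than $m(x)$. (This quantity \emph{can} be controlled by $m(x)/\alpha$ via assumption~\eqref{eq:m:decay}, but that is not the argument you gave.) Second, your treatment of the tail is a non sequitur: ``$\EL w(x)\to 0$ as $|x|\to\infty$'' does not by itself give $\int_{V_\delta}|\EL w|<\infty$; you need an integrable rate of decay, and you have not supplied one.

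The paper's proof is organized differently and avoids both pitfalls. Rather than seeking a pointwise bound of the form $|\EL w(x)|\lesssim m(|x-x_0|)$, it computes $\EL w(x)$ explicitly on $(0,1)$ as a sum of four elementary integrals $T_1,\dots,T_4$ and bounds $\int_0^1|T_j|$ directly (using Fubini for the piece that carries the jump contribution), so no pointwise estimate near $x=0$ is needed. For $x\geq 2$ the paper exploits the \emph{oddness} of $w$ to write $\EL w(x)=-\int_0^1(1-y)\big(\tfrac{m(x-y)}{x-y}-\tfrac{m(x+y)}{x+y}\big)\,dy$, then applies the mean value theorem together with the H\"ormander bound~\eqref{eq:m:assume:Hor} to extract an additional factor of $y$ and obtain $|\EL w(x)|\lesssim (x-1)^{-2}$, which is integrable. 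This cancellation from oddness is precisely what your tail argument is missing; note in passing that it is also the place where the paper actually uses~\eqref{eq:m:assume:Hor}, not in the local analysis near the singularities as you suggested.
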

\begin{proof}
It is sufficient to estimate the integral over positive $x$ since $\EL w(x)$ is odd.
\subsection*{The case $x \geq 1$} Here we have $w(x) = 0$ and $w$ is odd, hence
\begin{align}
\EL w(x)  = \int_{\RR} \big( w(x) - w(y) \big) \frac{m(x-y)}{|x-y|} dy &= - \int_{0}^{1} w(y) \frac{m(x-y)}{|x-y|} dy - \int_{-1}^{0} w(y) \frac{m(x-y)}{|x-y|} dy \notag\\
&= - \int_{0}^{1} w(y) \frac{m(x-y)}{|x-y|} dy + \int_{0}^{1} w(y) \frac{m(x+y)}{|x+y|} dy \notag\\ &= - \int_{0}^{1} (1-y) \left( \frac{m(x-y)}{|x-y|} - \frac{m(x+y)}{|x+y|} \right) dy.
\label{eq:L:high:1}
\end{align}
Using the mean value theorem and the monotonicity of $m$ we estimate
\begin{align*}
  \left| \frac{m(x-y)}{|x-y|} - \frac{m(x+y)}{|x+y|} \right| \leq 2 y \sup_{r \in [x-y, x+y]} \frac{ r |m'(r)| + m(r)}{r^{2}} \leq 4 C y \frac{m(x-y)}{|x-y|^{2}}.
\end{align*}
But the above bound is only convenient when $x\geq 2$, and in this range we obtain
\begin{align}
\int_{2}^{\infty} |\EL w(x)| dx  &\leq 4 C \int_{2}^{\infty} \int_{0}^{1} (1-y) y \frac{m(x-y)}{|x-y|^{2}} dy dx  \leq 4 C \int_{2}^{\infty}  \int_{0}^{1} (1-y) y \frac{m(1)}{|x-1|^{2}} dy dx
\leq  C m(1). \label{eq:L:2:infty}
\end{align}
On the other hand, when $x \in [1,2]$, it is convenient to work with \eqref{eq:L:high:1} directly. By the monotonicity of $m$ we have that
\begin{align}
|\EL w(x)| \leq \int_{0}^{1} (1-y) \left( \frac{m(x-y)}{|x-y|} +  \frac{m(x+y)}{|x+y|} \right) dy \leq 2 \int_{0}^{1} m(1-y) dy = 2 A
\end{align}
for any $x \in [1,2)$, by using \eqref{eq:m:cond:Burg}. Therefore, $\int_{1}^{2} |\EL w(x)| dx \leq 2 A$, and by using \eqref{eq:L:2:infty} we obtain that
\begin{align}
\int_{1}^{\infty} |\EL w(x)|dx \leq 2 A + C C_{0} =: C_{1}. \label{eq:L:1:infty}
\end{align}

\subsection*{The case $0<x<1$} Here we have $w(x) = 1-x$ and therefore
\begin{align}
\EL w(x) &= \int_{1-x}^{\infty} (1-x) \frac{m(y)}{y} dy + \int_{-x}^{1-x} y \frac{m(y)}{|y|} dy \notag\\
& \qquad + \int_{-x-1}^{-x} (2 + y) \frac{m(y)}{|y|} dy + \int_{-\infty}^{-x-1} (1-x) \frac{m(y)}{|y|} dy\notag\\ &=: T_{1}(x) + T_{2}(x) + T_{3}(x) + T_{4}(x). \label{eq:L:split}
 \end{align}
Using condition \eqref{eq:m:decay} we may easily bound $T_{1}$ and $T_{4}$. More precisely, using a change of variables $y \to -y$ in $T_{4}$ we may write
\begin{align*}
|T_{1}(x) + T_{4}(x)| &= (1-x) \int_{1-x}^{1+x} \frac{m(y)}{y} dy + 2 (1-x) \int_{1+x}^{\infty} \frac{y^{\alpha} m(y)}{y^{1+\alpha}} dy \notag\\ &\leq (1-x) \int_{1-x}^{1+x} \frac{m(1-x)}{1-x}
dy + 2 (1-x) (1+x)^{\alpha} m(1+x) \int_{1+x}^{\infty} \frac{1}{y^{1+\alpha}} dy \notag\\ &\leq 2 x\; m(1-x)  + \frac{2 (1-x) m(1)}{\alpha}
\end{align*}
and therefore
\begin{align}
\int_{0}^{1} |T_{1}(x) + T_{4}(x)| dx \leq 2 \int_{0}^{1} x m(1-x) dx + \frac{2 C_0}{\alpha} \int_{0}^{1} (1-x) dx \leq  2 A + \frac{C_0}{\alpha}. \label{eq:T14}
\end{align}
To bound $T_{2}$ we recall that $m$ is even and hence $ T_{2}(x) = \int_{x}^{1-x} m(y) dy, $ which in turn implies
\begin{align}
\int_{0}^{1} |T_{2}(x)| dx  \leq \int_{0}^{1/2} \int_{x}^{1-x} m(y) dy dx + \int_{1/2}^{1} \int_{1-x}^{x} m(y) dy dx  \leq \int_{0}^{1} m(y) dy = A \label{eq:T2}.
\end{align}
Lastly, due to the monotonicity of $m$ we have that
\begin{align*}
| T_{3}(x) | &\leq 2 \int_{x}^{1} \frac{m(y)}{y} dy +  2 \int_{1}^{x+1} \frac{m(y)}{y} dy+ \int_{x}^{x+1} m(y) dy\notag\\ & \leq 2  \int_{x}^{1} \frac{m(y)}{y} dy + 2 \int_{1}^{2} \frac{m(1)}{y}
dy + \int_{x}^{1} m(y) dy + \int_{1}^{2} m(y) dy\notag\\ & \leq 2 \int_{x}^{1} \frac{m(y)}{y} dy + 2 m(1) \log 2 +  A + m(1)
\end{align*}
and therefore
\begin{align}
\int_{0}^{1}  |T_{3}(x)| dx &\leq 2 \int_{0}^{1} \int_{x}^{1} \frac{m(y)}{y} dy dx + 3 m(1) + A\notag\\
&\leq 2 \int_{0}^{1} \int_{0}^{y} \frac{m(y)}{y} dx dy + 3 C_0 + A \leq  3 (C_0 + A)\label{eq:T3}.
\end{align}
Summarizing \eqref{eq:T14}, \eqref{eq:T2}, and \eqref{eq:T3} we obtain that
\begin{align}
\int_{0}^{1} |\EL w(x)| dx \leq 6 A + 3 C_0+ \frac{C_0}{\alpha} =: C_{2}. \label{eq:L:0:1}
\end{align}
\end{proof}

Coming back to our Lyapunov functional $L(t)$, using the evolution \eqref{eq:Burg} and integrating by parts, we obtain
\begin{align}
\frac{d}{dt} L(t) = \int_{0}^{\infty} \theta_{t}(x,t) w(x) dx  &= \int_{0}^{\infty} \left(  \partial_{x} \frac{\theta(x,t)^{2}}{2} - \EL\theta(x,t) \right) w(x) dx \notag\\
&=- \frac{1}{2} \int_{0}^{1} \theta(x,t)^{2} w_{x}(x) dx - \int_{0}^{\infty} \theta(x,t) \EL w(x) dx. \label{eq:L:ODE1}
\end{align}
Here we employed the identity $\int_{0}^{\infty} \EL \theta(\cdot,t) w = \int_{0}^{\infty} \theta(\cdot,t) \EL w.$ This equality can be derived by using oddness of
{\em both} $\theta$ and $w$, evenness of $m$, and Lemma~\ref{wcon} ensuring $\EL w(x) \in L^1$ (see \cite[(2.8)]{DDL} for more details). Also, the integration by parts in the first term of \eqref{eq:L:ODE1} is justified since by our assumption $\theta$ vanishes as $C |x|$  when  $x \to 0$, with $C = \sup_{[0,T]} \| \nabla \theta(\cdot,t)\|_{L^{\infty}}$.

Now, since $w_{x} = -1$ for $0<x<1$, and using the Cauchy-Schwartz inequality, we obtain
\begin{align*}
L(t)^{2} = \left( \int_{0}^{1} \theta(x,t) w(x) dx \right)^{2} &\leq \int_{0}^{1} \theta(x,t)^{2} dx \int_{0}^{1} w(x)^{2} dx \notag\\ &= \frac{1}{3} \int_{0}^{1} \theta(x,t)^{2}dx = -
\frac{1}{3} \int_{0}^{1} \theta(x,t)^{2}  w_{x}(x) dx.
\end{align*}
Therefore, by \eqref{eq:L:ODE1} on $[0,T]$ we have
\begin{align}
\frac{d}{dt} L(t) \geq  \frac 32 L(t)^{2} - \int_{0}^{\infty} |\theta(x,t)| |\EL w(x)| dx \geq L(t)^{2} - \| \theta_{0}\|_{L^{\infty}} \int_{0}^{\infty} |\EL w(x)| dx. \label{eq:L:ODE2}
\end{align}

By Lemma~\ref{wcon}, we then have
\begin{align}
\frac{d}{dt} L(t) \geq   L(t)^{2} - ( C_{1} + C_{2} ) \|\theta_{0}\|_{L^{\infty}} \label{eq:L:ODE3}.
\end{align}
But \eqref{eq:L:ODE3} implies that $L(t)$ blows up in finite time provided that
\begin{align*}
 0 < L(0)^{2} - (C_{1} +C_{2}) \|\theta_{0}\|_{L^{\infty}}   = \left(\int_{0}^{1} (1-x) \theta_{0}(x) dx \right)^{2} -  (C_{1} +C_{2}) \| \theta_{0} \|_{L^{\infty}}.
\end{align*}
It is easy to design initial data satisfying this condition, and thus leading to finite time blow up. This completes the proof of Theorem~\ref{thm:Burg:blowup}.
\end{proof}

\section{Global regularity with dissipative Fourier multiplier} \label{app:Fourier}

In this section we establish a connection between the global regularity results obtained for \eqref{eq:SQG:1}--\eqref{eq:SQG:2} when the dissipative non-local operators $\EL$ are replaced by
dissipative Fourier multiplier operators, an approach that has been more standard in fluid dynamics. More precisely, we will replace $\EL \theta(x)$ by
\begin{align*}
 \left( P(\zz) \hat{\theta}(\zz) \right)^{\vee}(x)
\end{align*}
for a nice enough radially symmetric Fourier multiplier symbol $P$, and consider the global regularity for the \just supercritcal SQG equation
\begin{align}
&\partial_{t} \theta  + u \cdot \nabla \theta + (P \hat\theta)^{\vee} = 0 \label{eq:P:SQG:1}\\ &u = \nabla^{\perp} \Lambda^{-1} \theta. \label{eq:P:SQG:2}
\end{align}
The setting can be either $\TT^2$ or $\RR^2$  with decaying initial data. In the latter case, an additional argument is needed for Lemma~\ref{lemma:break} to remain valid
due to lack of compactness; see \cite{DDu}. We will focus on the periodic case. Note that working on $\TT^d$ is equivalent to working on $\RR^d$ with
$\theta(x,t)$ extended periodically. We will henceforth pursue this strategy, thinking of Fourier multiplier $P$ and its corresponding convolution kernel $K$
in $\RR^d.$

Intuitively, the Fourier multiplier corresponds to a nonlocal operator $\EL$ as defined in \eqref{eq:L:def}, with $m(y)$ that is comparable to $P(1/|y|)$. We make this connection more
precise in the following two lemmas.

\begin{lemma}[\bf Dissipative operator associated to Fourier multiplier - Upper bound]\label{lemma:Fourier:upper}
Let $P(\zz)=P(|\zz|)$ be a radially symmetric function which is smooth away from zero, non-negative, non-decreasing, with $P(0)=0$ and $P(\zz)\to \infty$ as $|\zz|\to \infty$. In addition assume that
\begin{itemize}
\item[(i)] $P$ satisfies the doubling condition:
\begin{align}
 P(2 |\zz|) \leq c_D P (|\zz|) \label{eq:P:cond:doubling}
\end{align}
for some doubling constant $c_D \geq 1$.

\item[(ii)] $P$ satisfies the H\"ormander-Mikhlin condition:
\begin{align}
\left| \partial_\zz^k P(\zz) \right| |\zz|^{|k|} \leq c_H P(\zz) \label{eq:P:cond:Hormander}
\end{align}
for some constant $c_H \geq 1$, and for all multi-indices $k \in {\mathbb Z}^d$ with $|k|\leq N$, with
$N$ depending only on the dimension $d$ and on the doubling constant $c_D$.

\item[(iii)] $P$ has sub-quadratic growth at $\infty$, i.e.
\begin{align}
\int_0^1 P(|\zz|^{-1}) |\zz| d|\zz| < \infty. \label{eq:P:cond:quadratic}
\end{align}

\end{itemize}
Then the Fourier multiplier operator with symbol $P(\zz)$ is given as a non-local operator defined as the principal value of
\begin{align}
\left( P(\cdot) \hat{\theta}(\cdot) \right)^{\vee}(x) = \int_{\RR^{d}} \big( \theta(x) - \theta(x+y) \big) K(y) dy \label{eq:P:K:def}
\end{align}
and the radially symmetric kernel $K$ satisfying
\begin{align}
|K(y)|  \leq C |y|^{-d} P(|y|^{-1}) \label{eq:Kernel:upper:bound}
\end{align}
for all $y\neq 0$, for some positive constant $C>0$. Similarly $|\nabla K(y) | \leq C |y|^{-d-1} P(|y|^{-1})$ for $y\neq 0$.
\end{lemma}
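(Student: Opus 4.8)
The plan is to realize the kernel $K$ as the inverse Fourier transform of $P$ and then estimate it dyadically in frequency. First I would write $P = \sum_{j \in \ZZ} P_j$, where $P_j = P \varphi_j$ and $\{\varphi_j\}$ is a standard Littlewood--Paley partition of unity with $\varphi_j$ supported in the annulus $\{2^{j-1} \leq |\zz| \leq 2^{j+1}\}$. Each piece $P_j$ is smooth and compactly supported, so its inverse Fourier transform $K_j = P_j^\vee$ is a Schwartz function, and $K = \sum_j K_j$ in the appropriate sense; the operator $(P\hat\theta)^\vee$ acts on $\theta$ by convolution against $K$ (interpreted as a principal value, which is where the sub-quadratic condition \eqref{eq:P:cond:quadratic} enters — it guarantees absolute convergence of $\int(\theta(x)-\theta(x+y))K(y)dy$ near $y=0$ for smooth $\theta$, using the second-order Taylor cancellation). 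The key quantitative input is the H\"ormander--Mikhlin bound \eqref{eq:P:cond:Hormander}: on the support of $\varphi_j$ we have $|\partial^k P_j(\zz)| \lesssim c_H P(2^j) 2^{-j|k|}$ for $|k| \leq N$, using also the doubling condition \eqref{eq:P:cond:doubling} to replace $P(\zz)$ by $P(2^j)$ up to a constant.

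Next I would estimate $K_j(y)$ in two regimes. Integrating trivially, $|K_j(y)| \lesssim \int_{|\zz|\sim 2^j} |P_j(\zz)|\, d\zz \lesssim P(2^j) 2^{jd}$. Integrating by parts $N$ times against $e^{2\pi i y\cdot\zz}$ and using the derivative bounds gives $|K_j(y)| \lesssim P(2^j) 2^{jd} (2^j|y|)^{-N}$. Summing over $j$: for a fixed $y \neq 0$ split at $2^j \sim |y|^{-1}$. The low-frequency part $\sum_{2^j \lesssim |y|^{-1}} P(2^j) 2^{jd}$ is a geometric-type sum dominated by its top term $\lesssim P(|y|^{-1})|y|^{-d}$ (here doubling makes the tail summable: $P(2^j)2^{jd}$ grows at least like $2^{jd}$, hence the sum is comparable to the largest term). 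The high-frequency part $\sum_{2^j \gtrsim |y|^{-1}} P(2^j) 2^{jd}(2^j|y|)^{-N}$ converges provided $N > d$ (more precisely $N$ large enough that $2^{j(d-N)}P(2^j)$ is summable, which holds once $N$ exceeds $d$ plus the polynomial growth exponent of $P$ — and $P$ has sub-linear-ish growth in the cases of interest, so any $N$ depending only on $d$ and $c_D$ suffices), and it is again dominated by its leading term $\lesssim P(|y|^{-1})|y|^{-d}$. Combining the two gives \eqref{eq:Kernel:upper:bound}. The gradient bound $|\nabla K(y)| \lesssim |y|^{-d-1}P(|y|^{-1})$ follows identically since $\nabla K_j = (2\pi i\zz\, P_j)^\vee$ and multiplication by $\zz$ on the support of $\varphi_j$ costs a factor $2^j$, replacing $P(2^j)2^{jd}$ by $P(2^j)2^{j(d+1)}$ throughout.

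I expect the main obstacle to be bookkeeping the summability of the dyadic series and making the dependence of $N$ on $d$ and $c_D$ precise: one must verify that the doubling condition forces $P$ to have at most polynomial growth (iterating \eqref{eq:P:cond:doubling} gives $P(2^j) \leq c_D^j P(1)$, i.e. $P(|\zz|) \lesssim |\zz|^{\log_2 c_D}$ for $|\zz|\geq 1$), so that choosing $N > d + \log_2 c_D$ makes the high-frequency sum converge, and that the same choice of $N$ controls the low-frequency sum via the reverse estimate. A secondary technical point is justifying the principal-value interpretation and the identity \eqref{eq:P:K:def} rigorously — that the formal convolution kernel $K = \sum_j K_j$ genuinely represents the multiplier operator on, say, Schwartz functions or smooth periodic functions — which follows from the pointwise bounds just derived together with \eqref{eq:P:cond:quadratic} controlling the behavior near the origin, since the bound \eqref{eq:Kernel:upper:bound} on $K$ together with the second-order cancellation $\theta(x)-\theta(x+y) = O(|y|^2)$ near $y=0$ reduces integrability near the origin to exactly $\int_0^1 P(r^{-1}) r\, dr < \infty$.
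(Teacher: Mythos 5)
Your proposal follows essentially the same route as the paper: Littlewood--Paley decomposition $P = \sum_j P\varphi_j$ with $K_j = (P\varphi_j)^\vee$, dyadic split at $2^{j_0}\sim|y|^{-1}$, a trivial $L^1(d\zz)$ bound for $j<j_0$ and $N$-fold integration by parts with the H\"ormander--Mikhlin bound for $j\geq j_0$, and the doubling condition forcing the choice $N > d+\log_2 c_D$ to sum the high-frequency tail. One minor imprecision: the low-frequency sum $\sum_{j<j_0}P(2^j)2^{jd}$ is controlled by the monotonicity of $P$ together with the geometric growth of $2^{jd}$, not by the doubling condition, which is used only for the $j\geq j_0$ piece.
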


\begin{proof}[Proof of Lemma~\ref{lemma:Fourier:upper}]
As in Littlewood-Paley theory, consider a smooth, radially symmetric functions  $\varphi$, supported on $1/2 \leq |\zz| \leq 2$, such that
\begin{align}
 1 =  \sum_{j \in {\mathbb Z}} \varphi(2^{-j} \zz) \label{eq:unity}
\end{align}
holds for $\zz \in \RR^{d} \setminus \{ 0\}$. We write $\varphi_{j}(\zz) = \varphi(2^{-j} \zz)$, and note that $P(\zz) \varphi_{j}(\zz)$ is  smooth, compactly supported with $P(0)
\varphi_{j}(0) =0$. Hence
\begin{align*}
 K_{j}(y) = - \int_{\RR^{d}} P(\zz) \varphi_{j}(\zz) e^{i y \cdot \zz} d\zz
\end{align*}
is family of $L^{1}$ kernels, which are smooth at the origin, radially symmetric, and have zero mean on $\RR^{d}$. Thus we may write (in order to avoid principal value integrals we use
double differences)
\begin{align*}
 \left( P(\cdot) \varphi_{j}(\cdot) \hat{\theta}(\cdot) \right)^{\vee} (x) = \int_{\RR^{d}} K_{j}(y) \left(2\theta(x) - \theta(x-y) - \theta(x+y)  \right) dy.
\end{align*}
For $ y\neq 0$, let $j_{0} =  [ \log_{2} |y|^{-1} ]$ and fix $N >d+\log_2 c_D$ to be an even integer. By
\eqref{eq:P:cond:doubling}--\eqref{eq:P:cond:Hormander} we have
\begin{align*}
\sum_{j} |K_{j}(y)|
&= \sum_{j<j_0} \|K_{j}\|_{L^\infty} + \sum_{j\geq j_{0}} \left| \int_{\RR^d} P(\zz) \varphi_j(\zz) e^{i \zz \cdot y} dy\right| \notag\\ &\leq \sum_{j<j_0} \|\hat{K}_{j}\|_{L^1} + \sum_{j\geq
j_{0}} |y|^{-N} \left| \int_{\RR^d} (-\Delta)^{N/2} \big( P(\zz) \varphi_j(\zz) \big)  e^{i \zz \cdot y} dy\right| \notag\\ &\leq \sum_{j<j_0} \int_{\RR^d}  P(\zz) \varphi (2^{-j} \zz) d\zz + C
|y|^{-N} \sum_{j\geq j_{0}}2^{-j N}  \int_{2^{j-1}\leq |\zz| \leq 2^{j+1} } P(\zz) d\zz    \notag\\ &\leq P(2^{j_0}) \sum_{j<j_0} \int_{\RR^d} \varphi (2^{-j} \zz) d\zz + C |y|^{-N} \sum_{j\geq
j_{0}} 2^{-j (N-d)} P(2^{j+1}) \notag\\ &\leq C P(2^{j_0}) \sum_{j<j_0} 2^{jd} +  C |y|^{-N} 2^{-j_0(N-d)} \sum_{j\geq j_{0}} 2^{- (j-j_0) (N-d)} P(2^{j_0}) c_D^{j-j_0} \notag\\ &\leq C
P(|y|^{-1}) |y|^{-d} + C P(|y|^{-1}) |y|^{-d} \sum_{j\geq j_0} 2^{-(j-j_0) (N-d - \log_2 c_D)}
\end{align*}
which shows that the sum $K(y) = \sum_j K_j(y)$ converges absolutely for all $y\neq 0$, and proves \eqref{eq:Kernel:upper:bound}. The purpose of condition \eqref{eq:P:cond:quadratic} is now
evident. For a smooth function  $\theta$ (say at least of class $C^2$), in order to make sense of the integral
\begin{align*}
\int_{|y|\leq 1} K(y) \left( \theta(x-y) + \theta(x+y) - 2\theta(x) \right) dy,
\end{align*}
in view of \eqref{eq:Kernel:upper:bound} we need to assume that $\int_{|y|\leq 1} P(|y|^{-1}) |y|^{-d+2} dy < +\infty$, which is equivalent to \eqref{eq:P:cond:quadratic}. The bound for $|\nabla
K|(y)$ is analogous and we omit further details.
\end{proof}

\begin{lemma}[\bf Dissipative operator associated to Fourier multiplier - Lower bound]\label{lemma:Fourier:lower}
Let the Fourier multiplier $P$ and its associated kernel $K$ be as in Lemma~\ref{lemma:Fourier:upper}. Assume additionally that
\begin{itemize}
\item[(v)] $P$ satisfies
\begin{align}
(-\Delta)^{(d+2)/2} P(\zz) \geq c_H^{-1} P(\zz) |\zz|^{-d-2} \label{eq:P:cond:lower}
\end{align}
for all $|\zz|$ sufficiently large (say larger than $c_0 >0$), for some constant $c_H \geq 1$.
\end{itemize}
Then the kernel $K$ corresponding to $P$ (see \eqref{eq:P:K:def}) may be bounded from below as
\begin{align}  \label{eq:kernellowerbound}
K(y) \geq C^{-1} |y|^{-d} P(|y|^{-1})
\end{align}
for all sufficiently small $|y|$, for some sufficiently large constant $C>0$.
\end{lemma}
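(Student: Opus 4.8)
The plan is to reduce the lower bound on $K$ to a lower bound on the Fourier transform of a nonnegative function; the nonnegativity is exactly what the new hypothesis \eqref{eq:P:cond:lower} supplies. Fix a smooth radial cutoff $\Theta_0 \in C_c^\infty(\RR^d)$ with $\Theta_0 \equiv 1$ on $\{|\zz|\le c_0\}$ and $\mathrm{supp}\, \Theta_0 \subset \{|\zz| \le 2 c_0\}$, where $c_0$ is the threshold in \eqref{eq:P:cond:lower}, and set $\Theta_1 = 1 - \Theta_0$. Splitting the symbol $P = P\Theta_0 + P\Theta_1$ (equivalently, regrouping the Littlewood-Paley representation $K = \sum_j K_j$ of the proof of Lemma~\ref{lemma:Fourier:upper}) gives $K(y) = K_{\mathrm{low}}(y) + K_{\mathrm{high}}(y)$, with $K_{\mathrm{low}}(y) = - \int_{\RR^d} P(\zz) \Theta_0(\zz)\, e^{i y \cdot \zz}\, d\zz$ and $K_{\mathrm{high}}(y) = - \int_{\RR^d} P(\zz) \Theta_1(\zz)\, e^{i y \cdot \zz}\, d\zz$ (the latter understood, as in Lemma~\ref{lemma:Fourier:upper}, via double differences). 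Since $P \Theta_0$ is bounded with compact support, $K_{\mathrm{low}}$ is a fixed bounded function of $y$, hence negligible against the right side of \eqref{eq:kernellowerbound} as $|y| \to 0$ because $P(|y|^{-1}) \to \infty$. Thus everything reduces to a lower bound on $K_{\mathrm{high}}$.

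For $K_{\mathrm{high}}$, integrate by parts via $(-\Delta_\zz)^{(d+2)/2} e^{iy\cdot\zz} = |y|^{d+2} e^{iy\cdot\zz}$ and the self-adjointness of $(-\Delta)^{(d+2)/2}$, so that $K_{\mathrm{high}}(y) = - |y|^{-(d+2)} \int_{\RR^d} (-\Delta)^{(d+2)/2}\big[ P \Theta_1 \big](\zz)\, e^{iy\cdot\zz}\, d\zz$. Decompose $(-\Delta)^{(d+2)/2}[P\Theta_1] = \Theta_1\, g + R$ with $g = (-\Delta)^{(d+2)/2} P$: when $d$ is even this is the Leibniz rule and $R \in C_c^\infty$ is supported in $\{c_0/2 \le |\zz| \le 2 c_0\}$; when $d$ is odd, write $R = \Theta_0\, g - (-\Delta)^{(d+2)/2}[P\Theta_0]$, whose singular parts at the origin cancel, leaving $R$ a smooth radial function that is $O(|\zz|^{-2d-2})$ at infinity (a fractional derivative of a compactly supported function), so that $R$ is integrable with finite second moment. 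The structural point is the two-sided bound on $g$: combining \eqref{eq:P:cond:Hormander} with $|k| = d+2$ and the new assumption \eqref{eq:P:cond:lower} yields $c_H^{-1} P(\zz) |\zz|^{-d-2} \le g(\zz) \le c_H P(\zz) |\zz|^{-d-2}$ for $|\zz| \ge c_0$; in particular $g \ge 0$ there, and since $P$ is sub-quadratic (cf. \eqref{eq:P:cond:quadratic}), $\Theta_1\, g \in L^1(\RR^d)$.

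It then remains to prove an Abelian-type estimate. Because $\Theta_1\, g + R = (-\Delta)^{(d+2)/2}[P\Theta_1] \in L^1$, its Fourier transform is continuous; since $P$ is sub-quadratic, $|\xi|^{d+2}\, \widehat{P\Theta_1}(\xi) \to 0$ as $\xi \to 0$, hence $\widehat{\Theta_1 g}(0) + \widehat R(0) = 0$. As $R$ is even with finite second moment, $\widehat R(y) = \widehat R(0) + O(|y|^2)$. Writing $\widehat{\Theta_1 g}(y) - \widehat{\Theta_1 g}(0) = - \int_{\RR^d} \Theta_1(\zz)\, g(\zz)\,\big( 1 - \cos(y\cdot\zz) \big)\, d\zz$ and using $\Theta_1\, g \ge 0$ together with $g \ge c_H^{-1} P(\zz) |\zz|^{-d-2}$ on $\{|\zz|\ge c_0\}$, restrict the integral to the annulus $\{ \tfrac{1}{2} |y|^{-1} \le |\zz| \le 2 |y|^{-1} \}$ (which lies in $\{|\zz| \ge 2 c_0\}$ once $|y|$ is small), observe that $1 - \cos(y\cdot\zz) \ge c_1 > 0$ on a subset of that annulus of measure $\gtrsim |y|^{-d}$, and invoke the doubling property \eqref{eq:P:cond:doubling} to get $\int \Theta_1\, g\,(1-\cos(y\cdot\zz))\, d\zz \gtrsim P(|y|^{-1})\, |y|^{d+2}\cdot |y|^{-d} = P(|y|^{-1})\, |y|^2$. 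Consequently $\widehat{\Theta_1 g}(y) + \widehat R(y) \le - c\, P(|y|^{-1})\, |y|^2 + O(|y|^2) \le - \tfrac{c}{2}\, P(|y|^{-1})\, |y|^2$ for $|y|$ small, whence $K_{\mathrm{high}}(y) \ge \tfrac{c}{2}\, |y|^{-d}\, P(|y|^{-1})$; combined with the bound on $K_{\mathrm{low}}$ this gives \eqref{eq:kernellowerbound}.

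The main obstacle is making the decomposition $(-\Delta)^{(d+2)/2}[P\Theta_1] = \Theta_1\, g + R$ and the stated properties of $R$ rigorous when $d$ is odd, since $(-\Delta)^{(d+2)/2}$ is then nonlocal: one must check that $R$ is an honest $L^1$ function (with finite second moment and $\widehat R \in C^2$) rather than a genuine distribution, which amounts to tracking the cancellation of the origin singularities and the $|\zz|^{-2d-2}$ decay. A companion subtlety is the identity $\widehat{\Theta_1 g}(0) + \widehat R(0) = 0$, which has to be read off from the distributional Fourier transform of the sub-quadratically growing symbol $P\Theta_1$. Morally, all of this reflects the fact that for an exact power $P(\zz) = |\zz|^\alpha$ with $0 < \alpha < 2$ the kernel $|y|^{-d-\alpha}$ of $\Lambda^\alpha$ is positive precisely because the Fourier transform of the homogeneous distribution $|\zz|^{\alpha - d - 2}$ has the correct (negative) sign, and the nonnegativity of $g = (-\Delta)^{(d+2)/2} P$ furnished by \eqref{eq:P:cond:lower} is the general substitute for this sign.
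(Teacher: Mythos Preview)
Your proof is correct and follows essentially the same route as the paper: both pass to $M(y)=|y|^{d+2}K(y)$, use the zero-mean property to rewrite it as $\int Q(\zz)\,(1-\cos(y\cdot\zz))\,d\zz$ with $Q=(-\Delta)^{(d+2)/2}P$, and then extract the lower bound from the nonnegativity supplied by \eqref{eq:P:cond:lower} by restricting to an annulus $|\zz|\sim|y|^{-1}$. The only difference is that the paper first modifies $P$ to be $C^{d+2}$ at the origin (absorbing the change into a bounded mean-zero $L^1$ kernel) and then works with $Q$ globally, which sidesteps your cutoff $\Theta_0/\Theta_1$ and the attendant commutator remainder $R$; your odd-$d$ caveat is not addressed in the paper either, the relevant case for the application being $d=2$.
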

\begin{proof}[Proof of Lemma~\ref{lemma:Fourier:lower}]
From our assumptions, the symbol $P$ is a $C^{d+2}$ smooth function except perhaps at the origin. Without loss of generality we can assume $P$ to be smooth at the origin as well. Otherwise, we
can write $P = \tilde P + R$ where $\tilde P$ is a $C^{d+2}$ function everywhere with $\tilde P(\zz) = P(\zz)$ for all $|\zz| > c_0$, $\tilde P(0)=0$ and $(-\Delta)^{N/2} \tilde P$ bounded in
$\RR^d$. The remainder $R$ is a bounded compactly supported function with $R(0)=0$. Therefore, the Fourier multiplier operator with symbol $P$ is the sum of the operators with multipliers
$\tilde P$ and $R$. For $\tilde P$ we apply the proof below and obtain a kernel satisfying \eqref{eq:kernellowerbound}, and for the remainder $R$ we have $(R \hat \theta)^\vee = R^\vee \ast
\theta$ and $R^\vee$ is a bounded, mean zero $L^1$ kernel. Thus, adding $R^\vee$ will not destroy the estimate \eqref{eq:kernellowerbound} for small enough $y$.

If $P$ is smooth near $\zz = 0$, we have that $Q(\zz) = (-\Delta)^{(d+2)/2} P(\zz) \in L^1(\RR^d)$. Indeed, $\int_{|\zz|\leq 1} |Q(\zz)| d\zz$ is finite since $P$ is smooth, while by
\eqref{eq:P:cond:Hormander} and \eqref{eq:P:cond:quadratic} we have
\begin{align*}
 \int_{|\zz|\geq 1} |Q(\zz)| d\zz \leq c_H \int_{|\zz| \geq 1} |\zz|^{-(d+2)} P(\zz) d\zz = c_H \int_1^\infty |\zz|^{-3} P(|\zz|) d|\zz| = c_H \int_0^1 r P(r^{-1}) dr < \infty.
\end{align*}
We may hence define the function $M$, the inverse Fourier transform of $-Q$ as
\begin{align*}
M(y) =- \int_{\RR^d} Q(\zz) e^{i \zz \cdot y} d\zz = - \int_{\RR^d} Q(\zz) \cos(y \cdot \zz) d\zz
\end{align*}
where we have used the fact that $Q$ is radially symmetric and real. Moreover, note that $Q$ has zero mean, since in view of
Lemma~\ref{lemma:Fourier:upper} we have the bound $| {Q}^{\vee} (x) | \leq |x|^{d+2} |{P}^{\vee} (x)| \leq C |x|^2 P(|x|^{-1}) \to 0$ as $|x| \to 0$ since $P$ is sub-quadratic at infinity
\eqref{eq:P:cond:quadratic}. Thus we may rewrite $M(y)$ as
\begin{align}
M(y) &=\int_{\RR^d} Q(\zz) (1 - \cos(y \cdot \zz)) d\zz = \int_{\RR^d} Q(\zz)  (1 - \cos( \zz_1 |y|)) d\zz \label{eq:app:M:def}
\end{align}
by using that $Q$ is radially symmetric. 
In order to appeal to \eqref{eq:P:cond:lower} we further split
\begin{align}
M(y)&=  \int_{|\zz|\leq c_{0}} Q(\zz)  (1 -\cos( \zz_1 |y|)) d\zz + \int_{|\zz|>c_0} Q(\zz)  (1 -\cos( \zz_1 |y|) ) d\zz. \label{eq111}
\end{align}
For all $|y| \leq c_{0}^{-1}$, the first integral in \eqref{eq111} can be estimated by from below by $-C_Q |y|^2,$ where $C_{Q} = \int_{|\zz| \leq c_{0}} |Q(\zz)| d\zz.$
Then using \eqref{eq:P:cond:lower}, 
for $|y| \leq c_{0}^{-1}$ we obtain
\begin{align}
M(y) & \geq -C_Q |y|^2 +c_H^{-1} \int_{|\zz|\geq c_0} |\zz|^{-(d+2)} P(\zz) (1 - \cos( \zz_1 |y|)) d\zz \notag\\ &\geq -C_Q |y|^2 + c_H^{-1} |y|^{2} \int_{|z|\geq c_0|y|} |z|^{-(d+2)} P(z
|y|^{-1})  (1 - \cos(z_1)) dz \notag\\ &\geq -C_Q |y|^2 + c_H^{-1} |y|^{2} \int_{2 \geq |z|\geq 1} |z|^{-(d+2)} P(z |y|^{-1})  (1 - \cos(z_1)) dz \notag \\ & \geq -C_Q |y|^2 + c_H^{-1}
2^{-(d+2)} |y|^{2} P(|y|^{-1}) \int_{2 \geq |z| \geq 1} (1 - \cos z_1) dz \notag\\ &\geq -C_Q |y|^2 + 2 C^{-1} |y|^{2} P(|y|^{-1}) \label{eq:M:lower:bound:1}
\end{align}
for some sufficiently large constant $C>0$ that depends only on $c_{H}$ and $d$. The assumption that $P(\zz) \to \infty$ as $|\zz| \to \infty$, combined with \eqref{eq:M:lower:bound:1} shows that
\begin{align*}
 M(y) \geq C^{-1} |y|^{2} P(|y|^{-1})
\end{align*}
holds for all sufficiently small $|y|.$ 

To conclude, we note that since $\hat M =  - Q = - (-\Delta)^{(d+2)/2} P$,
we have that $K(y) = - P^{\vee}(y) = |y|^{-(d+2)} M(y)$ in the sense of tempered
distributions, and hence we obtain that for sufficiently small $|y|$ the bound $K(y) \geq C^{-1} |y|^{-d} P(|y|^{-1})$ holds, concluding the proof of the lemma.
\end{proof}

\begin{remark}[\bf Examples of symbols $P$]
The conditions \eqref{eq:P:cond:doubling}--\eqref{eq:P:cond:quadratic} that were assumed on the symbol $P$  in order to obtain the upper bound for the associated kernel, are fairly common
assumptions in Fourier analysis.  For all symbols for interest to us in this paper, condition \eqref{eq:P:cond:lower} also naturally holds. The dimension relevant to the
SQG equation is $d=2$. 
When $P(\zz) = |\zz| (\log (|\zz|) )^{-a}$ for sufficiently large $|\zz|$ and
$0<a\leq 1$, corresponding to  \eqref{eq:m:log}, one may verify that $ (-\Delta)^{2}P(\zz) |\zz|^{4} / P(\zz) \to 1$ as $|\zz| \to \infty$, so that we may take $c_{H}=2$ in
\eqref{eq:P:cond:lower} if $c_{0}$ is sufficiently large. Thus condition \eqref{eq:P:cond:lower} is not restrictive for the class of symbols we have in mind.
\end{remark}

The proof of Theorem~\ref{thm:SQG:Fourier} combines the estimates in Lemmas~\ref{lemma:Fourier:upper} and~\ref{lemma:Fourier:lower} above, with the argument given in Section~\ref{sec:SQG:global}. One 
complication arises due to the fact that  \eqref{eq:kernellowerbound} only holds for small enough $|y|$. In fact, for the class of multipliers $P$ that we consider,
positivity of the kernel $K$ is not assured. Because of that, $L^\infty$ maximum principle is no longer available. However, there is an easy substitute which is sufficiently
strong for our purpose.

\begin{lemma}\label{linfty11}
Assume that smooth function $\theta(x,t)$ solves \eqref{eq:P:SQG:2}. Suppose that the kernel $K(y)$ corresponding to the multiplier $P$ via \eqref{eq:P:K:def} satisfies
$|K(y)| \leq C|y|^{-d}P(|y|^{-1})$ for all $y$ and $K(y) \geq C^{-1}|y|^{-d}P(|y|^{-1})$ for all $|y| \leq 2\sigma,$ where $\sigma, C$ are positive constants.
Then there exists $M_{*} = M_{*}(P,\theta_0)$ such that $\|\theta(x,t)\|_{L^\infty} \leq M$ for all $t \geq 0.$
\end{lemma}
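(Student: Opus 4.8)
The plan is to write $\EL$ as a local dissipative piece with a nonnegative kernel, plus a damping term $\rho\,\theta$ with $\rho>0$, plus a convolution remainder which is controlled by the (time-uniformly bounded) $L^2$ norm of $\theta$. First I would fix the truncation radius. For $0<r\le 2\sigma$ set $\rho(r)=\int_{|y|>r}K(y)\,dy$, which is finite since $|K(y)|\le C|y|^{-d}P(|y|^{-1})$ makes $\EL$ well defined on periodic functions. On $|y|\le 2\sigma$ we have $K(y)\ge C^{-1}|y|^{-d}P(|y|^{-1})>0$, and since $\EL$ is dissipative ($P(|\zz|)\to\infty$, equivalently $m$ singular at the origin) we get $\int_{|y|\le 2\sigma}|y|^{-d}P(|y|^{-1})\,dy=+\infty$; hence $\rho(r)=\int_{|y|>2\sigma}K+\int_{r<|y|\le 2\sigma}K\to+\infty$ as $r\to0^+$. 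I would therefore choose $R\in(0,2\sigma]$ with $\rho:=\rho(R)>0$, put $K_{\rm in}=K\mathbf 1_{\{|y|\le R\}}\ge0$ and $K_{\rm out}=K\mathbf 1_{\{|y|>R\}}$, and record
\[
\EL\theta(x)=\EL_{\rm in}\theta(x)+\rho\,\theta(x)-(K_{\rm out}\ast\theta)(x),\qquad \EL_{\rm in}\theta(x)=\mathrm{P.V.}\!\!\int_{|y|\le R}\!\!\big(\theta(x)-\theta(x+y)\big)K(y)\,dy.
\]
Here $K_{\rm out}$ is bounded near the origin and decays polynomially at infinity by the upper bound \eqref{eq:Kernel:upper:bound}, so its $\TT^d$-periodization $\widetilde K_{\rm out}$ lies in $L^1(\TT^d)\cap L^2(\TT^d)$.

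Next I would record the energy inequality: since $u$ is divergence free, pairing \eqref{eq:P:SQG:1}--\eqref{eq:P:SQG:2} with $\theta$ and using Plancherel with $P\ge0$ gives $\frac{d}{dt}\|\theta(\cdot,t)\|_{L^2}^2=-2\int P(\zz)|\hat\theta(\zz,t)|^2\,d\zz\le0$, so $\|\theta(\cdot,t)\|_{L^2}\le\|\theta_0\|_{L^2}$ for all $t$. Cauchy--Schwarz applied to the periodized convolution then yields $\|(K_{\rm out}\ast\theta)(\cdot,t)\|_{L^\infty(\TT^d)}\le\|\widetilde K_{\rm out}\|_{L^2(\TT^d)}\|\theta_0\|_{L^2}=:C_R\|\theta_0\|_{L^2}$ for all $t$; the point is that this bound is uniform in time and does \emph{not} involve $\|\theta(\cdot,t)\|_{L^\infty}$.

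Finally I would run a maximum principle with damping. Put $\Psi(t)=\|\theta(\cdot,t)\|_{L^\infty}$, which is locally Lipschitz because $\theta$ is smooth; as usual, at a.e.\ $t$ one has $\Psi'(t)=\mathrm{sgn}(\theta(x_t,t))\,\partial_t\theta(x_t,t)$ for a point $x_t$ realizing $|\theta(x_t,t)|=\Psi(t)$. There $\nabla\theta(x_t,t)=0$, so the transport term drops, while $\mathrm{sgn}(\theta(x_t,t))\,\EL_{\rm in}\theta(x_t,t)\ge0$ since $K_{\rm in}\ge0$ and $x_t$ is an extremum of $\theta$. Using the decomposition above,
\[
\Psi'(t)=-\,\mathrm{sgn}\big(\theta(x_t,t)\big)\,\EL\theta(x_t,t)\le -\rho\,\Psi(t)+\big|(K_{\rm out}\ast\theta)(x_t,t)\big|\le -\rho\,\Psi(t)+C_R\|\theta_0\|_{L^2},
\]
and Gr\"onwall gives $\Psi(t)\le\max\!\big(\|\theta_0\|_{L^\infty},\,C_R\|\theta_0\|_{L^2}/\rho\big)=:M_*$ for all $t\ge0$, as claimed.

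The step I expect to be the crux is the choice of $R$. If one merely splits $\EL$ into a dissipative local part plus the \emph{bounded} operator $K_{\rm out}\ast(\cdot)-\rho\,(\cdot)$ (bearing in mind that $K$ is in general signed away from the origin, so no maximum principle is available for that part), a plain maximum principle only yields exponential-in-time growth of $\|\theta\|_{L^\infty}$. What rescues a time-uniform bound is that the genuine dissipativity of $\EL$ forces $\int_{|y|>R}K\,dy\to+\infty$ as $R\to0^+$, so a small enough truncation radius turns the excess mass of $K$ near the origin into a strictly positive damping coefficient $\rho$; the piece of $\EL$ left over is a convolution, hence sends the time-uniformly bounded $L^2$ norm into $L^\infty$ rather than scaling with $\|\theta\|_{L^\infty}$, and the damping then closes the estimate.
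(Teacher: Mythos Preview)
Your argument is correct and rests on the same two ingredients as the paper's proof: positivity of $K$ near the origin (so the local part of $\EL$ has the right sign at an extremum) together with the $L^2$ energy bound plus Cauchy--Schwarz to control the far part. The difference is only in packaging. The paper argues by contradiction: if $\|\theta\|_{L^\infty}$ first reaches a large level $M_*$ at some time $t_*$, then at the spatial maximum $\bar x$ one estimates
\[
\EL\theta(\bar x)\ \ge\ cM_*\int_{\sigma\le|y|\le2\sigma}\frac{P(|y|^{-1})}{|y|^d}\,dy\ -\ C\|\theta_0\|_{L^2(\TT^d)}\Bigl(\int_{|y|\ge\sigma}\frac{P(|y|^{-1})^2}{|y|^{2d}}\,dy\Bigr)^{1/2},
\]
which is strictly positive for $M_*$ large, contradicting $\partial_t M(t_*)\ge0$. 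You instead extract an explicit damping coefficient $\rho=\int_{|y|>R}K(y)\,dy>0$ by allowing the truncation radius $R$ to be chosen small (using that $\int_{|y|\le2\sigma}|y|^{-d}P(|y|^{-1})\,dy=+\infty$), and then close via the differential inequality $\Psi'\le-\rho\Psi+C_R\|\theta_0\|_{L^2}$ and Gr\"onwall. Your formulation makes the mechanism a bit more transparent and gives an explicit exponential relaxation rate; the paper's version is terser. One small remark: your claim that the periodization $\widetilde K_{\rm out}\in L^1(\TT^d)\cap L^2(\TT^d)$ uses not just the upper bound \eqref{eq:Kernel:upper:bound} but also the decay $P(|y|^{-1})\lesssim|y|^{-\alpha}$ for large $|y|$, which follows from the standing hypothesis \eqref{eq:cond:P:regularity:2}; this is available in context but is worth making explicit.
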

\begin{proof}
Letting $M(t) = \|\theta(\cdot,t)\|_{L^\infty}$, we
prove that there exists $M_* \geq M(0)$, sufficiently large, such that $M(t) \leq M_*$ for all $t\geq 0$. If not, then for any fixed $M_*$ there exits a $t_*>0$, such that $M(t_*) = M_*$, and
$\partial_t M(t_*) \geq 0$.  For this fixed $t_*$ let $\bar x$ be a point of maximum for $\theta(\cdot,t_*).$ 
We have
\begin{align}
 \EL_1 \theta( \bar x)
 &\geq \int_{\sigma \leq |y| \leq \infty} (\theta(\bar x) - \theta(\bar x + y)) K(y) dy \notag\\
 &\geq cM_*\int_{\sigma \leq |y|\leq 2\sigma} \frac{P(|y|^{-1})}{|y|^d} dy - C\|\theta \|_{L^2(\TT^d)}
  \left( \int_{\sigma \leq |y|} \frac{P(|y|^{-1})^2}{|y|^{2d}} dy \right)^{1/2} \notag\\
 & \geq c M_*  P((2\sigma)^{-1}) - C \|\theta_0\|_{L^2(\TT^d)} P(\sigma^{-1}) \sigma^{-d/2} \label{eq:L1:lower}
\end{align}
We used that $P \geq 0$ implies $\|\theta(\cdot,t)\|_{L^2(\TT^d)} \leq \|\theta_0\|_{L^2(\TT^d)}$ in the above calculation.
The estimate \eqref{eq:L1:lower} proves that $\partial_{t} M(t_*)$ must be negative if $M_*$ is large enough, depending only on $P$
(though $\sigma$ and other constants) and $\theta_0.$ It follows that $M(t)$ will never exceed the larger of this bound
or $\|\theta_0\|_{L^\infty}.$
\end{proof}

\begin{proof}[Proof of Theorem~\ref{thm:SQG:Fourier}] 

The first two lemmas of this section show that for the multiplier $P$ satisfying \eqref{eq:P:cond:doubling}--\eqref{eq:P:cond:quadratic} and
\eqref{eq:P:cond:lower} we have that $( P \hat \theta)^{\vee}(x) = \int (\theta(x) - \theta(x+y)) K(y) dy$, with $K$ being radial and smooth away from zero.
Moreover, $K$ satisfies $|K(y)| \leq C|y|^{-d}P(|y|^{-1}$ for all $y$ and $K(y) \geq c|y|^{-d}P(|y|^{-1}$ for all $|y| \leq 2\sigma,$ where $C,c,\sigma$ are positive constants
depending only on $P.$

Consider a smooth radially decreasing function $\varphi_{0}(y)$ that is identically $1$ on $|y| \leq \sigma $ and vanishes identically on $|y| \geq 2 \sigma$. We decompose
\begin{align*}
 K(y) = K(y) \varphi_{0}(y) + K(y) (1 - \varphi_0(y)) =: K_1(y) + K_2(y),
\end{align*}
so that
\begin{align*}
 ( P \hat \theta)^{\vee}(x) &= \int_{\RR^d} ( \theta(x) - \theta(x+y)) K_1 (y) + \int_{\RR^d}( \theta(x) - \theta(x+y)) K_2 (y) =: \EL_1 \theta(x) + \EL_2 \theta(x).
\end{align*}
The nonlocal operator $\EL_1$ is of type \eqref{eq:L:general}, since by letting
\begin{align}
& m(r) = C^{-1} P(r^{-1}) \varphi_{0}(r) \label{eq:m:Fourier}
\end{align}
we have that
\begin{align*}
K_{1}(y) \geq m(|y|) |y|^{-d}
\end{align*}
for all $y$ and some $C>0.$ It is clear that the above defined $m$ satisfies properties \eqref{eq:m:critical}--\eqref{eq:m:decay} and \eqref{eq:m:SQG} in view of assumptions
\eqref{eq:cond:P:regularity:1}--\eqref{eq:cond:P:regularity:3} imposed on $P$. Therefore for $\EL_1$ we will be able to directly use the estimate in Lemma~\ref{lemma:D}, which relies only
lower bounds for the kernel associated to $\EL_1$.

On the other hand, we observe that $K_2 \in L^1(\RR^d)$ since $K_2(y)  = 0$ for $y \leq \sigma$, and we have
\begin{align*}
|K_2(y)| \leq |K(y)| \leq C P(|y|^{-1}) |y|^{-d} \leq \sigma^{\alpha} P(\sigma^{-1}) |y|^{-d-\alpha}
\end{align*}
for any $|y|\geq \sigma$, by using \eqref{eq:cond:P:regularity:2}. Let us fix the constant $C_2 = \| K_2 \|_{L^1(\RR^d)}$. Then if $\theta(\cdot,t)$ obeys the modulus of continuity $\omega(\xi)$ it
is clear that
\begin{align}
 |\EL_2 \theta (x,t) -  \EL_2 \theta(y,t)| \leq 2 C_2 \min\{ \omega(\xi), M_* \}, \label{eq:EL:2}
\end{align}
where $M_*$ is the $L^\infty$ norm bound from Lemma~\ref{linfty11}, holds for all $x,y \in \RR^d$, where $|x-y|=\xi$.

Now the argument of Section~\ref{sec:SQG:global} goes through with minor changes. We provide an outline of the argument to verify this.
First,, similarly to \eqref{eq:IC:attained} we may prove that for $B$ large enough (now depending on $\sigma$ as well)  we have $\omb(\sigma) \geq 3 M_* \geq 3 \| \theta(\cdot,t)\|_{L^\infty}$, so that the modulus of continuity can only be broken at values of $\xi \in (0,\sigma)$.  Let $\DD_B$ and $\DD_B^\perp$ be the
bounds obtained from the dissipative operator $\EL_1$ via Lemma~\ref{lemma:D}. Note that the only contribution from the integral defining $\DD_B$ that is used in the estimates is for $\eta \in
(0,\xi)$ (see \eqref{eq:D:high} and \eqref{eq:D:low}), and for us $\xi < \sigma$ so all the bounds on the dissipation given in the proof of Theorem~\ref{thm:SQG:global} require no
modification. Therefore, provided $\kappa$ and $\gamma$ are chosen sufficiently small, we have
\begin{align*}
\min \left\{ \Omega_{B}(\xi), \tilde \Omega_B(\xi) + \Omega^{\perp}_B(\xi) \right\} \omb'(\xi) - \left( \frac{1}{2} \DD_{B}(\xi) + \DD_B^\perp(\xi) \right)  < 0
\end{align*}
for any $B \geq 1$ and $\xi \in (0,\sigma)$, exactly as in the proof of Theorem~\ref{thm:SQG:global}. The proof is hence completed once we show that
the contribution of $\EL_2$ is controlled: 
\begin{align}
2 C_2 \min\{ \omb(\xi), 2 \|\theta\|_{L^\infty} \} \leq  \frac12 \DD_B(\xi)  \label{eq:DB:bound}
\end{align}
for any $\xi \in (0,\sigma)$ and any $B \geq 1$. The range $\xi \in (0,\delta(B))$ is clear since here $\omb(\xi) \leq B \xi$ and by \eqref{eq:D:low} we have
\begin{align*}
\DD_B(\xi) \geq - \frac{C}{A} \xi^{2} m(\xi) \omb''(\xi) = \frac{C B^2}{2 \kappa C_{\alpha} A} \xi \left( 3 + \ln \frac{\delta(B)}{\xi} \right) \geq \frac{3 C}{2 \kappa C_{\alpha} A} B \xi \geq 4 C_2 B\xi \geq 4 C_2 \omb(\xi)
\end{align*}
by letting $\kappa$ be small enough (independent of $B \geq 1$).

We next consider the range $\xi \in (\delta(B), \sigma)$. In view of \eqref{eq:D:high}, we have
$\DD_B(\xi) \geq C \omb(\xi) m(2\xi),$ where $C = (2-c_\alpha)/A$.
Since $P(\zz) \to \infty$ as $|\zz| \to \infty$, we have that $C m(2\xi) \geq 4 C_2$, for all $\xi \in (\delta(B),\kappa)$, for some $\kappa>0$. If $\kappa \geq \sigma$ the
proof is completed, but this cannot be guaranteed, so we have to also consider the case $\kappa < \sigma$.  For $\xi \in (\kappa,\sigma)$, we have
\begin{align}
\DD_B(\xi) \geq C \omb(\xi) m(2\xi)
\geq C \omb(\kappa) m(\sigma)
\geq C m(\sigma) \gamma \int_{\delta(B)}^{\kappa} m(2\eta) d\eta \label{eq:DB}.
\end{align}
By making $B$ large enough we can ensure that the right hand side of \eqref{eq:DB} is larger than $2M_{*},$ completing the proof.
\end{proof}

\section{Global well-posedness for a 2D Euler-type equation with more singular velocity} \label{sec:Euler}

In this section we address the issue of global regularity for the {\em inviscid} active scalar equation
\begin{align}
&\partial_{t} \theta - u \cdot \nabla  \theta = 0 \label{eq:Euler:1} \\ &u = \nabla^{\perp} \LL^{-2} P(\LL) \theta \label{eq:Euler:2}
\end{align}
where the multiplier $P(\zz)=P(|\zz|)$ is a radially symmetric function which is smooth, non-decreasing, with $P(0)=0$ and $P(\zz)\to \infty$ as $|\zz|\to \infty$. In addition, we assume that
$P$ satisfies a doubling property,
\begin{align}
 P(2 |\zz|) \leq c_D P (|\zz|) \label{eq:u:doubling}
\end{align}
for some doubling constant $c_D \geq 1$, that
\begin{align}
|\zz|^{-\alpha}P(|\zz|) \mbox{ is non-increasing}, \label{eq:u:rate}
\end{align}
for some $\alpha \in (0,1)$, and a H\"ormander-Mikhlin type condition
\begin{align}
\left| \partial_\zz^k P(\zz) \right| |\zz|^{|k|} \leq c_H P(\zz) \label{eq:u:Hormander}
\end{align}
holds for some constant $c_H \geq 1$, for all multi-indices $k \in {\mathbb Z}^d$, with $|k|\leq N$, where $N$ depends only on the dimension $d$ and on the doubling constant $c_D$. Condition
\eqref{eq:u:rate} is quite natural in view of \eqref{eq:u:growth} below, while conditions \eqref{eq:u:doubling} and \eqref{eq:u:Hormander} are standard in Fourier analysis.
We remark that while finalizing this paper, we have learned of a recent related work \cite{TE} which proves a result very similar to the one proved in this section under slightly less restrictive assumptions on $P.$

Using the technique of Lemma~\ref{lemma:Fourier:upper}, one may show using \eqref{eq:u:doubling} and \eqref{eq:u:Hormander} that the convolution kernel $K$ corresponding to the operator
$\nabla^\perp \LL^{-2} P(\LL)$, i.e. to the Fourier multiplier $i \zz^\perp |\zz|^{-2} P(|\zz|)$, satisfies the following estimates
\begin{align}
|K(x)| \leq C |x|^{-d+1} P(|x|^{-1}), \quad |\nabla K(x)| \leq C |x|^{-d} P(|x|^{-1}), \quad |\nabla \Delta K(x)| \leq C |x|^{-d-2} P(|x|^{-1}) \label{eq:K:est}
\end{align}
for all $x\neq 0$. Moreover we note that $K$ integrates to $0$ around the unit sphere, and hence convolution with $K$ annihilates constants.

The study of Euler equations with more singular velocities,  \eqref{eq:Euler:1}--\eqref{eq:Euler:2}, was recently initiated by Chae, Constantin, and Wu in \cite{CCW}. They prove the global
global regularity for the Loglog-Euler equation; namely, the prove global regularity in the case that arises when $P(\zz) = [\ln ( 1+ \ln( 1+ |\zz|^2))]^{\gamma},$ for $\gamma \in[0,1]$. Their
approach relies on estimates for the Fourier localized gradient of the velocity for a particular class of symbols. Our aim here is to provide a proof of global regularity for a slightly more
general class of symbols $P$, via the modulus of continuity method. 
The main result of this section is the following:

\begin{theorem}[\bf Global regularity for the $P$-Euler equation] \label{thm:P-Euler}
Let $P$ be a smooth radially symmetric function which is smooth, non-decreasing, with $P(0)=0$ and $P(\zz)\to \infty$ as $|\zz|\to \infty$ and satisfies assumptions
\eqref{eq:u:doubling}--\eqref{eq:u:Hormander}. If $\theta_0$ is periodic and smooth, and we assume that
\begin{align}
\int_1^M \frac{dr}{r \ln (2r) P(r)} \to \infty, \qquad \mbox{as } M\to\infty, \label{eq:u:growth}
\end{align}
then the $P-$Euler equation  \eqref{eq:Euler:1}--\eqref{eq:Euler:2} has a global in time smooth solution.
\end{theorem}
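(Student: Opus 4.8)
The plan is to run the modulus of continuity method of Section~\ref{sec:SQG:global}, modified for the fact that \eqref{eq:Euler:1}--\eqref{eq:Euler:2} is \emph{inviscid}: there is no dissipation to keep a single modulus alive for all time, so instead of a family $\{\omega_B\}$ with each member preserved forever, I would construct a one--parameter family $\{\omega_\lambda\}_{\lambda \geq 1}$ of moduli of continuity (in the sense of Definition~\ref{def:MOC}) that is \emph{preserved with a drift}: if $\theta(\cdot,t_0)$ obeys $\omega_{\lambda(t_0)}$ then $\theta(\cdot,t)$ obeys $\omega_{\lambda(t)}$ for $t>t_0$, where $\lambda(t)$ solves a scalar ODE. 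Concretely I would take $\omega_\lambda$ to behave, up to the regularization near the origin required to enforce $\omega_\lambda''(0+)=-\infty$, like $\min\{\lambda \xi,\, 2\|\theta_0\|_{L^\infty}\}$; the cap at $\|\theta_0\|_{L^\infty}$ (legitimate by the maximum principle for the transport equation \eqref{eq:Euler:1}) will be essential below. Any smooth periodic $\theta_0$ then obeys $\omega_{\lambda_0}$ once $\lambda_0 \geq \|\nabla\theta_0\|_{L^\infty}$. Since $\omega_\lambda$ is a genuine modulus with $\omega_\lambda'(0+)\approx\lambda<\infty$, preservation gives $\|\nabla\theta(\cdot,t)\|_{L^\infty}\lesssim\lambda(t)$, so once we know $\lambda(t)<\infty$ for all finite $t$ the proof is finished by local existence (classical for \eqref{eq:Euler:1}--\eqref{eq:Euler:2}) and the Beale--Kato--Majda-type continuation criterion: the smooth solution persists as long as $\int_0^T \|\nabla u\|_{L^\infty}\,dt<\infty$.

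The analytic heart is a transfer lemma — the analogue of Lemma~\ref{lemma:u:MOC} for the more singular constitutive law \eqref{eq:Euler:2}. Writing $u=K\ast\theta$ with $K$ the radial, sphere-mean-zero kernel of $\nabla^\perp\Lambda^{-2}P(\Lambda)$, which by \eqref{eq:K:est} satisfies $|K(x)|\lesssim |x|^{-1}P(|x|^{-1})$ and $|\nabla K(x)|\lesssim |x|^{-2}P(|x|^{-1})$ in $d=2$, one estimates $u(x)-u(y)$ for $|x-y|=\xi$ by inserting the average $\theta(\tfrac{x+y}{2})$ (using $\int K=0$) and splitting the $z$-integral at $|z-\tfrac{x+y}{2}|=2\xi$. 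On the near field one uses $|\theta(z)-\theta(\tfrac{x+y}{2})|\leq\omega_\lambda(|z-\tfrac{x+y}{2}|)$ together with $\int_0^a P(r^{-1})\,dr\lesssim a\,P(a^{-1})$ (which follows from \eqref{eq:u:rate} with $\alpha<1$); on the far field one uses the gradient bound on $K$ and $|\theta(z)-\theta(\tfrac{x+y}{2})|\leq\min\{\omega_\lambda(|z-\tfrac{x+y}{2}|),\,2\|\theta_0\|_{L^\infty}\}$, splitting the radial integral at the scale $r_\ast\sim\|\theta_0\|_{L^\infty}/\lambda$ where $\omega_\lambda$ reaches its cap. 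The outcome is that $u$ obeys a modulus $\Omega$ with
\[
\sup_{\xi>0}\frac{\Omega(\xi)}{\xi}\;\lesssim\;\|\theta_0\|_{L^\infty}\int_1^{\lambda}\frac{P(s)}{s}\,ds\;\lesssim\;\|\theta_0\|_{L^\infty}\,P(\lambda)\ln(2\lambda),
\]
where in the last step one uses $\int_1^\lambda s^{-1}P(s)\,ds=\int_0^{\ln\lambda}P(e^t)\,dt\leq P(\lambda)\ln\lambda$ since $P$ is non-decreasing; the doubling property \eqref{eq:u:doubling} is used to absorb $\|\theta_0\|_{L^\infty}$-dependent rescalings of the argument of $P$.

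With this in hand I would run the breakthrough scenario of Lemma~\ref{lemma:break}: at a would-be first breakdown time and point $x\neq y$ for $\omega_{\lambda(t)}$ one has, exactly as in \eqref{eq:MOC}, $\partial_t(\theta(x,t)-\theta(y,t))\leq \omega_\lambda'(\xi)\,\Omega(\xi)$, while $\partial_t\omega_{\lambda(t)}(\xi)=\dot\lambda\,\partial_\lambda\omega_\lambda(\xi)\gtrsim\dot\lambda\,\xi$ for the relevant scales $\xi\lesssim r_\ast$ (where $\omega_\lambda(\xi)\approx\lambda\xi$). Hence choosing $\lambda(t)$ to solve
\[
\dot\lambda \;=\; C\,\|\theta_0\|_{L^\infty}\,\lambda\,P(\lambda)\ln(2\lambda),\qquad \lambda(0)=\lambda_0,
\]
makes the modulus strictly unbreakable for every $\xi>0$ and every $t$, just as in Section~\ref{sec:SQG:global}. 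Separating variables, $\int_{\lambda_0}^{\lambda(t)} (r\,P(r)\ln(2r))^{-1}\,dr = C\|\theta_0\|_{L^\infty}\,t$, so $\lambda(t)$ stays finite for all finite $t$ precisely because $\int_1^M (r\ln(2r)P(r))^{-1}\,dr\to\infty$ as $M\to\infty$, which is hypothesis \eqref{eq:u:growth}. Thus $\theta(\cdot,t)$ obeys a genuine modulus of continuity for all time, $\|\nabla\theta(\cdot,t)\|_{L^\infty}$ is locally bounded in $t$, and by the transfer lemma so is $\|\nabla u(\cdot,t)\|_{L^\infty}\lesssim\|\theta_0\|_{L^\infty}P(\lambda(t))\ln(2\lambda(t))$; the continuation criterion then gives a global smooth solution.

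I expect the main obstacle to be the transfer lemma, specifically squeezing the far-field integral down to exactly the factor $P(\lambda)\ln(2\lambda)$ and no worse — this is where \eqref{eq:u:rate} (so that $\int_0^a P(r^{-1})dr\lesssim aP(a^{-1})$ and the near-field is harmless) and the doubling \eqref{eq:u:doubling} on $P$ are used in an essential way, and it is the step that dictates the precise form of \eqref{eq:u:growth}. A secondary technical point is the explicit regularized construction of $\omega_\lambda$ near $\xi=0$ and the verification that $\omega_\lambda'(\xi)\,\Omega(\xi)<\dot\lambda\,\partial_\lambda\omega_\lambda(\xi)$ holds \emph{uniformly in} $\xi$, not merely as $\xi\to 0$; here the cap of $\omega_\lambda$ at $\|\theta_0\|_{L^\infty}$ and the decay of $K$ encoded in \eqref{eq:K:est} at large scales do the work.
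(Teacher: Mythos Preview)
Your approach is correct and leads to the same Osgood-type ODE the paper obtains, but you are working harder than necessary and along a genuinely different route. The paper does \emph{not} run the breakthrough scenario for a time-dependent family of moduli at all. Instead it proves a simple flow lemma (Lemma~\ref{lemma:flow}): for pure transport, any modulus $\omega(\xi)$ of $\theta_0$ becomes $\omega(B(t)\xi)$ for $\theta(\cdot,t)$ with $B(t)=\exp\int_0^t\|\nabla u(\cdot,s)\|_{L^\infty}\,ds$. Taking $\omega(\xi)=A\xi$ reduces everything to a direct pointwise bound on $\nabla u(x)=\int\nabla K(y)\theta(x+y)\,dy$, obtained by splitting at scales $r$ and $1$ and using respectively the Lipschitz bound on $\theta$, the $L^\infty$ bound, and (for the far tail) the $L^2$ bound via $(-\Delta)^{-1}$. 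Optimizing $r=B(t)^{-1}$ gives $\|\nabla u\|_{L^\infty}\leq C A\big(1+P(B)(1+\ln 2B)\big)$ and hence $\dot B\leq C A\,B\,P(B)\ln(2B)$, which is global under \eqref{eq:u:growth}.

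Your transfer lemma is doing the same work in increment form: indeed $\sup_\xi \Omega(\xi)/\xi$ is exactly the Lipschitz constant of $u$, and your breakthrough inequality $\omega_\lambda'(\xi)\Omega(\xi)<\dot\lambda\,\partial_\lambda\omega_\lambda(\xi)$ collapses, for $\omega_\lambda(\xi)\approx\lambda\xi$, to $\dot\lambda/\lambda>\sup_\xi\Omega(\xi)/\xi$, i.e.\ the same ODE. What your route buys is that it stays entirely within the modulus-of-continuity framework of Section~\ref{sec:SQG:global} (no characteristics), and it never needs to differentiate $u$; what it costs is the technical regularization of $\omega_\lambda$ near $\xi=0$ to enforce $\omega_\lambda''(0+)=-\infty$ and the verification of the breakthrough inequality uniformly in $\xi$, both of which the paper sidesteps entirely by the Gr\"onwall/flow argument. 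A minor point: where you cap $\omega_\lambda$ and use $L^\infty$ for the far tail, the paper instead uses the $L^2$ conservation together with $\|(-\Delta)^{-1}\theta\|_{L^\infty}\lesssim\|\theta\|_{L^2}$; both close the far-field integral, but the paper's choice avoids tracking the cap scale $r_\ast$.
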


\begin{remark}[\textbf{Integral formulation}]
In fact, our proof provides a stronger result if we state the constitutive law relating $u$ and $\theta$ in terms of an integro-differential operator instead of a Fourier multiplier
\[ u(x) = \int_{\RR^d} \theta(x+y) K(y) dy \]
where $K$ is any kernel which satisfies the hypothesis \eqref{eq:K:est} for any function $P$ for which \eqref{eq:u:doubling}, \eqref{eq:u:rate} and \eqref{eq:u:growth} hold, but not necessarily \eqref{eq:u:Hormander}.
\end{remark}

In the previous sections, we constructed autonomous families of moduli of continuity preserved by the dynamics of the respective equations. In the inviscid case, we will construct a single modulus of continuity and then scale it autonomously. The following lemma makes the above observation precise:

\begin{lemma} [\bf Modulus of continuity under pure transport]\label{lemma:flow}
Let $u$ be a Lipschitz vector field and let $\theta$ solve the transport equation
\begin{equation} \label{eq:transport}
\partial_t \theta + u \cdot \nabla \theta = 0
\end{equation}
If $\theta_0 = \theta(\cdot,0)$ has some modulus of continuity $\omega(\xi)$, then $\theta(\cdot,t)$ has the modulus of continuity $\omega(B(t) \xi)$ where $B(t)$ is given by
\begin{align*}
B(t) = \exp \left( \int_0^t \| \nabla u (\cdot,s)\|_{L^\infty}  \right).
\end{align*}
Equivalently, $B(t)$ solves $B(0)=1$ and $\dot B (t) = \| \nabla u (\cdot,t)\|_{L^\infty}  B(t)$.
\end{lemma}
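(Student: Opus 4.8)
The plan is to argue entirely via the flow map of the velocity field. Since $u(\cdot,t)$ is Lipschitz (uniformly on compact time intervals), the Cauchy--Lipschitz theory guarantees that the ODE $\dot X(t) = u(X(t),t)$ with $X(0)=x$ has a unique solution for every $x \in \RR^d$, and that the associated flow $\Phi_t \colon \RR^d \to \RR^d$ is, for each fixed $t$, a bi-Lipschitz homeomorphism. The transport equation \eqref{eq:transport} then says precisely that $\theta$ is constant along trajectories, i.e.\ $\theta(\Phi_t(x),t) = \theta_0(x)$, or equivalently $\theta(\cdot,t) = \theta_0 \circ \Phi_t^{-1}$.

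Next I would quantify how much $\Phi_t$ can distort distances. Given two initial points $a,b$ with $a \neq b$, set $a(t) = \Phi_t(a)$, $b(t) = \Phi_t(b)$; by uniqueness of trajectories $a(t) \neq b(t)$ for all $t$, so $|a(t)-b(t)|$ is differentiable in $t$ and
\[
\left| \frac{d}{dt} |a(t)-b(t)| \right| \leq |u(a(t),t) - u(b(t),t)| \leq \|\nabla u(\cdot,t)\|_{L^\infty}\, |a(t)-b(t)| .
\]
Integrating $\left| \frac{d}{dt} \log |a(t)-b(t)| \right| \leq \|\nabla u(\cdot,t)\|_{L^\infty}$ and using the definition $B(t) = \exp\!\big(\int_0^t \|\nabla u(\cdot,s)\|_{L^\infty}\,ds\big) \geq 1$ yields the two-sided bound
\[
\frac{|a-b|}{B(t)} \leq |\Phi_t(a) - \Phi_t(b)| \leq B(t)\, |a-b| ;
\]
the left inequality is what will be used. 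The stated ODE characterization $\dot B = \|\nabla u(\cdot,t)\|_{L^\infty} B$, $B(0)=1$, is immediate upon differentiating the exponential.

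To conclude, fix $x \neq y$ and a time $t$, and put $a = \Phi_t^{-1}(x)$, $b = \Phi_t^{-1}(y)$, so that $x = \Phi_t(a)$, $y = \Phi_t(b)$ and, by the previous display, $|a-b| \leq B(t)\,|x-y|$. Since $\theta_0$ obeys $\omega$ and $\omega$ is non-decreasing,
\[
|\theta(x,t) - \theta(y,t)| = |\theta_0(a) - \theta_0(b)| \leq \omega(|a-b|) \leq \omega\big(B(t)\,|x-y|\big),
\]
with strict inequality whenever $\theta_0$ obeys $\omega$ strictly; this is exactly the assertion that $\theta(\cdot,t)$ has modulus of continuity $\omega(B(t)\xi)$. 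I do not expect a substantive obstacle in this argument; the only points needing care are standard facts about the Lagrangian flow of a Lipschitz vector field --- namely the invertibility of $\Phi_t$ and the a.e.\ differentiability of $t \mapsto |\Phi_t(a) - \Phi_t(b)|$ --- all of which follow from Cauchy--Lipschitz since $u$ is Lipschitz in $x$.
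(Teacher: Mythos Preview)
Your argument is correct and is essentially the same as the paper's: both represent the solution via characteristics and use Gr\"onwall to control how trajectories separate, then feed that into the modulus of $\theta_0$. The only cosmetic difference is that the paper works directly with the backward flow $X(s)$ solving $\dot X(s)=u(X(s),t-s)$, $X(0)=x$ (so that $\theta(x,t)=\theta_0(X(t))$ and Gr\"onwall gives $|X(t)-Y(t)|\le B(t)|x-y|$ in one step), whereas you use the forward flow $\Phi_t$ and its inverse; these are equivalent, and your two-sided estimate in fact contains slightly more information than is needed.
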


\begin{proof}[Proof of Lemma~\ref{lemma:flow}]
The solution to the transport equation can be obtained by following the flow of the vector field backwards. Indeed, $\theta(x,t) = \theta_0(X(t))$ where $X$ solves the ordinary differential
equation
\begin{align*}
\dot X(s) = u(X(s),t-s), \quad X(0) = x.
\end{align*}
If $X(t)$ and $Y(t)$ are two such trajectories starting and $x$ and $y$ respectively, from Gr\"onwall's inequality
\begin{align*}
|X(t) - Y(t)| \leq \exp \left( \int_0^t  \| \nabla u (\cdot,s)\|_{L^\infty} \right) |x-y|= B(t) |x-y|.
\end{align*}
Therefore
\begin{align*}
|\theta(x,t)-\theta(y,t)| \leq |\theta_0(X(t))-\theta_0(Y(t))| \leq \omega(B(t)|x-y|)
\end{align*}
which concludes the proof of the lemma.
\end{proof}

\begin{proof}[Proof of Theorem~\ref{thm:P-Euler}]
Let us consider an initial data $\theta_0$ whose Lipschitz, $L^\infty$ and $L^2$ norm are bounded by an arbitrary constant $A$. Applying Lemma \ref{lemma:flow} with $\omega(\xi) = A \xi$, we
obtain that $\theta(\cdot,t)$ obeys the modulus of continuity $A B(t) \xi$, i.e. it is Lipschitz continuous with Lipschitz constant
\begin{align}
\| \nabla \theta(\cdot,t)\|_{L^\infty} \leq A \, B(t), \label{eq:theta:Lip:est}
\end{align}
where $B(0)=1$ and $\dot B = \| \nabla u(\cdot,t)\|_{L^\infty}  \, B(t)$.

By the maximum principle, $\|\theta(\cdot,t)\|_{L^\infty} \leq \|\theta_0\| \leq A $ for any time $t$. Moreover, since $u$ is divergence-free, $\|\theta(\cdot,t)\|_{L^2} \leq \|\theta_0\|_{L^2}
\leq A$ for any time $t$.  In order to combine the last two estimates, we have to estimate the Lipschitz norm of $u$ at time $t$. Let $\varphi(y)$ be a radially non-increasing non-negative
function that is constant $1$ on $|y|\leq 1/2$ and vanishes for $|y|\geq 1$. For some $r\in(0,1)$ to be chosen later, we split the integral defining $\nabla u$ into three pieces to estimate
\begin{align*}
|\nabla u(x)| =  \left| \int_{\RR^d} \nabla K(y) \theta(x+y) dy \right| \leq & \int_{\RR^d} |\nabla (\varphi(y/r) K(y))| \,  |\theta(x+y)-\theta(x)|  dy \\
& + \int_{\RR^d} |\nabla((1- \varphi(y/r))
\varphi(y)K(y))| \, |\theta(x+y)| dy \notag\\ & +\left| \int_{\RR^d} \nabla \big((1-\varphi(y))K(y) \big) \theta(x+y) dy \right|
\end{align*}
Using the bounds on $K$ and its derivatives obtained \eqref{eq:K:est}, and the fact that  $\theta$ is Lipschitz with  constant given by \eqref{eq:theta:Lip:est}, we may further bound
\begin{align}
|\nabla u(x)| &\leq C \int_{|y|\leq r} \frac{P(|y|^{-1})}{|y|^d} |\theta(x+y) - \theta(x)| dy +  C \int_{r/2 \leq |y| \leq 1} \frac{P(|y|^{-1})}{|y|^d} |\theta(x+y)| dy \notag\\ & \qquad \qquad
+ \int_{\RR^d} \left| (-\Delta) \nabla \big( (1-\varphi(y))  K(y) \big) \right| \, | (-\Delta)^{-1} \theta(x+y)| dy\\ &\leq C A B(t) \int_{0}^{r} P(\rho^{-1}) d\rho + C \|\theta_0\|_{L^\infty} \int_{r/2}^{1}
\frac{P(\rho^{-1})}{\rho} d\rho  \notag\\ & \qquad \qquad + C \| (-\Delta)^{-1} \theta\|_{L^\infty(\RR^d)} \int_{|y|\geq 1/2} \frac{P(|y|^{-1})}{|y|^{d+2}} dy \notag\\ &\leq C A B(t) r P(r^{-1})
+ C A P(r^{-1}) \ln \frac 2 r + C A P(2)  \label{eq:nabla:u:estimate}.
\end{align}
In the last inequality above we have additionally used two facts: first, that by \eqref{eq:u:rate} we have $\int_{0}^r P(\rho^{-1}) d\rho \leq C r P(r^{-1})$; and second, that since $\theta$ is
periodic and has zero mean on the torus, we can use the Sobolev inequality and estimate $\| (-\Delta)^{-1} \theta\|_{L^\infty(\RR^d)} = \| (-\Delta)^{-1} \theta\|_{L^\infty(\TT^d)} \leq C
\|\theta\|_{L^2(\TT^d)} \leq C A$. By choosing $r = B(t)^{-1}$ in \eqref{eq:nabla:u:estimate}, which is allowed since $B(0)=1$ and $\dot B \geq 0$, we arrive at
\begin{align*}
\|\nabla u(\cdot, t)\|_{L^\infty} &\leq C A \Big( 1 + P(B(t)) (1+\ln 2 B(t)) \Big).
\end{align*}
Finally we rewrite the differential equation for $B(t)$
\begin{align*}
\dot B(t) = \| \nabla u(\cdot,t) \|_{L^\infty} B(t) \leq CA \Big( 1 + P(B(t)) (1+\ln 2 B(t)) \Big) B(t).
\end{align*}
Clearly this ODE has a global in time solution {\em if and only if}
\begin{align*}
\int_1^\infty \frac 1 {r \ln (2r) P (r) }   dr = \infty
\end{align*}
holds,  which finishes the proof.
\end{proof}

\appendix

\section{Estimate on the dissipative operator at points of modulus breakdown} \label{app:D}

Here we prove Lemma~\ref{lemma:D}. Our argument parallels that of \cite{K2}, but is slightly simpler and more general, as we use the integral representation
of the diffusion generator $\EL$ instead of generalized Poisson kernels employed by \cite{K2}.

\begin{proof}[Proof of Lemma~\ref{lemma:D}]
Due to translation invariance and radial symmetry, we may assume without loss of generality that $x=( \tfrac \xi 2,0)$ and $y=(- \tfrac \xi 2,0)$. For a point $(\eta,\nu) \in \RR^2$ we write $
K(\eta,\nu)$ for the dissipation kernel corresponding to $\EL$. 
Then we have
\begin{align}
 \EL\theta(\tfrac \xi 2,0) - \EL\theta(-\tfrac \xi 2,0)
 & =  \int_{\RR} \int_{\RR} \big( \theta(\tfrac \xi 2,0) - \theta(-\tfrac \xi 2,0) - \theta(\tfrac \xi 2 + \eta,\nu) + \theta(-\tfrac \xi 2 + \eta,\nu) \big) K(\eta,\nu) d\eta d\nu.
 \label{eq:L:K:lower}
 \end{align}
Note that since $\theta$ obeys the modulus of continuity $\omega$, one may bound $ \EL\theta(\tfrac \xi 2,0) - \EL\theta(-\tfrac \xi 2,0)$ from below by the expression on the right side of
\eqref{eq:L:K:lower}, but instead of $K(\eta,\nu)$ with a lower bound for it, such as $m( \sqrt{\eta^{2} +\nu^{2}}) (\eta^2 + \nu^{2})^{-1}$ \eqref{eq:K:cond}. We will henceforth
write $K$ as a shortcut for the latter expression, and assume without loss of generality that $K$ is radially non-increasing and non-negative, since so is $m$ in \eqref{eq:K:cond}.
 \begin{align*}
 \EL\theta(\tfrac \xi 2,0) - \EL\theta(-\tfrac \xi 2,0) & =  \int_{\RR} \int_{\RR} \big( \omega(\xi) - \theta( \tfrac \xi 2 + \eta,\nu) + \theta(-\tfrac \xi 2 - \eta,\nu) \big) K(\eta,\nu) d\eta
 d\nu \\
 & =  \int_{\RR} \int_{- \xi /2}^{\infty} \big( \omega(\xi) - \theta( \tfrac \xi 2 + \eta,\nu) + \theta(-\tfrac \xi 2 - \eta,\nu) \big) K(\eta,\nu) d\eta d\nu \\
  & \qquad + \int_{\RR} \int_{- \xi /2}^{\infty} \big( \omega(\xi) - \theta(-\tfrac \xi 2 - \eta,\nu) + \theta(\tfrac \xi 2 + \eta,\nu) \big) K(-\xi - \eta,\nu) d\eta d\nu \\
  & = \int_{\RR} \int_{- \xi /2}^{\infty} \omega(\xi) \big( K(\eta,\nu) + K(-\xi - \eta,\nu) \big) \\
  &  \qquad \qquad - \big( \theta(\tfrac \xi 2 + \eta,\nu) - \theta(-\tfrac \xi 2 - \eta,\nu) \big) \big( K(\eta,\nu)- K(\xi + \eta,\nu) \big) d\eta d\nu \notag\\
  & = \int_{\RR} \int_{- \xi /2}^{\infty} \omega(\xi) \big( K(\eta,\nu) + K(-\xi - \eta,\nu) \big) \\
  &  \qquad \qquad - \omega(\xi+2\eta) \big( K(\eta,\nu)- K(\xi + \eta,\nu) \big) d\eta d\nu \notag\\
  & \quad +\int_{\RR} \int_{- \xi /2}^{\infty} \omega(\xi) \big( K(\eta,\nu) + K(-\xi - \eta,\nu) \big) \\
  &  \qquad \qquad + \big( \omega(\xi+2\eta) - \theta(\tfrac \xi 2 + \eta,\nu) + \theta(-\tfrac \xi 2 - \eta,\nu) \big) \big( K(\eta,\nu)- K(\xi + \eta,\nu) \big) d\eta d\nu \notag\\
  &=: T^{\parallel} + T^{\perp}.
\end{align*}
Note that $K(\eta,\nu) - K(\xi + \eta,\nu) \geq 0$ for $\eta \geq -\tfrac \xi 2$ due to the monotonicity of $K$ (or that of its lower bound). Hence, using that $\theta$ obeys the modulus of
continuity $\omega$, we see that $T^\perp \geq 0$. To obtain a useful lower bound for $T^\perp$, we only retain the singular piece centered about $\eta =0$. Changing variables $\eta +\xi/2
\mapsto \eta$ we have
\begin{align}
 T^\perp &= \int_{\RR}\int_{0}^{\infty} \big( \omega(2\eta) - \theta(\eta,\nu) + \theta(-\eta,\nu) \big) \big( K(\eta- \tfrac{\xi}{2},\nu) - K(\eta + \tfrac{\xi}{2},\nu) \big) d\eta d\nu.
 \label{eq:T:perp:1}
\end{align}
When $|\nu| \leq \xi/4$ and $|\eta - \xi/2| \leq \xi/4$, using that $m$ is radially non-increasing, we have that
\begin{align}
 K(\eta- \tfrac{\xi}{2},\nu) - K(\eta + \tfrac{\xi}{2},\nu) & = \frac{m\left(\sqrt{(\eta - \tfrac{\xi}{2})^2 + \nu^2}\right)}{(\eta - \tfrac{\xi}{2})^2 + \nu^2} - \frac{m\left(\sqrt{(\eta +
 \tfrac{\xi}{2})^2 + \nu^2}\right)}{(\eta + \tfrac{\xi}{2})^2 + \nu^2}\notag\\
 &\geq m\left(\sqrt{(\eta - \tfrac{\xi}{2})^2 + \nu^2}\right) \left( \frac{1}{(\eta - \tfrac{\xi}{2})^2 + \nu^2} - \frac{1}{(\eta + \tfrac{\xi}{2})^2 + \nu^2}  \right)\notag\\
 &\geq m\left(\sqrt{(\eta - \tfrac{\xi}{2})^2 + \nu^2}\right)\frac{1}{2 \left( (\eta - \tfrac{\xi}{2})^2 + \nu^2\right) } = \frac{ K(\eta- \tfrac{\xi}{2},\nu) }{2}. \label{eq:T:perp:2}
\end{align}
Inserting estimate \eqref{eq:T:perp:2} into expression \eqref{eq:T:perp:1}, and recalling that $\theta$ obeys $\omega$, we obtain
\begin{align*}
 T^\perp &\geq \frac{1}{2} \int_{-\xi/4}^{\xi/4} \int_{\xi/4}^{3\xi/4} \big( \omega(2\eta) - \theta(\eta,\nu) + \theta(-\eta,\nu) \big)  K(\eta- \tfrac{\xi}{2},\nu) d\eta d\nu \notag\\
 &= \frac{1}{2} \int_{0}^{\xi/4} \int_{\xi/4}^{3\xi/4} \big( 2 \omega(2\eta) - \theta(\eta,\nu) + \theta(-\eta,\nu) - \theta(\eta,-\nu) + \theta(-\eta,-\nu) \big)  K(\eta- \tfrac{\xi}{2},\nu)
 d\eta d\nu
 = \frac{\DD^\perp}{2}.
\end{align*}

On the other hand, the dissipation contribution from the direction parallel to $x-y$ may be rewritten as
\begin{align*}
T^\parallel & = \int_{\RR} \int_{- \xi /2}^{\infty} \omega(\xi) \big( K(\eta,\nu) + K(-\xi - \eta,\nu) \big) - \omega(\xi + 2\eta) \big( K(\eta,\nu)- K(\xi + \eta,\nu) \big) d\eta d\nu \\
 & = \int_\RR \int_{-\infty}^{-\xi/2} \big( \omega(\xi) + \omega(-\xi - 2\eta) \big) K(\eta,\nu) d\eta d\nu  + \int_\RR \int_{-\xi/2}^\infty\big( \omega(\xi) - \omega(\xi + 2\eta) \big)
 K(\eta,\nu) d\eta d\nu \\
 & = \int_{-\infty}^{-\xi/2} \big( \omega(\xi) + \omega(-\xi - 2\eta) \big) \tilde{K}(\eta) d\eta + \int_{-\xi/2}^\infty\big( \omega(\xi) - \omega(\xi + 2\eta) \big) \tilde{K}(\eta) d\eta \\
 & = \int_0^{\xi/2} \big( 2 \omega(\xi) - \omega(\xi + 2\eta) - \omega(\xi-2\eta) \big) \tilde{K}(\eta) d\eta +  \int_{\xi/2}^\infty \big( 2 \omega(\xi) - \omega(\xi + 2\eta) + \omega(2\eta-\xi)
 \big) \tilde{K}(\eta) d\eta
\end{align*}
where we have denoted
\begin{align*}
\tilde{K}(\eta) = \int_{\RR} K(\eta,\nu) d\nu.
\end{align*}
Since $\omega$ is concave, the proof of the lemma is concluded once we establish the existence of a positive constant $C$ such that
\begin{align*}
\tilde{K}(\eta) \geq \frac{C m(2 \eta)}{\eta}.
\end{align*}
for all $\eta>0$. But the above estimate is immediate since $m$ is non-increasing, and hence
\begin{align*}
 \int_{\RR} K(\eta,\nu) d\nu \geq \int_{-\eta}^{\eta} K(\eta,\nu) d\nu \geq C m(2 \eta) \int_{-\eta}^{\eta} \frac{d\nu}{\eta^2 + \nu^2} \geq \frac{C m(2 \eta)}{\eta},
\end{align*}
thereby concluding the proof of the lemma.
\end{proof}

\begin{remark}[\bf One-dimensional version]
It is clear that the above proof also holds in the one-dimensional case relevant for the Burgers equation. In fact this case is simpler since there is no need to introduce $\tilde{K}$.
\end{remark}

\noindent {\bf Acknowledgement.} \rm
MD was supported in part by research grant of University of Wisconsin-Madison Graduate School. AK acknowledges partial support of the NSF-DMS grant 1104415. LS was partially supported by NSF grants DMS-1001629, DMS-1065979 and the Sloan fellowship. VV was partially supported by an AMS-Simons travel award.

\end{document}